\pgfplotsset{compat=1.14}
 \newtheorem{theorem}{Theorem}[section]
\newtheorem{proposition}[theorem]{Proposition}
\newtheorem{remark}[theorem]{Remark}
\newtheorem{lemma}[theorem]{Lemma}
\newtheorem{example}[theorem]{Example}
\def\be{\begin{equation}}
\def\ee{\end{equation}}
\def\ben{\begin{displaymath}}
\def\een{\end{displaymath}}
\def\baa{\begin{eqnarray}}
\def\eaa{\end{eqnarray}}
\def\ba{\begin{aligned}}
\def\ea{\end{aligned}}
\renewcommand{\leq}{\leqslant}
\renewcommand{\geq}{\geqslant}
\newcommand{\ord}{\operatorname{ord}}
\begin{document}
\title{Triangulations of singular constant curvature spheres via Belyi functions and determinants of Laplacians}

\author{Victor Kalvin \footnote{ Department of Mathematics, Dawson College, 3040 Sherbrooke St. W, Montreal, Quebec H3Z 1A4, Canada;  { E-mail:  vkalvin@gmail.com}}}

\date{}
\maketitle

\begin{abstract} 
We study the zeta-regularized spectral determinant of the Friedrichs Laplacians on the singular spheres obtained by cutting and glueing copies of constant curvature (hyperbolic, spherical, or flat) double triangle. The determinant is explicitly expressed in terms of the corresponding Belyi functions and the determinant of the Friedrichs Laplacian on the double triangle. The latter determinant was found in a closed explicit form in  ArXiv:2112.02771~\cite{KalvinLast}. 
 In examples we consider the cyclic, dihedral, tetrahedral, octahedral, and icosahedral triangulations, and find the determinant for the corresponding spherical, Euclidean, and hyperbolic Platonic surfaces. These surfaces correspond to stationary points of the determinant. 
\end{abstract}

\section{Introduction}\label{Intro}
We study the zeta-regularized spectral determinant  of the Friedrichs scalar Laplacian on the surfaces constructed by cutting and gluing copies of a constant curvature  $S^2$-like double triangle. We call these surfaces glued or  triangulated. The geometric (combinatorial) cutting and gluing scheme is described in terms of the corresponding Belyi function~\cite{Belyi}. Or, equivalently, in terms of {\it dessins d'enfants}~\cite{Gr}. 

The Belyi function maps the (source) Riemann surface to the (target) Riemann sphere. Any constant curvature double triangle is isometric to the target Riemann sphere with  explicitly constructed conformal metric with three conical singularities~\cite{KalvinLast}. 
The glued surface is isometric to the source Riemann surface equipped with the pullback of the  explicit conformal metric by the Belyi function. This provides us with a geodesic triangulation and an
explicit uniformization of the glued surface.  In particular, in the case of flat right isosceles  $S^2$-like double triangle, we get the square-tiled flat surfaces, see e.g.~\cite{Sh,Zorich} and references therein.

 In other words,
for studying the spectral determinant we suggest using the natural decomposition via triangulation based on the celebrated results of Belyi, Grothendieck, Shabat and Voevodskii. 
The price for this is that we first need to study the spectral determinant on surfaces with conical singularities~\cite{KalvinLast,KalvinCCM,KalvinJFA,KalvinJGA}. 

The main idea is to explicitly express the spectral determinant of the glued surface in terms of the corresponding Belyi function and the spectral determinant of the constant curvature $S^2$-like double triangle.  The latter determinant was found in a closed explicit form in~\cite{KalvinLast}.

Let us recall that not all Riemann surfaces can be glued/triangulated in this manner. But for the most interesting surfaces, like the Fermat curve, the Bolza curve, the  Hurwitz surfaces, including Klein's quartic, the Platonic surfaces, etc., it is certainly possible, thanks to the famous Belyi theorem~\cite{Belyi,KW,ShVo}.  
Due to the highest possible number of authomorphisms~\cite{ShVo,harts}, some of the triangulated surfaces (equipped with the natural smooth hyperbolic metric) are expected to be stationary points of the spectral determinant. To the best of our knowledge, no closed explicit expression for the spectral determinant of any of these surfaces is known yet.

 In this paper, we study the spectral determinant of the genus zero triangulated surfaces. For the triangulations of singular constant curvature   (hyperbolic, spherical, or flat) spheres via Belyi functions, we explicitly express the spectral determinant in terms of the Belyi maps, and the determinant of the constant curvature $S^2$-like double triangle. In particular, with each bicolored plane tree~\cite{BZ,Bishop} we can naturally associate an infinite family of constant curvature singular spheres, and then the corresponding infinite family of explicit spectral invariants, that is the family of the corresponding spectral determinants. For some closely related geometric constructions and invariants see e.g.~\cite{HogRiv,Riv1,Riv2,Sprin,thurston}. 
 
  Let us stress that due to conical singularities on the cuts, none of the presently known BFK-type gluing formulae~\cite{BFK} can be used. We rely on a completely different approach relating the determinants of Laplacians on the target Riemann sphere and on the ramified covering via anomaly formulae for the determinants~\cite{KalvinLast,KalvinJFA}.

As a byproduct, our approach allows for explicit evaluation of the Liouville action~\cite{CMS,Z-Z,T-Z}, which can be of independent interest, cf.~\cite{P-T-T,ParkTeo,T-T}. It would be interesting to check if the explicit expressions can be reproduced by using conformal blocks~\cite{Z-Z}. 
  
  As is known~\cite{T-Z}, the Liouville action generates the famous accessory parameters  as their common antiderivative.  We explicitly express the accessory parameters of the triangulated singular constant curvature spheres  in terms of the Belyi functions and the orders of conical singularities. We also find the derivative of the Liouville action with respect to the order of a conical singularity in the spirit of~\cite{Z-Z}.   As one can expect, this allows us to show that the five Platonic solids are special also in the context of this paper: their surfaces correspond to stationary points of the determinant.

Recall that for smooth metrics on Riemann surfaces extrema of spectral determinants were studied in a series of papers by Osgood, Phillips, and Sarnak~\cite{OPS,OPS.5,OPS1}; see also~\cite{Kim} for an extension of their results, and~\cite{EM} for recent closely related results in the four-dimensional case.

We illustrate the main results of this paper with a number of examples: We consider the cyclic, dihedral, tetrahedral, octahedral, and icosahedral triangulations~\cite{Klein}, and find explicit expressions for the spectral determinant of the corresponding spherical, Euclidean, and hyperbolic Platonic surfaces.   In particular, we explicitly evaluate the determinants of the regular hyperbolic octahedron with vertices of angle $\pi$ (a double cover is the Bolza curve) and the regular hyperbolic icosahedron corresponding to the tessellation by $(2,3,7)$-triangles (related to Klein's quartic~\cite{SSW,KW,ShVo}); see Example~\ref{EOctahedron} and Example~\ref{EIcosahedron}.

 As the angles of the conical points of a Platonic surface go to zero (and the area remains fixed), the determinant grows without any bound, cf. Fig~\ref{Platonic}. In the limit, the conical points turn into cusps and we obtain an ideal polyhedron~\cite{Judge,Judge2,KimWilkin}. The spectrum of the corresponding Laplacians is no longer discrete.

 This paper is organized as follows.  Section~\ref{S2} contains preliminaries and the main results of the paper (Theorem~\ref{main}). Namely, after giving some preliminaries on constant curvature $S^2$-like double triangles and their spectral determinants in Subsection~\ref{SS2.1}, we describe the geometric setting of this paper and formulate the main results in Theorem~\ref{main} of Subsection~\ref{SS2.2}.
 
  Section~\ref{PFmain} is entirely devoted to the proof of  Theorem~\ref{main}: In Subsection~\ref{SS3.1.} we prove Proposition~\ref{PBdet}, which is a preliminary version of Theorem~\ref{main}.  In Subsection~\ref{BFET} we refine the result of  Proposition~\ref{PBdet} by using the natural Euclidean equilateral triangulation~\cite{ShVo,VoSh}. This completes the proof of Theorem~\ref{main}.
 
 Section~\ref{Unif} is devoted to the uniformization, the accessory parameters, the Liouville action, and stationary points of the spectral determinant.
 
Finally, Section~\ref{ExAppl} contains illustrating examples and applications.  

\section{Preliminaries and main results}\label{S2}

\subsection{Double triangles}\label{PrelimTriangulation}\label{SS2.1}
Consider the involutional tessellation of the standard round sphere $x_1^2+x_2^2+x_3^2=1$ in $\Bbb R^3$ by two congruent $(1,1,1)$-triangles; as usual, by a $(k,\ell,m)$-triangle we mean a geodesic triangle (spherical, Euclidean, or hyperbolic) with internal angles $\pi/k$, $\pi/\ell$, and $\pi/m$. In particular,  the $(1,1,1)$-triangles are hemispheres. Let the standard sphere in $\Bbb R^3$   be identified with the Riemann sphere $\overline{\Bbb C}_z=\Bbb C\cup\infty$ by means of the stereographic projection. Let the $(1,1,1)$-triangles correspond to the upper $\Im z\geq 0$ and lower  $\Im z\leq 0$ half-planes. Without loss of generality, we can assume that three marked points on the sphere (the vertices of the congruent $(1,1,1)$-triangles) have the coordinates  $z=0$, $z=1$, and $z=\infty$. 
 
 Let us endow the Riemann sphere with a unique unit area constant curvature metric having conical singularities of order $\beta_0$, $\beta_1$, and $\beta_\infty$ at the marked points $z=0$, $z=1$, and $z=\infty$ correspondingly. For simplicity, we assume that the orders $\beta_j$ are in the interval $(-1,0]$, or, equivalently, that the cone angles $2\pi(\beta_j+1)$ are positive and do not exceed $2\pi$; here $j\in\{0,1,\infty\}$.   We denote the metric by $m_{\pmb\beta}$ and say  that it  represents the divisor 
 $$
 \pmb\beta=\beta_0\cdot 0+\beta_1\cdot 1+\beta_\infty\cdot \infty
 $$
of degree $|\pmb \beta|=\beta_0+\beta_1+\beta_\infty$. 

 The metric $m_{\pmb\beta}$ can be explicitly constructed as the pullback of the Gaussian curvature $2\pi(|\pmb\beta|+2)$ model metric
 \be\label{mm}
 \frac{4|dw|^2}{(1+2\pi(|\pmb\beta|+2)|w|^2)^2}
 \ee
 by an appropriately normalized Schwarz triangle function $z\mapsto w=w_{\pmb\beta}(z)$, see~\cite[Appendix]{KalvinLast}.   Note that the conditions $\beta_j -|\pmb \beta|/2>0$ are necessary and sufficient for the existence of the metric. 
 
The Schwarz triangle function $w_{\pmb\beta}$ maps the upper half-plane $\Im z>0$ into a geodesic  (light-coloured) triangle with internal angle $\pi(\beta_0+1)$ at the vertex $w_{\pmb\beta}(0)=0$, $\pi(\beta_1+1)$ at the vertex $w_{\pmb\beta}(1)\in\Bbb R$, and $\pi(\beta_\infty+1)$ at the vertex $w_{\pmb\beta}(i\infty)$. The analytic continuation of $w_{\pmb\beta}$ (from the upper half-plane $\Im z>0$ through the interval $(0,1)$ of the real axis) maps the lower half-plane $\Im  z<0$ into the (dark-coloured) reflection of the geodesic triangle in the side joining the points $w_{\pmb\beta}(0)$ and $w_{\pmb\beta}(1)$. Thus we obtain a geodesic with respect to the model metric~\eqref{mm} bicolored double triangle, which is hyperbolic for $|\pmb\beta|<-2$  (cf. Fig.~\ref{TemplateHyp}), spherical for  $|\pmb\beta|>-2$ (cf. Fig.~\ref{TemplateSph}), and Euclidean for  $|\pmb\beta|=-2$ (cf. Fig.~\ref{TemplateEuc}).
 \begin{figure}[h]
\centering\begin{tikzpicture}[scale=1.25]
\draw[gray!50,fill=gray!50!blue!30]  (6.75,0)--(0,0) -- (5,2) arc (-165:-112:3cm);
\draw[gray!50,fill=gray!80!blue!80]  (0,0) -- (5,-2) arc (165:112:3cm);
\draw[black,solid,thick] (6.75,0)node[anchor=west]{$w_{\pmb \beta}(1)$}--(0,0)node[anchor=north east]{$w_{\pmb \beta}(0)$} -- (5,2)node[anchor=south]{$w_{\pmb \beta}( i \infty)$} arc (-165:-112:3cm) ; 
\draw[black,solid,thick]  (0,0) -- (5,-2)node[anchor=north]{$w_{\pmb \beta}(-i \infty)$} arc (165:112:3cm); 

\filldraw[black] (0,0) circle (1pt);\filldraw[black] (6.75,0) circle (1pt);\filldraw[black] (5,2) circle (1pt);\filldraw[black] (5,-2) circle (1pt);

\draw[black] (1.5,0) arc (0:33:1cm);
\draw[black] (2.2,0)node[anchor=south]{$\pi(\beta_0+1)$};

\draw[black] (5.2,1.48) arc (-73:-146:.7cm);
\draw[black] (4.5,1.48)node[anchor=north]{$\pi(\beta_\infty+1)$};

\draw[black] (5.5,0) arc (180:138:1cm);
\draw[black] (4.8,0)node[anchor=south]{$\pi(\beta_1+1)$};
\end{tikzpicture}
\caption{Hyperbolic double triangle.}
\label{TemplateHyp}
\end{figure}
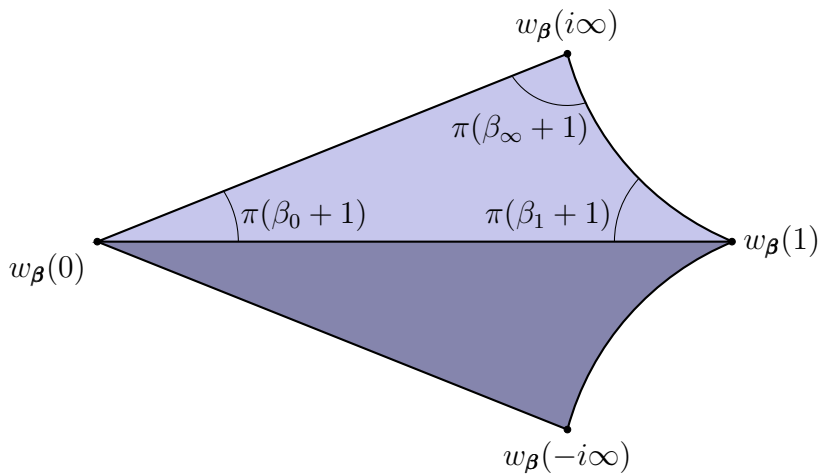

\begin{figure}[h]
\centering\begin{tikzpicture}[scale=1.25]
\draw[gray!50,fill=gray!60!blue!30]  (4.98,2.01)--(0,0) --(6.75,0)arc (180-165:180-112:3cm) ;
\draw[gray!50,fill=gray!80!blue!80]  (4.98,-2.01)--(0,0) --(6.75,0)arc (-180+165:-180+112:3cm) ;
\draw[black,solid,thick] (4.98,2.01)node[anchor=south]{$w_{\pmb \beta}(i \infty)$}--(0,0)node[anchor=north east]{$w_{\pmb \beta}(0)$} --(6.75,0) node[anchor=west]{$w_{\pmb \beta}(1)$}arc (180-165:180-112:3cm)   ; 
\draw[black,solid,thick]  (4.98,-2.01)node[anchor=north]{$w_{\pmb \beta}(-i \infty)$}--(0,0) --(6.75,0)arc (-180+165:-180+112:3cm);

\filldraw[black] (0,0) circle (1.5pt);\filldraw[black] (6.75,0) circle (1.5pt);\filldraw[black] (5,2) circle (1.5pt);\filldraw[black] (5,-2) circle (1.5pt);

\draw[black] (1.5,0) arc (0:33:1cm);
\draw[black] (2.2,0)node[anchor=south]{$\pi(\beta_0+1)$};

\draw[black] (5.55,1.7) arc (-50:-135:.9cm);
\draw[black] (4.9,1.55)node[anchor=north]{$\pi(\beta_\infty+1)$};

\draw[black] (5.9,0) arc (180:122:1cm);
\draw[black] (5.2,0)node[anchor=south]{$\pi(\beta_1+1)$};
\end{tikzpicture}
\caption{Spherical double triangle.}
\label{TemplateSph}
\end{figure}
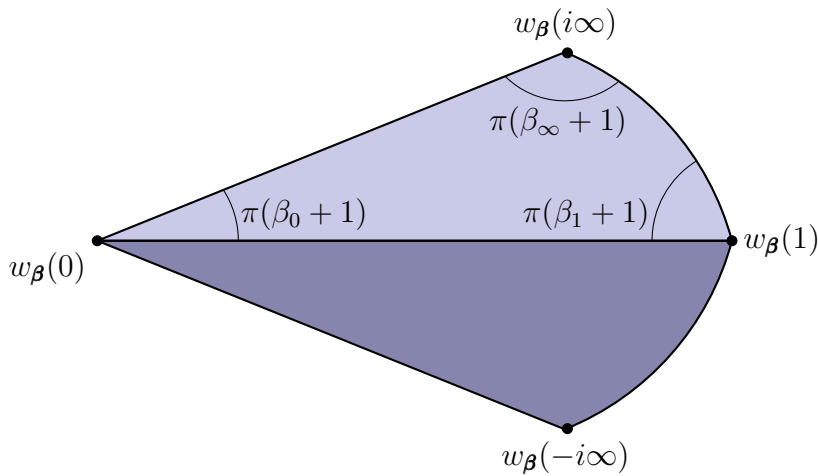

\begin{figure}[h]
\centering\begin{tikzpicture}[scale=1.25]
\draw[gray!50,fill=gray!60!blue!30] (0,0) -- (5,2)--(7,0)--(0,0);
\draw[gray!50,fill=gray!80!blue!80] (0,0) -- (7 ,0) -- (5,-2)--(0,0); 
\draw[black,solid,thick] (0,0)node[anchor=north east]{$w_{\pmb \beta}(0)$} -- (5,2)node[anchor=south]{$w_{\pmb \beta}(i \infty)$}--(7,0)node[anchor=west]{$w_{\pmb \beta}(1)$}--(0,0); 
\draw[black,solid, thick]  (7 ,0) -- (5,-2)node[anchor=north]{$w_{\pmb \beta}(-i \infty)$}--(0,0); 

\filldraw[black] (0,0) circle (1.5pt);\filldraw[black] (7,0) circle (1.5pt);\filldraw[black] (5,2) circle (1.5pt);\filldraw[black] (5,-2) circle (1.5pt);

\draw[black] (1.5,0) arc (0:33:1cm);
\draw[black] (2.2,0)node[anchor=south]{$\pi(\beta_0+1)$};

\draw[black] (5.5,1.5) arc (-60:-138:1cm);
\draw[black] (4.9,1.4)node[anchor=north]{$\pi(\beta_\infty+1)$};

\draw[black] (6,0) arc (180:135:1cm);
\draw[black] (5.3,0)node[anchor=south]{$\pi(\beta_1+1)$};
\end{tikzpicture}
\caption{Euclidean  double  triangle.} 
\label{TemplateEuc}
\end{figure}
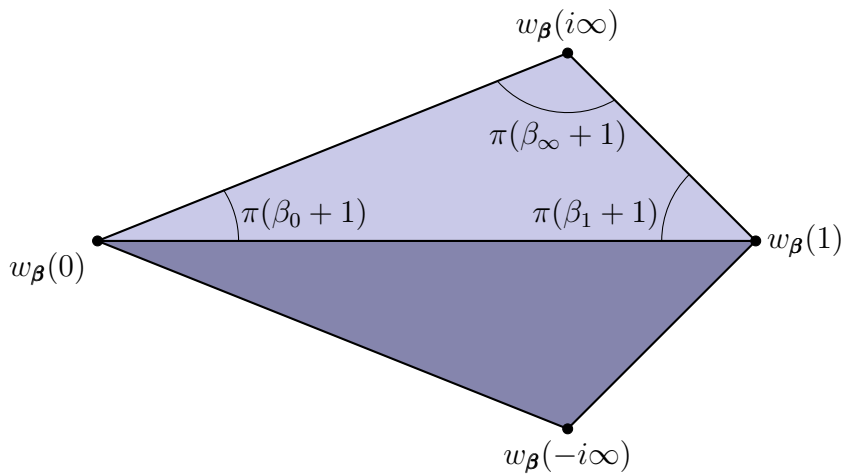

 To make it $S^2$-like, the double triangle is folded along the interval $[w_{\pmb\beta}(0), w_{\pmb\beta}(1)]\subset \Bbb R$, and then the corresponding sides of the light- and dark-coloured triangles are glued pairwise. Namely, the side joining $w_{\pmb\beta}(0)$ and $w_{\pmb\beta}(i\infty)$  is glued to the side joining $w_{\pmb\beta}(0)$ and $w_{\pmb\beta}(-i\infty)$, and then the side joining  $w_{\pmb\beta}(1)$ and $w_{\pmb\beta}(i\infty)$  is glued to the side joining $w_{\pmb\beta}(1)$ and $w_{\pmb\beta}(-i\infty)$. The resulting $S^2$-like bicolored double triangle is isometric to the genus zero constant curvature surface $(\overline {\Bbb C}_z, m_{\pmb\beta})$ with three conical singularities. The isometry is given by the Schwarz triangle function $w_{\pmb\beta}$.

 The potential $\phi$ of the conformal metric $m_{\pmb\beta}=e^{2\phi}|dz|^2$  satisfies the Liouville equation
\be\label{Liouv}
e^{-2\phi}(-4\partial_z\partial_{\bar z}\phi)={2\pi(|\pmb\beta|+2)} ,\quad z\in\Bbb C\setminus\{0,1\},
\ee
and obeys the asymptotics 
 \be\label{asympt_phi}
 \ba
 \phi(z)&=\beta_0\log|z|+\phi_0+o(1), \quad z\to 0,\quad  \phi_0=\Psi(\beta_0,\beta_1,\beta_\infty),   
 \\
 \phi(z)&=\beta_1\log|z-1|+\phi_1+o(1), \quad z\to 1,\quad  \phi_1=\Psi(\beta_1,\beta_0,\beta_\infty),    
 \\
 \phi(z)&=-(\beta_\infty+2)\log|z|+\phi_\infty+ o(1), \quad z\to\infty,\quad \phi_\infty= \Psi(\beta_\infty,\beta_1,\beta_0).
 \ea
 \ee
Here  $\Psi(\beta_1,\beta_2,\beta_3)=\Phi(\beta_1,\beta_2,\beta_3)+ \log 2$ with explicit function $\Phi$ from~\cite[Prop. A.2)]{KalvinLast}.

The Liouville equation indicates that outside of the marked points the Gaussian curvature of the metric $m_{\pmb\beta}$  is $2\pi(|\pmb\beta|+2)$, while the asymptotics~\eqref{asympt_phi} indicate that at the marked points $z=0$,$z=1$, and $z=\infty$ the metric has conical singularities of order $\beta_0$, $\beta_1$, and $\beta_\infty$ correspondingly~\cite{Troyanov}.

On the singular constant curvature surface $(\overline{\Bbb C}_z, m_{\pmb\beta})$ we consider the Laplace-Beltrami operator $\Delta_{\pmb\beta}=-e^{-2\phi}4\partial_z\partial_{\bar z}$ as an unbounded operator in the usual $L^2$-space. The operator is initially defined on the smooth functions supported outside of the conical singularities and not essentially selfadjoint. We take the Friedrichs selfadjoint extension,  which we still denote by $\Delta_{\pmb\beta}$ and call the Friedrichs Laplacian or simply Laplacian for short.  The spectrum of $\Delta_{\pmb\beta}$ consists of non-negative isolated eigenvalues of finite multiplicity, and the zeta-regularized spectral determinant  $\det \Delta_{\pmb\beta}$ of $\Delta_{\pmb\beta}$ can be introduced in the standard well-known way.

 In what follows it is important that the spectral zeta-regularized determinant of the Friedrichs Laplacian $\Delta_{\pmb\beta}$ on the constant curvature surface $(\overline{\Bbb C}_z, m_{\pmb\beta})$  with three conical singularities, or, equivalently, on the isometric $S^2$-like bicolored double triangle,  is an explicit function
\be\label{CalcVar}
(-1,0]^3\ni(\beta_0,\beta_1,\beta_\infty)\mapsto \det \Delta_{\pmb\beta}
\ee
 found in~\cite[Corollary~1.3]{KalvinLast}.

\subsection{Triangulations and determinants of Laplacians}\label{SS2.2}
Recall that a (non-constant) meromorphic function $f: X\to \overline{\Bbb C}_z$ on a compact Riemann surface $X$ is called a Belyi function, if it is ramified at most above three points~\cite{Belyi}. Any Belyi function can alternatively be described via the corresponding {\it dessin d'enfant},  which is usually defined as the graph formed  on $X$ by the preimages $f^{-1}([0,1])$ of the real line segment $[0,1]$ with black points placed at the preimages $f^{-1}(0)$ of zero and white points placed at the preimages $f^{-1}(1)$ of $1$,  e.g.~\cite{CIW,Gr,LZ,Sch,Wolfart}. Any meromorphic function on the Riemann sphere is a rational function. If, in addition, $f$ has only a single pole that is at infinity, then $f$ is a polynomial~\cite{BZ,Bishop,LZ}.

 Consider a  Belyi function 
$f: \overline {\Bbb C}_x\to  \overline {\Bbb C}_z$  ramified at most above the  marked points $z=0$, $z=1$, and $z=\infty$.  This defines a ramified (branched) covering and a bicolored triangulation of the Riemann sphere $\overline {\Bbb C}_x$, e.g.~\cite{VoSh,LZ}.  Namely, the function $f$ maps:  {\it i}) the sides of the bicolored triangles on $\overline {\Bbb C}_x$ to the line segments $(-\infty,0)$, $(0,1)$, and $(1,\infty)$ of the real axis; {\it ii}) the vertices of the triangles  to the marked points $0$, $1$, and  $\infty$; {\it iii}) the light-colored triangles to the upper half-plane $\Im z>0$, and the dark-colored triangles  to the lower half-plane $\Im z <0$.  The  number of bicolored double triangles on $ \overline {\Bbb C}_x$ is exactly the degree $\deg f=\max\{\deg P,\deg Q\}$ of $f$,  where $f(x)=P(x)/Q(x)$ with coprime polynomials  $P$ and $Q$. 

For example, the Belyi function 
\be\label{BFIcos}
f(x)=1728 \frac{x^5(x^{10}-11 x^5-1)^5}  {(x^{20}+228(x^{15}-x^{5})+494 x^{10}+1)^3}, \quad \deg f =60,
\ee
 defines the icosahedral triangulation~\cite{Klein1}, which corresponds to the tessellation of  the standard round sphere with bicolored spherical $(2,3,5)$-triangles in Fig.~\ref{Icos Triang Pic}, Fig.~\ref{TemplateSph}. As usual, we identify the Riemann sphere $\overline {\Bbb C}_x$ with the standard round sphere in $\Bbb R^3$ by means of  the stereographic projection.

\begin{figure}
 \centering\includegraphics[scale=.3]{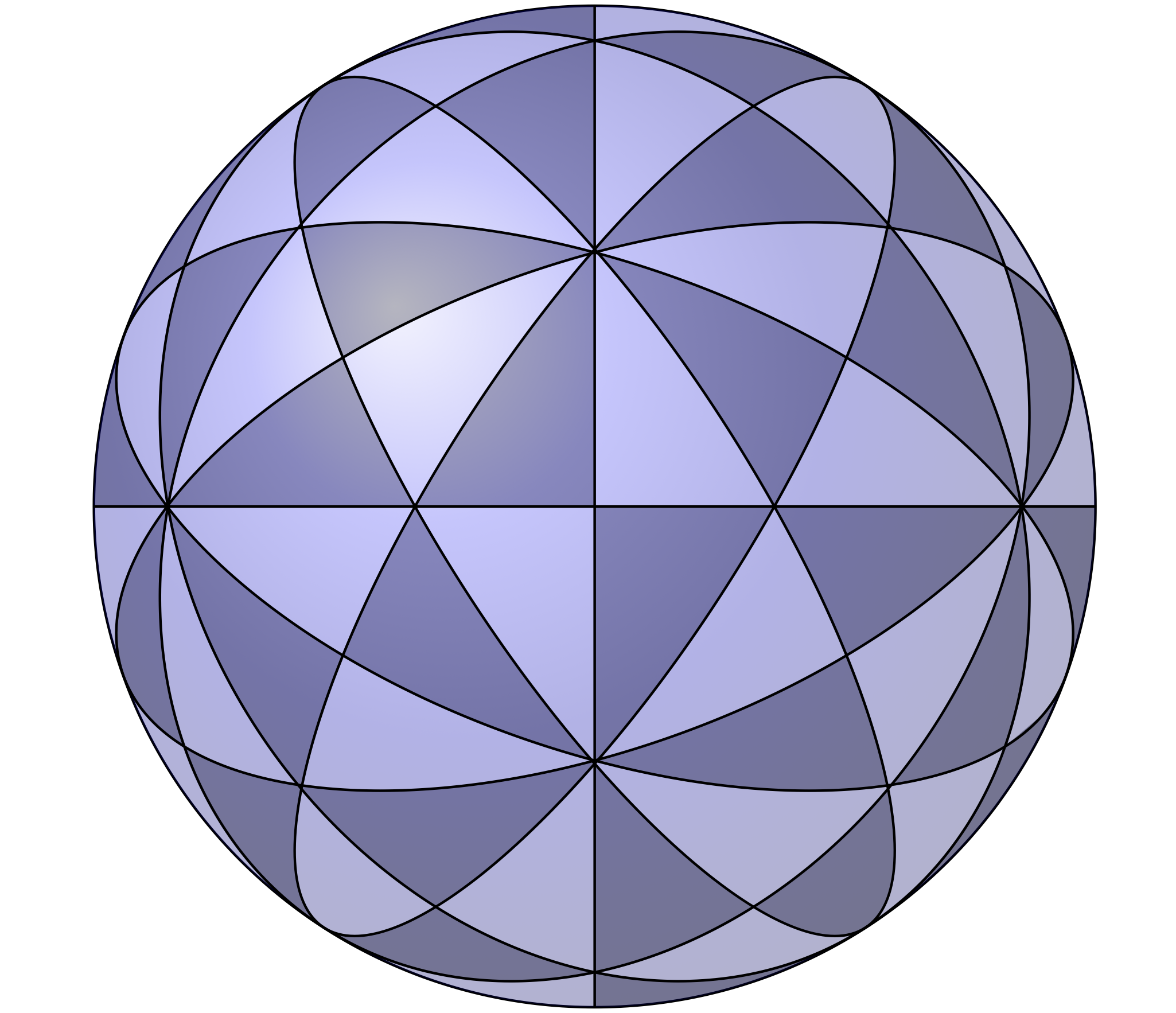}
\caption{Icosahedral triangulation}\label{Icos Triang Pic}
\end{figure}

Depending on the sign of $|\pmb\beta|+2$, the composition $w_{\pmb\beta}\circ f $ (of the Schwarz triangle function $w_{\pmb\beta}$ with a  Belyi function $f: \overline {\Bbb C}_x\to  \overline {\Bbb C}_z$) maps the light-coloured (resp. the dark-coloured) triangles defined by $f$ on $\overline {\Bbb C}_x$ to the light-coloured (resp. the dark-coloured) hyperbolic, spherical, or Euclidean triangle in  Fig.~\ref{TemplateHyp}, Fig.~\ref{TemplateSph}, and Fig.~\ref{TemplateEuc}.  The pullback of the model metric~\eqref{mm} by this composite function, or, equivalently, the pullback $f^*m_{\pmb\beta}$ of the metric $m_{\pmb \beta}$ by $f$,  is a singular metric  of Gaussian curvature $2\pi(|\pmb\beta|+2)$ on $\overline{\Bbb C}_x$. The triangulated surface $(\overline{\Bbb C}_x, f^*m_{\pmb\beta})$ is isometric to the one obtained by cutting and gluing $\deg f $ copies of the $S^2$-like bicolored double triangle in accordance with a  combinatorial scheme prescribed by $f$.  In particular, Fuchsian triangle groups are included into consideration~\cite{CIW}. 

Let us note that the above construction also works in the opposite (constructive) direction:  the combinatorial cutting and gluing schemes of bicolored double triangles can be described with the help of a (fixed) base star in $\overline{\Bbb C}_z$ with the terminal vertices at   $z=0,1,\infty$ and the constellations. For any constellation there exists a Belyi function defining the cutting and gluing scheme, e.g.~\cite{LZ}.

 For instance, one can preserve the gluing scheme of bicolored double triangles in Fig.~\ref{Icos Triang Pic}, and replace each light-coloured (resp. dark-coloured) spherical  $(2,3,5)$-triangle with the image of the upper (resp. lower) complex half-plane under the Schwarz triangle function  $w_{\pmb\beta}$ in  Fig.~\ref{TemplateHyp}, Fig.~\ref{TemplateSph}, or Fig.~\ref{TemplateEuc}. Then, as a result, we obtain a hyperbolic, Euclidean, or spherical singular surface isometric to $(\overline{\Bbb C}_x, f^*m_{\pmb\beta})$, where $f$ is the Belyi function~\eqref{BFIcos}. However, in general, it is not easy to find a Belyi function corresponding to a particular gluing scheme, even though for the regular dihedra and the surfaces of five Platonic solids they were first found by Schwarz and Klein~\cite{Klein}, see also~\cite{LZ,Margot Zvonkin}.

The main result of this paper is a closed explicit formula for the zeta-regularized spectral determinant of the Friedrichs Laplacian on the triangulated singular constant curvature spheres $(\overline{\Bbb C}_x, f^*m_{\pmb\beta})$. We express the determinant in terms of the Belyi function $f$ and the determinant~\eqref{CalcVar} of the corresponding $S^2$-like bicolored double triangle. Recall that the latter determinant is explicitly evaluated in~\cite{KalvinLast}.

Let us list the preimages of the marked points $\{0,1,\infty\}\subset\overline{\Bbb C}_z$ under $f$ as the marked points  $x_1,x_2,\dots,x_n$ on the Riemann sphere $\overline{\Bbb C}_x$; here $n=\deg f +2$. We shall always assume that $x_k=\infty$ for some $k\leq n$: this can always be achieved by replacing $f$ with equivalent Belyi function $f\circ \mu$, where $\mu$ is a  M\"obius transformation satisfying $\mu(\infty)=x_k$. The surfaces $(\overline{\Bbb C}_x, f^*m_{\pmb\beta})$ and $(\overline{\Bbb C}_x, (f\circ\mu)^*m_{\pmb\beta})$ are isometric, and, as a consequence, the corresponding spectral determinants are equal. 

Introduce the ramification divisor $$
\pmb f :=\sum_{k=1}^n \ord _k f \cdot x_k,
$$ 
where $ \ord _k f$ is the ramification order of the Belyi function $f$  at $x_k$.  
If $x_k$ is a pole of $f$, then its multiplicity coincides with $ \ord _k f +1$.  If  $f'(x_k)=0$, then $\ord_k f+1$ is  the order of zero at $x=x_k$ of the function  $x\mapsto f(x)- f(x_k)$.  If $x_k$ is not a pole of $f$ and $f'(x_k)\neq 0$, then the  ramification order  $ \ord _k f$  is zero.  One can also interpret $x_k$ as a vertex in the {\it dessin d'enfant} corresponding to $f$, then the number $ \ord _k f +1$ is its graph-theoretic degree, or, equivalently, the number of edges emanating from the vertex $x_k$.  
By the Riemann-Hurwitz formula  the degree $|\pmb f|:=\sum_{k=1}^n \ord _k f$ of the divisor $\pmb f$ satisfies $|\pmb f|=2\deg f-2$.
  
From the Liouville equation~\eqref{Liouv} and the asymptotics~\eqref{asympt_phi} it follows that  
 the potential 
$$
f^*\phi:=\phi\circ f+\log |f'|
$$
of the pullback metric  $ f^*m_{\pmb\beta}=e^{2 f^*\phi}|dx|^2$   satisfies the Liouville equation
\be\label{LUx}
 e^{-2f^*\phi}\bigl(-4\partial_x\partial_{\bar x} (f^*\phi)\bigr)= {2\pi(|\beta|+2)},\quad x\in \Bbb C\setminus \{x_1,\dots, x_n\},
\ee
and obeys the asymptotics
\be\label{asfphi}
\ba
(f^*\phi)(x)&=(f^*\beta)_k\log |x-x_k|+O(1), \quad x\to x_k\neq \infty,\\
(f^*\phi)(x)&=-((f^*\beta)_k+2)\log |x|+O(1),\quad x \to x_k=\infty,
\ea
\ee
where
\be\label{orders*}
 \left(f^*\pmb\beta\right)_k:=(\ord_k f+1)(\beta_{f(x_k)}+1)      -1.
\ee

Geometrically this means that outside of the points $x_1,\dots,x_k$ the metric $ f^*m_{\pmb\beta}$ is a regular metric of constant Gaussian curvature $2\pi(|\beta|+2)$, while at each point $x_k$ the metric has a conical singularity of order $(f^*\beta)_k$, or, equivalently, of angle $2\pi((f^*\beta)_k+1)$, see e.g.~\cite{Troyanov}.  That is because $\ord_k f+1$ vertices of bicolored double triangles meet together at $x_k$ to form the conical singularity. Each of the vertices makes a contribution of angle $2\pi(\beta_{f(x_k)}+1)$ into the angle of the conical singularity, cf.~\eqref{orders*}. Note that it could be so that for a point $x_k$ we have $(f^*\beta)_k=0$ (for example,  this is the case if $\ord_k f=1$ and $\beta_{f(x_k)}=-1/2$), then $x_k$ is also a regular point of the metric $ f^*m_{\pmb\beta}$. Clearly, $\beta_{f(x_k)}=\beta_\infty$ if $x_k$ is a pole of $f$, $\beta_{f(x_k)}=\beta_0$ if $x_k$ is a zero of $f$, and $\beta_{f(x_k)}=\beta_1$ if $f(x_k)=1$, where $\beta_j$ is the same as in~\eqref{asympt_phi}.

Now we are in a position to formulate the main results of this paper. 
\begin{theorem}[Spectral determinant of triangulated spheres]\label{main}
Let $\det \Delta_{\pmb\beta}$ stand for the explicit function~\eqref{CalcVar}, whose value at a point is the zeta-regularized spectral determinants of  the Friedrichs selfadjoint extension of the Laplace-Beltrami operator on the unit area $S^2$-like double triangle, or, equivalently, on  $(\overline {\Bbb C}_z,m_{\pmb\beta})$; see Sec.~\ref{PrelimTriangulation} and~\cite{KalvinLast}.

 Let $f: \overline {\Bbb C}_x \to \overline {\Bbb C}_z$ be a Belyi function unramified outside of the set $\{0,1,\infty\}$ and  such that $f(\infty)\in\{0,1,\infty\}$.  Denote by  $\ord_k f$  the ramification order of $f$ at $x_k$, where  $x_1,x_2,\dots,x_n\in \overline {\Bbb C}_x$ are the preimages of the points $0$, $1$, and $\infty$ under $f$.

  Consider the surface $(\overline {\Bbb C}_x, f^*m_{\pmb\beta})$ isometric to the one glued from $\deg f$ copies of the double triangle in accordance with a pattern defined by  $f$.  
Then for the zeta-regularized spectral determinant  $\det\Delta_{f^*\pmb\beta}$ of the Friedrichs selfadjoint extension of the Laplace-Beltrami operator on  $(\overline {\Bbb C}_x, f^*m_{\pmb\beta})$ we have
\be\label{S20STMNT}
\ba
\log\frac {\det\Delta_{f^*\pmb\beta}}{\deg f} 
 =& \deg f \cdot\log{\det\Delta_{\pmb\beta}} +\frac 1 6 \sum_{k} \left(\ord_k f+1-\frac 1 {\ord_k f+1}\right)\frac {\phi_{f(x_k)}}{\beta_{f(x_k)}+1}   
   \\
      & -\frac 1 6 \sum_{k}  \left( (f^*\pmb\beta)_k+1+\frac 1 { (f^*\pmb\beta)_k+1}  \right)\log (\ord_k f+1)
 \\
&  - \sum_{k}\Bigl(\mathcal C\left(\left(f^*\pmb\beta\right)_k\right)
  -(\ord_k f+1) \mathcal C(\beta_{f(x_k)})\Bigr) -  (\deg f-1){\bf C} +C_f,
  \ea
  \ee
 where $\phi_j$ and $\beta_j$ with $j\in \{0,1,\infty\}$ is the (explicit) uniformization data in~\eqref{asympt_phi}, and $(f^*\pmb\beta)_k$ is the same as in~\eqref{orders*}. The real-analytic function $(-\infty,0]\ni\beta\mapsto\mathcal C(\beta)$ is  defined by the equality
  \be\label{cbeta}
 \mathcal C(\beta)=2\zeta'_B(0;\beta+1,1,1)-2\zeta_R'(-1)-\frac {\beta^2}{6(\beta+1)}\log 2 -\frac \beta {12}+\frac 1 2 \log(\beta+1),
\ee
where $\zeta'_B$ and $\zeta_R'$ stand for the derivatives with respect to $s$ of the Barnes double zeta function $\zeta_B(s;\beta+1,1,1)$ and the Riemann zeta function $\zeta_R(s)$ respectively. 
For the constant ${\bf C}$ in~\eqref{S20STMNT} we have 
\be\label{bfC}
   {\bf C}=\frac 1 {6}-\frac 4 3 \log 2 -4\zeta_R'(-1) -\log\pi.
\ee 
Finally, the constant $C_f$ in~\eqref{S20STMNT} depends only on the Belyi function $f$. It is given by 
\be\label{C_F}
\ba
C_f & =\frac 1 {18} \sum_{ k:x_k\neq \infty}\sum_{\ \ \ell:x_k\neq x_\ell\neq \infty}\frac{(\ord_k f-2)(\ord_\ell f -2)}{\ord_k f+1} \log|x_k-x_\ell|
\\
&\qquad\qquad +\frac 1 6 \sum_{k}  \left( \frac{\ord_k f +1} 3 +\frac 3 { \ord_k f+1}  \right)\log (\ord_k f+1)
 \\
&\qquad\qquad +\frac 1 {6} \left( \deg f - \sum_{k}  \frac 3 { \ord_k  f+1}    \right)\log A_f,
  \ea
\ee
where 
\be\label{Exp_A}
A_f=\frac{|f(x)|^{-2/3}|f(x)-1|^{-2/3}|f'(x)|}{\prod_{k: x_k\neq\infty}|x-x_k|^{\frac 1 3 (\ord_k f-2)}}.
\ee
Note that $A_f$ is a scaling coefficient that does not depend on $x$ and can be easily evaluated for any particular Belyi function $f$. 
\end{theorem}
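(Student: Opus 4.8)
The plan is to compare $\det\Delta_{f^*\pmb\beta}$ with $\det\Delta_{\pmb\beta}$ by inserting explicitly computable flat model-cone metrics on both $\overline{\Bbb C}_x$ and $\overline{\Bbb C}_z$, and then to exploit that $f$ is a local isometry of $(\overline{\Bbb C}_x,f^*m_{\pmb\beta})$ onto $(\overline{\Bbb C}_z,m_{\pmb\beta})$ away from its ramification points. Since the curvature $2\pi(|\pmb\beta|+2)$ is the same upstairs and downstairs and $f^*\phi=\phi\circ f+\log|f'|$, the branched covering $f$ reduces most of the conformal (Liouville) data to $\deg f$ copies of the corresponding data on the base, leaving genuinely new contributions only at the ramification points.

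First I would choose on $\overline{\Bbb C}_x$ a flat metric $e^{2\psi}|dx|^2$ with conical singularities of the same orders $(f^*\pmb\beta)_k$ at the same points $x_k$, and an analogous flat conical reference on $\overline{\Bbb C}_z$ adapted to $\pmb\beta$. Applying the Polyakov--Alvarez-type conformal anomaly formula for Friedrichs Laplacians on surfaces with conical singularities from \cite{KalvinJFA,KalvinLast} to each surface gives
\[
\log\det\Delta_{f^*\pmb\beta}=\log\det\Delta_{\mathrm{flat}}-\frac1{12\pi}\int_{\overline{\Bbb C}_x}\bigl(|\nabla\sigma|^2+2K_{\mathrm{flat}}\sigma\bigr)\,dA_{\mathrm{flat}}+(\text{cone and area terms}),
\]
with $\sigma=f^*\phi-\psi$, and the parallel identity for $\det\Delta_{\pmb\beta}$. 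The determinants of the flat conical references are known in closed form through Barnes double zeta values, and this is exactly where the function $\mathcal C$ of \eqref{cbeta} originates: each cone of order $\gamma$ contributes a factor recorded by $\mathcal C(\gamma)$, while the positions of the cones and the local scaling of $f$ enter through the products $\prod|x_k-x_\ell|$ and the coefficient $A_f$ of \eqref{Exp_A} that assemble into $C_f$.

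Next I would transport the bulk Liouville integral through the pullback. Because $f$ is a degree $\deg f$ local isometry off the ramification set, the Dirichlet energy and curvature integrals over $\overline{\Bbb C}_x$ equal $\deg f$ times the corresponding integrals over $\overline{\Bbb C}_z$ plus corrections concentrated near the points $x_k$, where $\log|f'|$ is singular. Regularizing these corrections --- using $f(x)-f(x_k)\sim c_k(x-x_k)^{\ord_k f+1}$, so that a cone of angle $2\pi(\beta_{f(x_k)}+1)$ downstairs becomes one of angle $2\pi((f^*\pmb\beta)_k+1)$ upstairs --- produces the local anomaly $\mathcal C((f^*\pmb\beta)_k)-(\ord_k f+1)\,\mathcal C(\beta_{f(x_k)})$, the logarithmic terms in $\ord_k f+1$ carrying the coefficients $\tfrac16(\ord_k f+1-\tfrac1{\ord_k f+1})$ and $\tfrac16((f^*\pmb\beta)_k+1+\tfrac1{(f^*\pmb\beta)_k+1})$, and the finite-part contributions $\phi_{f(x_k)}/(\beta_{f(x_k)}+1)$ coming from the constants $\phi_j$ in the asymptotics \eqref{asympt_phi}. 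Together with the $\deg f\cdot\log\det\Delta_{\pmb\beta}$ term this yields a preliminary form of the statement, namely Proposition~\ref{PBdet}, in which everything is explicit except for one overall $f$-dependent normalization.

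The hard part will be precisely this local analysis at the ramification points: $\log|f'|$ diverges there, so the naive pullback of the Polyakov integral is divergent and must be renormalized against the model cone in a way compatible with the Friedrichs boundary condition; isolating its finite part and matching it to the Barnes-zeta data of the model cone --- so that the coefficients and the $\mathcal C$-differences emerge with the exact constants --- is the delicate computation, carried out via the anomaly formulae of \cite{KalvinLast,KalvinJFA}. Finally, to disentangle and verify the universal constant ${\bf C}$ of \eqref{bfC} and the explicit $C_f$ of \eqref{C_F}, I would specialize to the natural Euclidean equilateral triangulation, i.e. the flat divisor $\pmb\beta=(-\tfrac23,-\tfrac23,-\tfrac23)$: there $(\overline{\Bbb C}_x,f^*m_{\pmb\beta})$ is a flat surface tiled by $\deg f$ equilateral double triangles whose determinant is independently accessible, and comparing it with the preliminary formula pins down the remaining normalization and completes the proof of Theorem~\ref{main}.
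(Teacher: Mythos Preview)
Your two–step strategy—derive the formula up to an $f$–dependent constant via anomaly comparison, then pin down $C_f$ by specializing to the equilateral case $\pmb\beta=(-\tfrac23,-\tfrac23,-\tfrac23)$—matches the paper exactly, and you correctly identify the origin of the $\mathcal C$–terms and the role of the local expansions $f(x)-f(x_k)\sim c_k(x-x_k)^{\ord_k f+1}$.

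The implementation of the first step differs, however, and as you describe it there is a gap. You propose comparing each of $(\overline{\Bbb C}_x,f^*m_{\pmb\beta})$ and $(\overline{\Bbb C}_z,m_{\pmb\beta})$ to a flat conical reference \emph{with the same cone orders}. But on a sphere such a flat metric exists only when the orders sum to $-2$ (Gauss--Bonnet), i.e.\ only when $|\pmb\beta|=-2$; for generic $\pmb\beta$ your reference metrics simply do not exist, and the Dirichlet--energy comparison you sketch cannot be set up as stated. The paper sidesteps this by never introducing a flat conical reference in the general step. Instead it obtains the anomaly formula on the covering directly (Lemma~\ref{ANOMALY}), via a BFK cut along $|x|=1/\epsilon$ and the Polyakov--Alvarez formula of~\cite{KalvinJFA} relative to the smooth background $|dx|^2$ on each piece, which yields an expression in terms of the Liouville integral $\int_{\Bbb C}(f^*\phi)e^{2f^*\phi}$. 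The key computation relating upstairs to downstairs is then Lemma~\ref{INT}: writing $f^*\phi=\phi\circ f+\log|f'|$, the first summand contributes $\deg f$ copies of the base integral by change of variables, and the extra piece $\int(\log|f'|)e^{2f^*\phi}$ is evaluated by the Liouville equation and Green's formula on the punctured plane, producing exactly the $c_k$–dependent boundary terms that become the first explicit form~\eqref{C_f} of $C_f$. Your ``local isometry plus corrections near $x_k$'' heuristic is the right intuition, but the actual mechanism is this integration by parts against $\log|f'|$, not a comparison of Dirichlet energies with flat references. Once you replace your reference–metric scheme by that device (or, equivalently, restrict your flat references to cone orders summing to $-2$ and track the extra anomaly terms), the rest of your outline, including the equilateral specialization that turns~\eqref{C_f} into~\eqref{C_F}, goes through as in the paper.
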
 

As we show in the proof  of Theorem~\ref{main}, the expression~\eqref{C_F} for $C_f$ comes from the Euclidean equilateral triangulation defined on $\overline {\Bbb C}_x$ by $f$, cf.~\cite{ShVo,VoSh}. 
The readers whose interests are more in the results and applications than in the proof may wish to skip Section~\ref{PFmain} and proceed directly to Sections~\ref{Unif} and~\ref{ExAppl}.

\section{Proof of the main results}\label{PFmain}

\subsection{Equality~\eqref{S20STMNT} is valid with a constant  $C_f$}\label{SS3.1.}

In this subsection we prove that the representation for the spectral determinant~\eqref{S20STMNT}  is valid with some constant $C_f$ that does not depend on the metric $m_{\pmb\beta}$ on the target Riemann sphere $\overline{\Bbb C}_z$. As a byproduct we obtain an explicit formula for $C_f$ that is different from the one in~\eqref{C_F}. 
\begin{proposition}\label{PBdet}
The equality~\eqref{S20STMNT} in Theorem~\ref{main} is valid with a constant $C_f$ that depends only on the Belyi function $f$. Moreover,
  \be\label{C_f}
  \ba
  C_f=   \frac 1 6 \sum_{k: x_k\neq\infty, f(x_k)\neq\infty} \left(\frac{\ord_k f}{\ord_k f+1} \log|c_k|   + (\ord_k f+2) \log (\ord_k f+1)\right)
 \\
 + \frac 1 6 \sum_{k: x_k\neq\infty, f(x_k)=\infty} \left( \frac{\ord_k f+2}{\ord_k f+1} \log|c_k|-\ord_k f \log (\ord_k f+1) \right) 
 \\
 + \left. \frac 1 6 \left(- \frac{\ord_k f}{\ord_k f+1} \log|c_k| - (\ord_k f+2)\log (\ord_k f+1) \right)\right|_{k:x_k=\infty, f(\infty)=\infty} 
 \\
 +\left. \frac 1 6  \left( -\frac{\ord_k f+2}{\ord_k f+1} \log|c_k| + \ord_k f\log (\ord_k f+1)  \right)\right |_{k: x_k=\infty, f(\infty)\neq\infty},
  \ea
  \ee
where 
  \begin{itemize}
  \item $c_k$ stands for the first nonzero coefficient in the Taylor series of $f(x)-f(x_k)$ at $x_k$, if $x_k$ is not a pole of $f$;
 \item  $c_k$ stands for the first  nonzero coefficient of the Laurent series of $f$ at $x_k$, if $x_k$ is a pole of $f$.
 \end{itemize}
\end{proposition}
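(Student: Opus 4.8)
The plan is to establish the key anomaly formula relating the spectral determinants on the source sphere $(\overline{\Bbb C}_x, f^*m_{\pmb\beta})$ and the target sphere $(\overline{\Bbb C}_z, m_{\pmb\beta})$, viewing $f$ as a ramified covering. Since the pullback metric $f^*m_{\pmb\beta}=e^{2f^*\phi}|dx|^2$ is conformally related to a reference flat (or constant curvature) metric on $\overline{\Bbb C}_x$, I would invoke the Polyakov--Alvarez type anomaly formula for determinants of Laplacians on surfaces with conical singularities, as developed in the author's earlier work~\cite{KalvinLast,KalvinJFA}. The crucial point is that the covering $f$ is a local isometry away from the ramification points: each of the $\deg f$ copies of the double triangle contributes identically to the ``bulk'' part of the regularized determinant, which explains the leading term $\deg f\cdot\log\det\Delta_{\pmb\beta}$ in~\eqref{S20STMNT}. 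First I would set up the comparison of $\log\det\Delta_{f^*\pmb\beta}$ against $\deg f$ copies of $\log\det\Delta_{\pmb\beta}$ by decomposing the difference into local contributions localized at the conical singularities $x_k$ and at the branch points.

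Next I would treat the local contributions at each marked point $x_k$. Near $x_k$ the covering $f$ behaves like $x\mapsto c_k(x-x_k)^{\ord_k f+1}$ (or the analogous Laurent expansion if $x_k$ is a pole), so the conical angle data transforms according to~\eqref{orders*}. The comparison of the determinant near a cone point of order $(f^*\pmb\beta)_k$ with the ramified image of a cone point of order $\beta_{f(x_k)}$ is governed by the explicit contribution of the Barnes zeta function encoded in the function $\mathcal C$ of~\eqref{cbeta}. This is exactly where the terms $\mathcal C((f^*\pmb\beta)_k)-(\ord_k f+1)\mathcal C(\beta_{f(x_k)})$ arise: the factor $\ord_k f+1$ counts the number of sheets meeting at $x_k$, and $\mathcal C$ captures the cone-point renormalization. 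The key analytical input is the behavior of the metric potential under composition, $f^*\phi=\phi\circ f+\log|f'|$, so that the finite parts $\phi_{f(x_k)}$ of the asymptotics~\eqref{asympt_phi} enter weighted by $1/(\beta_{f(x_k)}+1)$, producing the term with $\phi_{f(x_k)}$ in~\eqref{S20STMNT}.

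The remaining work is to isolate everything that depends only on $f$ (and not on the divisor $\pmb\beta$) into the single constant $C_f$, and to compute it explicitly. The strategy here is to track the contribution of $\log|f'|$ and the local scaling coefficients $c_k$ that appear when one normalizes the reference flat metric near each $x_k$. Each such normalization contributes a logarithmic term $\log|c_k|$ weighted by a factor depending on $\ord_k f$, together with a term $\log(\ord_k f+1)$ coming from the conical renormalization of the branched cover. I would organize these into four cases according to whether $x_k$ or $f(x_k)$ equals $\infty$, since the asymptotics~\eqref{asfphi} differ at infinity (the extra $-2$ shift in the exponent of $\log|x|$ in the second line of~\eqref{asfphi}), which accounts for the four groups of terms in~\eqref{C_f}. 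Collecting these yields the claimed expression, and the independence of $C_f$ from $\pmb\beta$ follows because all $\pmb\beta$-dependence has been absorbed into the explicit $\phi_{f(x_k)}$ and $\mathcal C$ terms already separated out.

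The main obstacle I anticipate is the careful bookkeeping of the conical-singularity regularizations at infinity and at the branch points simultaneously. The anomaly formula must be applied with respect to a globally defined reference metric on $\overline{\Bbb C}_x$, but the natural local models near each $x_k$ are different flat cones; reconciling these requires tracking how the scaling coefficients $c_k$ transform and how the $\log|f'|$ term distributes its singular behavior among the $x_k$. In particular, the sign conventions and the $+2$ shifts in the exponents at the poles (from~\eqref{asfphi}) must be propagated consistently, and any constant arising from the compactification at $\infty$ of the reference metric must be shown to either cancel or be folded into $C_f$. Verifying that the $\pmb\beta$-dependent pieces genuinely decouple from $C_f$—so that $C_f$ depends on $f$ alone—is the delicate structural claim that the whole proposition rests on, and it is what justifies the later refinement in Subsection~\ref{BFET} that recasts $C_f$ in the alternative form~\eqref{C_F}.
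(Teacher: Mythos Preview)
Your plan correctly identifies the overall architecture the paper uses: write both $\log\det\Delta_{f^*\pmb\beta}$ and $\log\det\Delta_{\pmb\beta}$ via Polyakov--Alvarez type anomaly formulas against a reference metric, then compare, and finally organize the residual terms into the four cases according to whether $x_k$ or $f(x_k)$ equals $\infty$. That part matches the paper's proof essentially step for step.

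However, there is one technical step you gloss over that carries the main analytical weight. The anomaly formula for the source sphere (the paper's Lemma~\ref{ANOMALY}) contains the bulk integral $\int_{\Bbb C}(f^*\phi)\,e^{2f^*\phi}$, and the whole proposition hinges on relating this to $\deg f\cdot\int_{\Bbb C}\phi\, e^{2\phi}$ plus explicit local terms. The paper isolates this as a separate lemma (Lemma~\ref{INT}): writing $f^*\phi=\phi\circ f+\log|f'|$, the first piece changes variables to give exactly $\deg f$ copies of the target integral, while the second piece is handled by observing that $\log|f'|$ is \emph{harmonic} away from the marked points, so that via the Liouville equation and Green's formula the quantity $(|\pmb\beta|+2)\int\log|f'|\,e^{2f^*\phi}$ reduces to a sum of boundary contributions near each $x_k$, computable from the local expansions~\eqref{asymp_log_f1}--\eqref{asymp_log_f2}. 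Your phrase ``track the contribution of $\log|f'|$'' points in the right direction but does not supply this mechanism; without it the heuristic that ``$f$ is a local isometry away from the branch points, so the bulk contributes $\deg f\cdot\log\det\Delta_{\pmb\beta}$'' is not actually justified at the level of the anomaly integrals. Once Lemma~\ref{INT} is in hand, the rest of your outline---substituting both anomaly formulas, using~\eqref{OEQ} to reindex the sums, and the four-case bookkeeping to extract $C_f$ and verify its $\pmb\beta$-independence---matches the paper's argument exactly.
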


The proof of Proposition~\ref{PBdet} is preceded by Lemma~\ref{INT} below.  To formulate the lemma we need the following refined version of the asymptotics~\eqref{asfphi}  for the metric potential 
$f^*\phi$:
\be\label{asympPB1}
\ba
(f^*\phi) (x)=  \left(f^*\pmb\beta\right)_k  \log|x-x_k|    +(f^*\phi)_k +o(1),\quad x\to x_k\neq\infty,
\\
(f^*\phi) (x)= - \bigl( \left(f^*\pmb\beta\right)_k   +2\bigr)   \log|x|     +(f^*\phi)_k  +o(1) , \quad x\to x_k=\infty,
\ea
\ee
where  $\left(f^*\pmb\beta\right)_k$  is the same as in~\eqref{orders*}. Since $f^*\phi=\phi\circ f+\log |f'|$, it is not hard to see that the coefficients $(f^*\phi)_k$ in~\eqref{asympPB1} satisfy
\be\label{1}
\ba
(f^*\phi)_k=\phi_{f(x_k)}+(\beta_{f(x_k)}+1)\log|c_k|  +\log(\ord_k f+1), \quad \text{ if }\quad f(x_k)\neq\infty,
\\
(f^*\phi)_k=\phi_{f(x_k)}-(\beta_{f(x_k)}+1)\log|c_k|  +\log(\ord_k f+1), \quad \text{ if }\quad f(x_k)=\infty.
\ea
\ee
Here $\beta_j$ and $\phi_j$ with $j\in\{0,1,\infty\}$ are the same as in~\eqref{asympt_phi}, and $c_k$ is the same as in Proposition~\ref{PBdet}.

\begin{lemma} \label{INT} 
Recall that $m_{\pmb\beta}=e^{2\phi}|dz|^2$ and $f^*m_{\pmb\beta}=e^{f^*\phi}|dx|^2$. We have
$$
\ba
(|\pmb\beta|+2)\int_{\Bbb C}(f^*\phi) e^{2f^*\phi}\frac {dx\wedge d\bar x}{-2i}=(|\pmb\beta|+2)\deg f\int_{\Bbb C} \phi e^{2\phi}\frac {dz\wedge d\bar z}{-2i}
\\
-\sum_{k: f(x_k)\neq \infty}(f^*\phi)_k \ord_k f
+ \sum_{k: f(x_k)=\infty}  (f^*\phi)_k\ord_k f + \sum_{k}(f^*\pmb\beta)_k \log\left |(\ord_k f+1)c_k\right|
\\
+2 \sum_{k: x_k\neq\infty, f(x_k)=\infty}(f^*\phi)_k+\left. 2  \log\left |(\ord_k f+1)c_k\right|\right|_{k:x_k=\infty} -\left.2(f^*\phi)_k\right|_{k:x_k=\infty, f(\infty)\neq\infty},
\ea
$$
where $\left(f^*\pmb\beta\right)_k$ and  $(f^*\phi)_k$ are the coefficients in the asymptotics~\eqref{asympPB1} satisfying the equalities~\eqref{orders*} and~\eqref{1} respectively. 
\end{lemma}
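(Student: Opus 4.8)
The plan is to split the pulled-back potential as $f^*\phi=\phi\circ f+\log|f'|$ and handle the two pieces separately. For the first piece I would use that $e^{2f^*\phi}\,\frac{dx\wedge d\bar x}{-2i}=f^*\!\bigl(e^{2\phi}\,\frac{dz\wedge d\bar z}{-2i}\bigr)$ is precisely the pullback under $f$ of the area form of $m_{\pmb\beta}$; since $f$ is a branched covering of degree $\deg f$ and the branch points form a null set, the pushforward identity $\int (G\circ f)\,f^*\omega=\deg f\int G\,\omega$ (applied with $G=\phi$, which is integrable against the finite measure $e^{2\phi}\,\frac{dz\wedge d\bar z}{-2i}$ by \eqref{asympt_phi}) gives
\[
(|\pmb\beta|+2)\int_{\Bbb C}(\phi\circ f)\,e^{2f^*\phi}\,\frac{dx\wedge d\bar x}{-2i}=(|\pmb\beta|+2)\deg f\int_{\Bbb C}\phi\,e^{2\phi}\,\frac{dz\wedge d\bar z}{-2i},
\]
the first term on the right-hand side. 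It then remains to identify the second piece $J:=(|\pmb\beta|+2)\int_{\Bbb C}\log|f'|\,e^{2f^*\phi}\,\frac{dx\wedge d\bar x}{-2i}$ with the collection of local terms in the statement.

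For $J$ I would invoke the pulled-back Liouville equation \eqref{LUx} in the form $2\pi(|\pmb\beta|+2)\,e^{2f^*\phi}=-4\partial_x\partial_{\bar x}(f^*\phi)$, valid off $\{x_1,\dots,x_n\}$, to rewrite $J=-\tfrac1{2\pi}\int_{\Bbb C}\log|f'|\,\Delta(f^*\phi)$ with $\Delta=4\partial_x\partial_{\bar x}$. The structural point that makes this tractable is that, since $f$ is unramified outside $\{0,1,\infty\}$, every zero and pole of $f'$ sits among the $x_k$; hence on $\Omega_{\epsilon,R}:=\{|x|<R\}\setminus\bigcup_{x_k\neq\infty}D_\epsilon(x_k)$ the function $\log|f'|$ is harmonic, and Green's second identity collapses to boundary terms,
\[
\int_{\Omega_{\epsilon,R}}\log|f'|\,\Delta(f^*\phi)=\int_{\partial\Omega_{\epsilon,R}}\bigl(\log|f'|\,\partial_n(f^*\phi)-(f^*\phi)\,\partial_n\log|f'|\bigr)\,ds,
\]
a sum over the small circles $|x-x_k|=\epsilon$ and the large circle $|x|=R$.

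The circles are then evaluated by feeding in the refined asymptotics \eqref{asympPB1} for $f^*\phi$ (leading coefficient $(f^*\pmb\beta)_k$, constant term $(f^*\phi)_k$) together with the elementary local expansion of $\log|f'|$. Near a finite $x_k$ with $f(x_k)\neq\infty$ one has $f(x)-f(x_k)=c_k(x-x_k)^{\ord_k f+1}(1+o(1))$, whence $\log|f'|=\ord_k f\,\log|x-x_k|+\log|(\ord_k f+1)c_k|+o(1)$, while at a finite pole the leading exponent is $-(\ord_k f+2)$; these are exactly what produce the orders $\ord_k f$, the sign distinguishing $f(x_k)=\infty$ from $f(x_k)\neq\infty$, and the constants $\log|(\ord_k f+1)c_k|$. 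Using that for $u=a\log\rho+A+o(1)$ and $w=b\log\rho+B+o(1)$ the combination $\int(w\,\partial_n u-u\,\partial_n w)\,ds$ over a circle of radius $\rho$ tends to $\pm2\pi(Ab-Ba)$ (the logarithmic terms cancelling), each inner circle contributes $2\pi\bigl[(f^*\phi)_k m_k-(f^*\pmb\beta)_k\log|(\ord_k f+1)c_k|\bigr]$, where $m_k=\ord_k f$ if $f(x_k)\neq\infty$ and $m_k=-(\ord_k f+2)$ if $f(x_k)=\infty$. Multiplying by $-\tfrac1{2\pi}$, letting $\epsilon\to0$, $R\to\infty$, and simplifying via \eqref{orders*} and \eqref{1}, this reproduces the finite part of the claimed formula; together with the first piece it assembles the whole right-hand side.

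The hard part will be the outer circle $|x|=R$, that is the point $x_k=\infty$. There $\log|f'|$ is not a conformal invariant and its expansion differs from the finite case, and this is precisely the origin of the extra correction terms $2\log|(\ord_k f+1)c_k|\big|_{x_k=\infty}$ and $-2(f^*\phi)_k\big|_{x_k=\infty,\,f(\infty)\neq\infty}$. I would extract them by repeating the circle computation with the asymptotics \eqref{asympPB1} at infinity (leading coefficient $-((f^*\pmb\beta)_k+2)$) and the local form of $f$ at $\infty$, tracking separately the subcases $f(\infty)=\infty$ and $f(\infty)\neq\infty$ and matching against \eqref{1}. The remaining points are purely analytic and routine: absolute convergence of $\log|f'|\,e^{2f^*\phi}$ and $(f^*\phi)\,e^{2f^*\phi}$ near the conical points (guaranteed by $(f^*\pmb\beta)_k>-1$), and the vanishing of the $o(1)$ remainders against the $O(1/\rho)$ normal derivatives as the circles shrink or expand.
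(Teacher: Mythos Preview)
Your proposal is correct and follows essentially the same route as the paper's proof: the same splitting $f^*\phi=\phi\circ f+\log|f'|$, the same covering argument for the first piece, and the same reduction of the $\log|f'|$ integral via the Liouville equation and Green's identity (using harmonicity of $\log|f'|$ on the punctured domain) to boundary contributions on small circles about the $x_k$ and a large circle, evaluated with the asymptotics \eqref{asympPB1} and the local expansions of $\log|f'|$. Your identification of the special role of $x_k=\infty$ and the two subcases $f(\infty)=\infty$ versus $f(\infty)\neq\infty$ as the source of the extra correction terms is exactly what the paper does as well.
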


\begin{proof}
We begin with the equality
 $$
\int_{\Bbb C} (f^*\phi) e^{2f^*\phi}\frac {dx\wedge d\bar x}{-2i}=\deg f \cdot\int_{\Bbb C} \phi e^{2\phi}\frac {dz\wedge d\bar z}{-2i}+\int_{\Bbb C} (\log |f'|) e^{2f^*\phi} \frac {dx\wedge d\bar x}{-2i}
$$
that easily follows from the relation $f^*\phi=\phi\circ f+\log |f'|$.
Thus, in order to prove the lemma, we only need to evaluate the last integral.

Thanks to the Liouville equation~\eqref{LUx} we have
\be\label{Int log f}
\ba
(|\pmb\beta|+2)\int_{\Bbb C} (\log |f'|) e^{2 f^*\phi} \frac {dx\wedge d\bar x}{-2i}=\lim_{\epsilon\to 0} \frac 1 {2\pi}\int_{\Bbb C^\epsilon}   (\log |f'|)\bigl (-4\partial_x\partial_{\bar x} (f^*\phi)\bigr) \frac {dx\wedge d\bar x}{-2i}
\\
=\lim_{\epsilon\to 0} \frac 1 {2\pi}\oint_{\partial \Bbb C^\epsilon} \Bigl((f^*\phi) \partial_{\vec n }(\log|f'|) -(\log|f'|) \partial_{\vec n } (f^*\phi)  \Bigr) \,  |d x|,
\ea
\ee
where $\Bbb C^\epsilon$ stands for the large disk $|x|>1/\epsilon$ with the  small disks $|x-x_k|<\epsilon$ encircling the marked points $x_k$ removed, and ${\vec n}$ is the unit outward normal. The last equality in~\eqref{Int log f} is valid because $x\mapsto \log|f'(x)|$ is a harmonic function on $\Bbb C^\epsilon$.

Notice that if $x_k$ is not a pole of  $f$, then we  have
\be\label{asymp_log_f1}
\ba
 \log|f'(x)|=\ord_k f\log |x-x_k|+\log\left |(\ord_k f+1)c_k\right|+o(1),\  x\to x_k\neq\infty,
\\
\log|f'(x)|=-(\ord_k f +2)\log|x| +\log\left |(\ord_k f+1)c_k\right|+o(1)  , \  x\to x_k=\infty.
\ea
\ee
Besides, if $x_k$ is a pole of   $f$, we have 
\be\label{asymp_log_f2}
\ba
  \log|f'(x)|=& -(\ord_k f +2)\log|x-x_k| 
  \\
  &\qquad \qquad+\log |(\ord_k f+1)c_k|+o(1),\quad  x\to x_k\neq \infty, 
\\
  \log|f'(x)|=& \ord_k f  \log|x|
 +\log |(\ord_k f+1)c_k| +o(1),\quad x\to x_k= \infty.
\ea
\ee

Below, for the derivatives along the unit outward normal vector ${\vec n}$ we use the equalities
$$
\partial_{\vec n}=-\partial_{|x-x_j|}\quad \text{on}\quad |x-x_j|=\epsilon;\quad \partial_{\vec n}=\partial_{|x|}\quad \text{on}\quad |x|=1/\epsilon.
$$

We calculate the integral in right hand side of~\eqref{Int log f}
 relying on~\eqref{asympPB1} together with~\eqref{asymp_log_f1} and~\eqref{asymp_log_f2}. As a result, we obtain
$$
\ba
\lim_{\epsilon\to 0}\frac 1 {2\pi}\oint_{\partial \Bbb C^\epsilon} \Bigl((f^*\phi) \partial_{\vec n }(\log|f'|) -(\log|f'|) \partial_{\vec n } (f^*\phi)  \Bigr)\,|d x|
\\
=\sum_{j=1}^n \frac 1 {2\pi}\lim_{\epsilon\to 0}\oint_{ |x-x_j|=\epsilon}\Bigl((f^*\phi) \partial_{\vec n }(\log|f'|) -(\log|f'|) \partial_{\vec n } (f^*\phi)  \Bigr)\,|d x|
\\
=\sum_{k: x_k\neq\infty, f(x_k)\neq \infty}\Bigl( -(f^*\phi)_k \ord_k f+(f^*\pmb\beta)_k \log\left |(\ord_k f+1)c_k\right|\Bigr)
\\
+ \sum_{k: x_k\neq\infty, f(x_k)=\infty}\Bigl (   (f^*\phi)_k(\ord_k f +2)+(f^*\pmb\beta)_k \log\left |(\ord_k f+1)c_k \right| \Bigr)
\\
+\Bigl( -(f^*\phi)_k  (\ord_kf +2)+(\log\left |(\ord_k f+1)c_n\right|) \bigl( (f^*\pmb\beta)_k  +2\bigr) \Bigr)\Bigr|_{k: x_k=\infty, f(\infty)\neq \infty}
\\
+\Bigl(    (f^*\phi)_k  \ord_k f  + ((f^*\pmb\beta)_k  +2)  \log\left |(\ord_k f+1)c_k\right|\Bigr)\Bigr|_{k: x_k=\infty, f(\infty)=\infty}.
\ea
$$
After regrouping the terms  in the right hand side, this implies the assertion of Lemma~\ref{INT}. 
\end{proof}

\begin{lemma}\label{ANOMALY}
For the determinant $\det\Delta_{f^*\pmb\beta}$ of the Friedrichs  Laplacian on $(\overline{\Bbb C}_x, f^*m_\beta)$ the anomaly formula 
\be\label{S20Anom}
\ba
\log \frac {\det\Delta_{f^*\pmb\beta}}{\deg f}   = -& \frac{|\beta|+2}{6}\int_{\Bbb C} f^*\phi e^{2f^*\phi}\,\frac {dx\wedge d\bar x} {-2i} 
+\frac  1 6 \sum_{k: x_k\neq \infty}\frac {\left(f^*\pmb\beta\right)_k}{\left(f^*\pmb\beta\right)_k+1} (f^*\phi)_k
\\&
   -\left. \frac 1 6
\frac{\left(f^*\pmb\beta\right)_k+2} {\left(f^*\pmb\beta\right)_k+1}(f^*\phi)_k\right |_{k:x_x=\infty}
  -\sum_{k}\mathcal C\bigl(\left(f^*\pmb\beta\right)_k\bigr)
 + {\bf C}
\ea
\ee
is valid. 
Here   $\mathcal C(\beta)$ is the function defined  in~\eqref{cbeta}, and $\bf C$ stands for the constant~\eqref{bfC}.
\end{lemma}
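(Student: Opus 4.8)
The plan is to establish the anomaly formula \eqref{S20Anom} by applying the general Polyakov--Alvarez type anomaly formula for zeta-regularized determinants of Friedrichs Laplacians on surfaces with conical singularities, as developed in the author's earlier work \cite{KalvinLast,KalvinJFA}. The key observation is that the target sphere $(\overline{\Bbb C}_z, m_{\pmb\beta})$ and the covering $(\overline{\Bbb C}_x, f^*m_{\pmb\beta})$ carry the \emph{same} Gaussian curvature $2\pi(|\pmb\beta|+2)$ away from the conical points, and that the covering metric is obtained by pulling back the base metric by $f$. The determinant on the covering should therefore decompose as $\deg f$ copies of the base contribution plus correction terms localized at the ramification points $x_1,\dots,x_n$, where the local cone structure changes according to \eqref{orders*}. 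The factor $1/\deg f$ on the left and the overall normalization reflect the zero mode of the Laplacian (the constant functions) on each sheet.

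\emph{First} I would recall the precise form of the anomaly formula from \cite{KalvinLast,KalvinJFA} that compares $\det\Delta$ for two conformally related metrics $e^{2\phi}|dx|^2$ and a reference constant-curvature metric, expressing the ratio in terms of a bulk Liouville-type integral $\int \phi\, e^{2\phi}$ (coming from the curvature term), boundary/vertex contributions of the schematic form $\tfrac{1}{6}\tfrac{\beta}{\beta+1}\phi_k$ at each conical point of order $\beta$, and the universal function $\mathcal C(\beta)$ of \eqref{cbeta} built from the Barnes double zeta function that encodes the intrinsic conical-singularity contribution to the determinant. Applying this formula directly to the metric $f^*m_{\pmb\beta}=e^{2f^*\phi}|dx|^2$ on $\overline{\Bbb C}_x$, with conical orders $(f^*\pmb\beta)_k$ and asymptotic coefficients $(f^*\phi)_k$ as in \eqref{asympPB1}, produces exactly the terms on the right-hand side of \eqref{S20Anom}: the bulk integral $-\tfrac{|\beta|+2}{6}\int f^*\phi\, e^{2f^*\phi}$, the vertex sums $\tfrac16\tfrac{(f^*\pmb\beta)_k}{(f^*\pmb\beta)_k+1}(f^*\phi)_k$ (with the distinguished treatment of $x_k=\infty$ reflecting the $-((f^*\pmb\beta)_k+2)\log|x|$ behaviour there), the sum of $\mathcal C\bigl((f^*\pmb\beta)_k\bigr)$, and the universal constant $\bf C$.

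\emph{Next} I would verify the normalization and the constant $\bf C$. The anomaly formula from the cited references is most naturally stated relative to a fixed reference metric (e.g.\ the round or flat background), and one must confirm that the accumulated constant matches \eqref{bfC}; this is where the terms $-\log\pi$, $-4\zeta_R'(-1)$, $-\tfrac43\log 2$ and $\tfrac16$ arise from the determinant of the reference operator and the Gauss--Bonnet normalization. The factor $\deg f$ under the logarithm on the left is handled by noting that $(\overline{\Bbb C}_x, f^*m_{\pmb\beta})$ has total area $\deg f$ times that of the unit-area base, and tracking how the zero eigenvalue is removed in the zeta-regularization scales the determinant by the area, hence by $\deg f$.

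\emph{The main obstacle} I expect is ensuring the anomaly formula applies uniformly across \emph{all} the conical orders $(f^*\pmb\beta)_k$ that occur, which may now lie anywhere in $(-1,\infty)$ rather than in the restricted range $(-1,0]$ used for the base metric $m_{\pmb\beta}$: when $\ord_k f\ge 1$ and $\beta_{f(x_k)}$ is close to $0$, the order $(f^*\pmb\beta)_k=(\ord_k f+1)(\beta_{f(x_k)}+1)-1$ can exceed $0$, giving large cone angles, and one must check that the anomaly formula and the function $\mathcal C$ remain valid there (indeed $\mathcal C$ in \eqref{cbeta} is defined on all of $(-\infty,0]$, so the extension to larger cone angles, i.e.\ more negative-looking arguments in the Barnes function, requires care). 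The delicate point is the behaviour of the Friedrichs extension and the regularization at these points, together with correctly matching the local heat-kernel coefficients that feed into $\mathcal C$; I would resolve this by invoking the general conical-singularity anomaly formula of \cite{KalvinJFA} in its full range of validity and checking the endpoint cases separately, including the possibility $(f^*\pmb\beta)_k=0$ (a regular point), where the corresponding summand must vanish consistently.
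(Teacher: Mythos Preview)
Your proposal identifies the right ingredients and the correct structure of the formula, and in spirit it matches the paper. However, there is a genuine methodological difference worth noting.

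You propose to apply a closed-surface Polyakov--Alvarez anomaly formula from \cite{KalvinLast,KalvinJFA} directly to $(\overline{\Bbb C}_x, f^*m_{\pmb\beta})$. The paper instead re-derives this formula by the same cut-and-glue argument used in \cite[Proof of Prop.~2.1]{KalvinLast}: it excises a small cap around $x=\infty$ by cutting along the circle $|x|=1/\epsilon$, applies the \emph{disk} Polyakov--Alvarez formula of \cite[Theorem~1.1.2]{KalvinJFA} separately on the large disk $|x|\leq 1/\epsilon$ and on the cap $|x|\geq 1/\epsilon$ (relative to the flat metric $|dx|^2$, for which $\det\Delta^D_0$ on the disk is known explicitly from \cite{Weisberger}), then recombines via the BFK formula \cite[Theorem~B$^*$]{BFK} and passes to the limit $\epsilon\to 0^+$. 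The Neumann jump operator contribution and the area term produce the $\log\deg f$ and the $-\log 2$ pieces, and the explicit disk and cap asymptotics are what assemble the constant ${\bf C}$ and the distinguished $\frac{(f^*\pmb\beta)_k+2}{(f^*\pmb\beta)_k+1}$ term at $x_k=\infty$.

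What this buys the paper is precisely a resolution of the obstacle you flag: the disk formula in \cite{KalvinJFA} is stated and proved for arbitrary cone orders $\beta>-1$, so no separate argument is needed for the large cone angles $(f^*\pmb\beta)_k>0$ that arise here; your worry about the range of validity of $\mathcal C$ is handled automatically. Your route is shorter if one is willing to cite the closed-surface formula as established (indeed the paper itself does so later, in \eqref{AnFla}), but strictly speaking the available references prove it for three or four singularities, so the paper's re-derivation for $n=\deg f+2$ singularities via disk$+$cap$+$BFK is the more self-contained choice.
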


\begin{proof} The main argument of the proof is similar to the one in~\cite[Proof of Prop. 2.1]{KalvinLast}. 
For this reason, we skip the details that can be easily restored from the references~\cite{KalvinLast,KalvinCCM,KalvinJFA}. 

Let us first obtain an asymptotics for the determinant of the Friedrichs Dirichlet Laplacian  $\Delta^D_{f^*\pmb\beta}\!\!\restriction_{|x|\leq 1/\epsilon}$ on the disk $|x|\leq 1/\epsilon$ endowed with the metric $f^*m_{\pmb\beta}$ as $\epsilon\to 0^+$.
Denote by $\Delta^D_0 \!\!\restriction_{|x|\leq 1/\epsilon}$ the selfadjoint Dirichlet  Laplacian on the disk $|x|\leq 1/\epsilon$ equipped with the flat background metric $|dx|^2$. As is known~\cite{Weisberger}, for the determinant of the Dirichlet  Laplacian on the flat disk one has
$$
\log \det\Delta^D_0 \!\!\restriction_{|x|\leq 1/\epsilon}=\frac 1 3 \log\epsilon +\frac 1 3 \log 2-\frac 1 2\log 2 \pi -\frac 5 {12} -2\zeta_R'(-1).
$$
On the other hand, the Polyakov-Alvarez type formula from~\cite[Theorem~1.1.2]{KalvinJFA} reads
\be\label{PAdisk}
\ba
\log \frac {\det \Delta^D_{f^*\pmb\beta}\!\!\restriction_{|x|\leq 1/\epsilon}}{\det \Delta^D_0 \!\!\restriction_{|x|\leq 1/\epsilon}} =& -\frac {|f^*\pmb\beta|+2}{6\deg f }  \int_{\Bbb C} (f^*\phi) e^{2f^*\phi}\frac {dx\wedge d\bar x}{-2i} 
\\
& -\frac 1 {12\pi}\oint_{|x|=1/\epsilon}(f^*\phi)\partial_{\vec n}(f^*\phi)\,|dx|-\frac \epsilon {6\pi}\oint_{|x|=1/\epsilon} (f^*\phi)\, |dx|
\\
&
-\frac 1 {4\pi} \oint_{|x|=1/\epsilon} \partial_{\vec n}(f^*\phi)\, |dx|+\frac 1 6 \sum_{k: x_k\neq\infty} \frac {\left(f^*\pmb\beta\right)_k}{\left(f^*\pmb\beta\right)_k+1} (f^*\phi)_k
\\
&-\sum_{k: x_k\neq \infty} \mathcal C\bigl(\left(f^*\pmb\beta\right)_k\bigr),
\ea
\ee
where $|f^*\pmb\beta|=\sum_k  \left(f^*\pmb\beta\right)_k$ is the degree of the divisor $f^*\pmb\beta=\sum_k  \left(f^*\pmb\beta\right)_k\cdot x_k$. 

By the Gauss-Bonnet theorem~\cite{Troyanov}, the product of the (regularized) Gaussian curvature by the total area of the singular sphere $(\overline{\Bbb C}_x, f^*m_{\pmb\beta})$ equals  $2\pi(|f^*\pmb\beta|+2)$.  Since the (regularized) Gaussian curvature is the one of  $m_{\pmb\beta}$, and the total area is $\deg f$, we conclude that  $$
\frac{|f^*\pmb\beta|+2}{ \deg f}=|\pmb\beta|+2.
$$

In~\eqref{PAdisk} we also replace the integrals along the circle $|x|=1/\epsilon$  with their asymptotics.
 As a result,   the Polyakov-Alvarez type formula~\eqref{PAdisk} implies
$$
\ba
\log {\det \Delta^D_{f^*\pmb\beta}\!\!\restriction_{|x|\leq 1/\epsilon}}=-\frac {|\pmb\beta|+2}{6}  \int_{\Bbb C}(f^*\phi) e^{2f^*\phi}\frac {dx\wedge d\bar x}{-2i} +\frac 1 6 \sum_{k: x_k\neq\infty} \frac {\left(f^*\pmb\beta\right)_k}{\left(f^*\pmb\beta\right)_k+1} (f^*\phi)_k
\\
-\sum_{k:x_k\neq \infty} \mathcal C\bigl(  \left(f^*\pmb\beta\right)_k\bigr )
+\frac 1 6 \left(f^*\pmb\beta\right)_k \left((f^*\phi)_k+3\right)|_{k:x_k=\infty} 
+\frac 1 3 \log 2-\frac 1 2\log 2 \pi +\frac 7 {12} 
\\
-2\zeta_R'(-1)
+   \frac 1 6\left( \left(\left(f^*\pmb\beta\right)_k+2\right)^2 -2\left(\left(f^*\pmb\beta\right)_k+1\right)\right)|_{k:x_k=\infty} \log \epsilon + o(1),\ \epsilon\to 0^+.
\ea
$$

In a similar way, one can also obtain the asymptotics for the determinant of theFriedrichs Dirichlet Laplacian on the cap $|x|\geq 1/\epsilon$ of the singular sphere $(\overline{\Bbb C}_x, f^*m_\beta)$:
$$
\ba
\log \det\Delta^D_{f^*\pmb\beta}\!\!\restriction_{|x|\geq 1/\epsilon}=-\frac 1 6 ((\left(f^*\pmb\beta\right)_k+1)^2+1)\log \epsilon-\frac 1 6\left(\left(f^*\pmb\beta\right)_k+1+\frac 1 {\left(f^*\pmb\beta\right)_k+1}\right)(f^*\phi)_k
\\
-\mathcal C\bigl(\left(f^*\pmb\beta\right)_k\bigr) -2\zeta_R'(-1)-\frac 5 {12} +\frac 1 3 \log 2 -\frac 1 2 \log2\pi -\frac {\left(f^*\pmb\beta\right)_k} 2 +o(1),\quad \epsilon\to 0^+,
\ea
$$
where $k$ is such that $x_k=\infty$. In total, we have
\be\label{FinalAsympt}
\ba
\log {\det \Delta^D_{f^*\pmb\beta}\!\!\restriction_{|x|\leq 1/\epsilon}}+  \log \det\Delta^D_{f^*\pmb\beta}\!\!\restriction_{|x|\geq 1/\epsilon} =-\frac {|\pmb\beta|+2}{6}  \int_{\Bbb C}(f^*\phi) e^{2f^*\phi}\frac {dx\wedge d\bar x}{-2i} 
\\
+\frac 1 6 \sum_{k: x_k\neq\infty} \frac {\left(f^*\pmb\beta\right)_k}{\left(f^*\pmb\beta\right)_k+1} (f^*\phi)_k
 -\frac 1 6\left(1+\frac 1 {\left(f^*\pmb\beta\right)_k+1}\right) (f^*\phi)_k
 \\ -\sum_{k} \mathcal C\bigl(\left(f^*\pmb\beta\right)_k\bigr)+{\bf C}+\log 2 + o(1),\quad \epsilon\to0^+.
\ea
\ee

Now we  cut the singular sphere $\left(\overline{\Bbb C}_x, f^*m_{\pmb\beta}\right)$ along the circle $|x|=1/\epsilon$ with the help of the BFK formula~\cite[Theorem B$^*$]{BFK} to obtain
\be\label{BFKf}
\ba
\log \det \Delta_{f^*\pmb\beta}=&\log \deg f -\log 2 \\&+ \lim_{\epsilon\to 0^+}\left(\log \det \Delta^D_{f^*\pmb\beta}\!\!\restriction_{|x|\leq 1/\epsilon}+\log \det \Delta^D_{f^*\pmb\beta}\!\!\restriction_{|x|\geq 1/\epsilon} \right).
\ea
\ee
Here $\deg f$ is the total area of the singular sphere, and  the number $-\log 2$  is the value of  the difference
$$
\log \det \mathcal N(\epsilon)-\log \oint_{|x|=1/\epsilon}e^{f^*\phi}\,|dx|,
$$
 where $ \det \mathcal N(\epsilon)$ is the determinant of the Neumann jump operator on the cut $|x|=1/\epsilon$, see~\cite[Proof of Prop. 2.1]{KalvinLast} or~\cite{KalvinCCM}. As demonstrated in~\cite{KalvinJFA}, the BFK formula~\eqref{BFKf} remains valid in spite of the fact that the metric  $f^*m_{\pmb\beta}$ is singular.

In the limit, the asymptotics~\eqref{FinalAsympt} together with the BFK formula~\eqref{BFKf} implies the anomaly formula~\eqref{S20Anom}. This completes the proof of Lemma~\ref{ANOMALY}. 
\end{proof}

\begin{proof}[Proof of Proposition~\ref{PBdet}]
Thanks to  Lemma~\ref{INT}, the integral in the right-hand side of the anomaly formula in~Lemma~\ref{ANOMALY} reduces to an integral on the target sphere $(\overline{\Bbb C}_z,m_{\pmb\beta})$. Our next purpose is to express that integral in terms of the determinant of Laplacian $\Delta_{\pmb\beta}$. With this aim in mind, we write  down the anomaly formula
\begin{equation}\label{DetDelta}
\begin{aligned}
\log {\det\Delta_{\pmb\beta}} 
 = -\frac{|\pmb\beta|+2}{6}\int_{\Bbb C} \phi e^{2\phi}\,\frac {dz\wedge d\bar z} {-2i} 
  +&\frac  1 6 \sum_{j=0,1}\frac {\beta_j}{\beta_j+1}\phi_j 
  \\  
 & -\frac 1 6\frac {\beta_\infty+2} {\beta_\infty+1}\phi_\infty -\sum_{j\in\{0,1,\infty\}}\mathcal C(\beta_j)
+{\bf C},
 \end{aligned}
\end{equation}
which is~\eqref{S20Anom} in the particular case of $f\equiv 1$. Now we change the index of summation from $j\in\{0,1,\infty\}$ to $k=1,2,\dots,n$ as follows:
\begin{equation}\label{S20.2}
\begin{aligned}
\log
  {\det\Delta_{\pmb\beta}} 
 =-\frac{|\pmb\beta|+2}{6}\int_{\Bbb C} \phi e^{2\phi}\,\frac {dz\wedge d\bar z} {-2i} 
  +\frac  1 6 \sum_{k: f(x_k)\neq \infty}  \frac {\ord_k f+1}{\deg f} \frac {\beta_{f(x_k)}}{\beta_{f(x_k)}+1}\phi_{f(x_k)}
  \\-\sum_{k}  \frac {\ord_k f+1}{\deg f}\mathcal C(\beta_{f(x_k)})
  -\frac 1 6  \sum_{k:f(x_k)=\infty} \frac {\ord_k f+1}{\deg f}  \frac {\beta_{f(x_k)}+2} {\beta_{f(x_k)}+1}\phi_{f(x_k)}
  +{\bf C}.
\end{aligned}
\end{equation}
Here we rely on the obvious equalities 
\be\label{OEQ}
\sum_{k: f(x_k)=j} (\ord_k f+1)=\deg f\ \text{ for any fixed } j\in\{0,1,\infty\}.
\ee

The equality~\eqref{S20.2} together with the result of  Lemma~\ref{INT} allows us to express the integral in the anomaly formula~\eqref{S20Anom} in terms of $\det\Delta_{\pmb\beta}$ and the explicit uniformization data in~\eqref{asympt_phi}. In the remaining part of the proof, we show that as a  result, we arrive at the equality~\eqref{S20STMNT} with  $C_f$ given by~\eqref{C_f}.

It is  easy to see that proceeding as discussed above,  we get the terms 
\be\label{EasyTerms}
\deg f \cdot\log\det\Delta_{\pmb \beta}- \sum_{k}\left(\mathcal C\left(\left(f^*\pmb\beta\right)_k\right)
  -(\ord_k f+1) \mathcal C(\beta_{f(x_k)})\right) -  (\deg f-1)  {\bf C}  
\ee
in the right hand side of~\eqref{S20STMNT}, we omit the details.  

It is considerably harder to show that we also obtain the other terms in the right-hand sides of~\eqref{S20STMNT} and~\eqref{C_f}.	 With this aim in mind, we separately consider  the following four possibilities: 
\begin{multicols}{2}
\begin{itemize}
\item $x_k\neq\infty$ is not a pole of $f$, 
\item $x_k\neq\infty$ is a pole of $f$,
\item  $x_k=\infty$ is a not pole of $f$, and 
\item   $x_k=\infty$ is a pole of $f$. 
\end{itemize}
\end{multicols}
 
If $x_k\neq\infty$ is not a pole of  $f$, then, in addition to the terms listed in~\eqref{EasyTerms}, we get $\frac 1 6$ times
$$
\ba
{
\frac {\left(f^*\pmb\beta\right)_k}{\left(f^*\pmb\beta\right)_k+1} (f^*\phi)_k} -\Bigl( -(f^*\phi)_k \ord_k f+(f^*\pmb\beta)_k \log\left |(\ord_k f+1)c_k\right|\Bigr)\\-{
(\ord_k f+1) \frac {\beta_{f(x_k)}}{\beta_{f(x_k)}+1}\phi_{f(x_k)}}
\\
=\left(\ord_k f+1-\frac 1 {\ord_k f +1}\right)\frac{\phi_{f(x_k)}}{\beta_{f(x_k)}+1}     - \left( (f^*\pmb\beta)_k+1+\frac 1 { (f^*\pmb\beta)_k+1}  \right)\log (\ord_k f+1)
\\
  +  \left( \frac{\ord_k f}{\ord_k f+1} \log|c_k| + (\ord_k f+2)\log (\ord_k f+1)\right).
\ea
$$
Here the first two terms in the right-hand side contribute to the first and the second sums in the right-hand side of~\eqref{S20STMNT} correspondingly. The last term goes to $C_f$, cf.~\eqref{C_f}.

 If  $x_k \neq  \infty$ is a pole of  $f$, then we get the terms listed in~\eqref{EasyTerms} and also $\frac 1 6$ times
$$
\ba
\frac {\left(f^*\pmb\beta\right)_k}{\left(f^*\pmb\beta\right)_k+1} (f^*\phi)_k -\Bigl (   (f^*\phi)_k(\ord_k f +2)+(f^*\pmb\beta)_k  \log\left |(\ord_k f+1)c_k\right|\Bigr)
\\
 +(\ord_k f+1) \frac {\beta_{f(x_k)}+2} {\beta_{f(x_k)}+1}\phi_{f(x_k)}
\\
=\left(\ord_k f+1-\frac 1 {\ord_k f +1}\right)\frac{\phi_{f(x_k)}}{\beta_{f(x_k)}+1}     - \left( (f^*\pmb\beta)_k+1+\frac 1 { (f^*\pmb\beta)_k+1}  \right)\log (\ord_k f+1)
\\
+  \left( \frac{\ord_k f+2}{\ord_k f+1} \log|c_k|- \ord_k f \log (\ord_k f+1)  \right).
\ea$$
Here again, the first two terms in the right-hand side contribute to the first and the second sums in the right-hand side of~\eqref{S20STMNT}. The last term goes to $C_f$, cf.~\eqref{C_f}.

Similarly, if $x_k =  \infty$  is not a pole of  $ f$,  we get the terms listed in~\eqref{EasyTerms} and $\frac 1 6$ times
$$
\ba
-\frac{\left(f^*\pmb\beta\right)_k+2} {\left(f^*\pmb\beta\right)_k+1}(f^*\phi)_k - \Bigl( -(f^*\phi)_k (\ord_k f +2)+  ((f^*\pmb\beta)_k+2)  (\log\left |(\ord_k f+1)c_k\right|)   \Bigr)
\\
 -(\ord_k f+1) \frac {\beta_{f(x_k)}}{\beta_{f(x_k)}+1}\phi_{f(x_k)}
\\
=\left( \ord_k f+1-\frac 1 {\ord_k f +1}\right)\frac {\beta_{f(x_k)}}{\beta_{f(x_k)}+1}\phi_{f(x_k)} \qquad\qquad \qquad \qquad\qquad \qquad \qquad\qquad 
\\
  - \left( (f^*\pmb\beta)_k+1+\frac 1 { (f^*\pmb\beta)_k+1}  \right)\log (\ord_k f+1) \qquad\qquad \qquad\qquad
\\
 +\left( -   \frac{\ord_k f+2}{\ord_k f+1} \log|c_k|+ \ord_k f\log (\ord_k f+1)\right) .
\ea
$$
Here, as before, the first two terms in the right-hand side are in agreement with~\eqref{S20STMNT}, and the last term goes to $C_f$, cf.~\eqref{C_f}.

Finally, if  $x_k = \infty$ is a pole of   $f$, we get the terms listed in~\eqref{EasyTerms} and $\frac 1 6$ times
$$
\ba
-\frac{\left(f^*\pmb\beta\right)_k+2} {\left(f^*\pmb\beta\right)_k+1}(f^*\phi)_k - \Bigl(    (f^*\phi)_k \ord_k f  +\bigl( (f^*\pmb\beta)_k +2\bigr) \log\left |(\ord_k f+1)c_k\right|\Bigr)\\+(\ord_k f+1) \frac {\beta_{f(x_k)}+2} {\beta_{f(x_k)}+1}\phi_{f(x_k)} 
\\
=\left(\ord_k f+1-\frac 1 {\ord_k f +1}\right)\frac{\phi_{f(x_k)}}{\beta_{f(x_k)}+1}     - \left( (f^*\pmb\beta)_k+1+\frac 1 { (f^*\pmb\beta)_k+1}  \right)\log (\ord_k f+1)
\\
+\left(-   \frac{\ord_k f}{\ord_k f+1} \log|c_k|- (\ord_k f+2)\log (\ord_k f+1)  \right),
\ea
$$
which is in agreement with~\eqref{S20STMNT} and~\eqref{C_f}. This completes the proof of Theorem~\ref{PBdet}.
\end{proof}

\subsection{Euclidean equilateral triangulation}\label{BFET}

By Proposition~\ref{PBdet}  the constant $C_f$ does not depend on the metric $m_{\pmb\beta}$. In this section, we make a particular choice of $m_{\pmb\beta}$ that significantly simplifies the calculation of the constant $C_f$. As we show in the proof of Proposition~\ref{B_C_f} below, it makes sense to consider the  Euclidean equilateral triangulation naturally associated with Belyi function $f$, cf.~\cite{ShVo,VoSh}.

\begin{proposition}\label{B_C_f} The constant $C_f$ in~\eqref{C_f} satisfies~\eqref{C_F} with $A_f$ from~\eqref{Exp_A}; see also Remark~\ref{AvsCn} at the end of this subsection. 
\end{proposition}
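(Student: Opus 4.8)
The plan is to exploit the metric-independence of $C_f$ established in Proposition~\ref{PBdet}: since $C_f$ depends only on the Belyi function $f$ and not on the divisor $\pmb\beta$, I am free to evaluate the expression~\eqref{C_f} using whichever metric $m_{\pmb\beta}$ renders the computation most transparent. The natural choice, following Shabat--Voevodskii~\cite{ShVo,VoSh}, is the \emph{Euclidean equilateral} case, where every bicolored double triangle is a flat equilateral triangle. Concretely this means taking $\beta_0=\beta_1=\beta_\infty=-2/3$, so that each cone angle equals $2\pi(\beta_j+1)=2\pi/3$ and $|\pmb\beta|=-2$, putting us in the Euclidean regime $|\pmb\beta|+2=0$. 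In this degenerate flat situation the Schwarz triangle function $w_{\pmb\beta}$ maps the half-planes to genuine Euclidean equilateral triangles, and the pullback metric $f^*m_{\pmb\beta}$ becomes the flat metric with conical singularities induced by the equilateral triangulation attached to the \emph{dessin}.

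The key computational input is that for the symmetric choice $\beta_j\equiv-2/3$ one has explicit closed forms for the uniformization data $\phi_j$ appearing in~\eqref{asympt_phi}, obtainable from $\Psi$ (equivalently $\Phi$ of~\cite[Prop.~A.2]{KalvinLast}) specialized to equal arguments. First I would record these explicit values of $\phi_0=\phi_1=\phi_\infty$ together with the resulting $(f^*\pmb\beta)_k=(\ord_k f+1)(1/3)-1$ from~\eqref{orders*}. Next I would substitute $\beta_{f(x_k)}=-2/3$, so $\beta_{f(x_k)}+1=1/3$, into the formula~\eqref{C_f} for $C_f$, combining the four cases (whether $x_k$ is/ is not a pole, and whether $x_k=\infty$) into a single clean sum over $k$. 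The coefficients $c_k$ in~\eqref{C_f} are the leading Taylor/Laurent coefficients of $f(x)-f(x_k)$, and my task is to re-express the combination $\sum_k(\cdots)\log|c_k|$ in terms of the single scaling constant $A_f$ of~\eqref{Exp_A} and the geometric data $\log|x_k-x_\ell|$.

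The mechanism for this re-expression is that in the flat equilateral case the potential $f^*\phi$ is, up to the prescribed logarithmic singularities at the $x_k$, a \emph{globally harmonic} function whose structure is dictated by the explicit flat-metric formula $e^{2f^*\phi}=\text{const}\cdot|f'|^2|f|^{-4/3}|f-1|^{-4/3}$ (the pullback of the flat equilateral model metric). Writing out $f^*\phi$ near each $x_k$ and matching against the asymptotics~\eqref{asympPB1} relates each $(f^*\phi)_k$, hence each $\log|c_k|$, simultaneously to $\log A_f$, to $\log(\ord_k f+1)$, and to the pairwise distances $\log|x_k-x_\ell|$ coming from the factorization $\prod_{k:x_k\neq\infty}|x-x_k|^{\frac13(\ord_k f-2)}$ in~\eqref{Exp_A}. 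Feeding these relations into the collected sum produces exactly the three groups of terms in~\eqref{C_F}: the double sum $\frac1{18}\sum_{k,\ell}\frac{(\ord_k f-2)(\ord_\ell f-2)}{\ord_k f+1}\log|x_k-x_\ell|$, the single sum $\frac16\sum_k\big(\frac{\ord_k f+1}{3}+\frac3{\ord_k f+1}\big)\log(\ord_k f+1)$, and the $\log A_f$ term with coefficient $\frac16(\deg f-\sum_k\frac3{\ord_k f+1})$.

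The main obstacle I anticipate is the bookkeeping that converts the local coefficients $\log|c_k|$ into the global, symmetric quantities $A_f$ and $\{\log|x_k-x_\ell|\}$. Each $c_k$ is the leading coefficient of $f(x)-f(x_k)$ (or of $f$ at a pole), and relating these to the residues/leading behavior encoded in $A_f$ requires carefully tracking how the product $\prod_{k:x_k\neq\infty}(x-x_k)^{\frac13(\ord_k f-2)}$ factors the various local contributions, taking particular care at $x_k=\infty$ and at poles, where the sign conventions in~\eqref{asymp_log_f1}--\eqref{asymp_log_f2} flip. The emergence of the pairwise-distance double sum — which is genuinely nonlocal and has no counterpart in the per-point formula~\eqref{C_f} — is the subtle point: it arises precisely because expressing all $c_k$ through the single uniform constant $A_f$ forces the cross-terms $\log|x_k-x_\ell|$ out of the factorized product. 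Verifying that the exponents assemble into the stated rational coefficient $\frac{(\ord_k f-2)(\ord_\ell f-2)}{18(\ord_k f+1)}$, and that the diagonal and infinite-point contributions are consistent with the Riemann--Hurwitz constraint $|\pmb f|=2\deg f-2$, is where I expect the real work to lie. I would conclude by double-checking the final expression on a low-degree example (e.g.\ the icosahedral $f$ of~\eqref{BFIcos} or a simpler polynomial \emph{dessin}) to confirm the constants, and refer to Remark~\ref{AvsCn} for the reconciliation of the two formulas~\eqref{C_f} and~\eqref{C_F}.
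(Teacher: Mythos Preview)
Your overall strategy---specializing to the Euclidean equilateral case $\beta_0=\beta_1=\beta_\infty=-2/3$---is exactly what the paper does, and your identification of the explicit flat potential $e^{2f^*\phi}=\text{const}\cdot|f'|^2|f|^{-4/3}|f-1|^{-4/3}$ as the key object is correct. However, the paper's execution differs from yours in an important way. You propose to derive~\eqref{C_F} by directly rewriting the per-point formula~\eqref{C_f}: extract a relation for each $\log|c_k|$ by comparing the two forms~\eqref{asympPB1},~\eqref{1} and the explicit flat asymptotics~\eqref{a_s_m},~\eqref{asympA} of $f^*\phi$, then substitute these relations back into~\eqref{C_f} and collect terms. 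The paper instead never manipulates~\eqref{C_f} at all. It computes $\log\det\Delta_{f^*\pmb\beta}$ in \emph{two separate ways} for the equilateral metric: once via the identity~\eqref{S20STMNT} of Proposition~\ref{PBdet} (with $C_f$ left as an unknown, giving~\eqref{auxFeb26}), and once via an independent anomaly formula for the explicit flat metric~\eqref{pb2}, namely~\cite[Prop.~3.3]{KalvinJFA}, which already contains the double sum $\sum_{k,\ell}\log|x_k-x_\ell|$ with the correct coefficients~\eqref{fp-lA}. Equating the two and using the standard rescaling property then solves for $C_f$ and yields~\eqref{C_F} directly. Your route is more self-contained but incurs exactly the heavy bookkeeping you anticipate; the paper's route is shorter precisely because the nonlocal double sum is imported wholesale from the cited flat-metric result rather than coaxed out of~\eqref{C_f} term by term.

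One minor slip in your write-up: the expression~\eqref{C_f} is already $\pmb\beta$-independent, so there is nothing to ``substitute $\beta_{f(x_k)}=-2/3$'' into there; the role of the equilateral choice is solely to furnish the auxiliary relations between the $c_k$, $A_f$, and the mutual distances, as you correctly describe in the following paragraph.
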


\begin{proof} 
 Here we consider the pull-back  by $f$
of the flat singular metric 
$$
m_{\pmb \beta}=c^2 |z|^{-4/3}|z-1|^{-4/3}|dz|^2,\quad \pmb\beta=\left(-\frac 2 3 \right)\cdot 0+\left(-\frac 2 3 \right)\cdot 1+\left(-\frac 2 3 \right)\cdot\infty,
$$
where $c>0$ is a  scaling coefficient that 
 normalizes the area of the metric to one. Note that the surface $(\overline{\Bbb C}_z,m_{\pmb \beta})$ is isometric to two congruent Euclidean equilateral triangles glued together along their sides~\cite{Troyanov,KalvinJGA}. In terms of~\cite{ShVo}, the corresponding bicolored double triangle in Fig.~\ref{TemplateEuc} (with $\beta_0=\beta_1=\beta_\infty=-2/3$) is  a ``butterfly''  that puts the wings together to become $S^2$-like, see also~\cite{VoSh}.

Relying on anomaly formulae we can obtain expressions for the determinants of Laplacians on the base $(\overline{\Bbb C}_z, m_{\pmb \beta})$ and on the ramified covering $(\overline{\Bbb C}_x, f^*m_{\pmb \beta})$. These determinants satisfy the relation~\eqref{S20STMNT} from Theorem~\ref{PBdet}. As we show below, this leads to the equalities~\eqref{C_F} and~\eqref{Exp_A} for $C_f$ and $A_f$ correspondingly, and thus proves the assertion.

The potential $\phi$ of the metric $m_{\pmb\beta}=e^{2\phi}|dz|^2$ has the asymptotics
$$
\phi(x)=-\frac 2 3 \log|z|+\log c+o(1),\quad|z|\to 0;\quad 
\phi(x)=-\frac 2 3 \log|z-1|+\log c +o(1),\quad|z|\to 1;
$$
$$
\phi(x)=-\frac 4 3\log|z|+\log c +o(1),\quad |z|\to \infty. 
$$
This asymptotics is particularly simple, because for the coefficients $\phi_j$  we have $\phi_j=\log c$ (instead of a cumbersome expression with a lot of Gamma functions for $\phi_j$ in~\eqref{asympt_phi}, see~\eqref{eqn_Psi}), and the weights $\beta_j$ of the marked points are $-2/3$.  Moreover, thanks to our special choice of $m_{\pmb\beta}$,  for the Laplacian $\Delta_{\pmb\beta}$ on $(\overline{\Bbb C}_z, m_{\pmb \beta})$ the right hand side of the anomaly formula~\eqref{DetDelta} takes the following particularly  simple form: 
$$
\log {\det\Delta_{\pmb\beta}}=-\frac 4 3 \log c  -3\mathcal C\left(-\frac 2 3\right)+{\bf C}.
$$
This together with the relation~\eqref{S20STMNT} proved in Proposition~\ref{PBdet} gives
\be\label{auxFeb26}
\ba
\log\frac {\det\Delta_{f^*\pmb\beta}}{\deg f } 
 = \deg f\cdot \left(-\frac 4 3\log c +{\bf C}\right)+\frac 1 2  \left(  3\deg f - \sum_{k}\frac 1 {\ord_k f+1}\right) {\log c} 
 \\
   -\frac 1 6 \sum_{k}  \left( \frac{\ord_k f +1} 3 +\frac 3 { \ord_k  f+1}  \right)\log (\ord_k f+1)
 \\
  - \sum_{k}\mathcal C\left(\frac{\ord_k f -2} 3\right)-  (\deg f-1){\bf C}+C_f.
  \ea
\ee

One can think of the surface $(\overline{\Bbb C}_x,f^*m_{\pmb \beta})$ as of the $\deg f$ copies of the flat bicolored double triangle glued along the edges in a way prescribed by $f$.  For the pull-back metric we obtain
\be\label{pb1}
f^*m_{\pmb\beta}=c^2 |f(x)|^{-4/3}|f(x)-1|^{-4/3}|f'(x)|^2 \, |dx|^2. 
\ee
As is well known~\cite{Troyanov}, the flat metric $f^*m_{\pmb\beta}$ can equivalently be written in the standard form 
\be\label{pb2}
f^*m_{\pmb\beta}=c^2 A_f^2  \prod_{k: x_k\neq \infty }|x-x_k|^{\frac 2 3 (\ord_k f-2)}\,|dx|^2.
\ee
Now the representation~\eqref{Exp_A} for the scaling coefficient $A_f$ is an immediate consequence of the equalities~\eqref{pb1} and~\eqref{pb2}.

Clearly, the metric potential $f^*\phi$ of $f^*m_{\pmb\beta}=e^{2f^*\phi}|dx|^2$ obeys the asymptotics
\be\label{a_s_m}
\ba
(f^*\phi)(x)= & \frac{\ord_k f-2} 3 \log|x-x_k|+ \log  cA_f
\\
&  + \sum_{\ell: x_k\neq x_\ell\neq\infty}  \frac {\ord_\ell f-2} 3 \log |x_k-x_\ell|+o(1), \quad x\to x_k\neq \infty,
\ea
\ee

\be\label{asympA}
(f^*\phi)(x)= -\left( \frac {\ord_k f-2}{3} +2\right) \log|x|+ \log cA_f +o(1), \quad x\to x_k=\infty.
\ee  
Therefore, for the Laplacian $\widetilde \Delta_{f^*\pmb\beta}$ induced by the scaled flat  metric
 $$
 (cA_f)^{-2} \cdot f^*m_{\pmb\beta}=\prod_{k: x_k\neq \infty}|x-x_k|^{\frac 2 3 (\ord_j f-2)}\,|dx|^2
 $$  
of total area $\deg f\cdot (cA_f)^{-2}$,  the anomaly formula from~\cite[Prop. 3.3]{KalvinJFA} gives 
\be\label{fp-lA}
\ba
\log\frac {\det\widetilde \Delta_{f^*\pmb\beta}}{\deg f  \cdot (cA_f)^{-2}} 
 =\frac 1 {18}\sum_{ k:x_k\neq \infty}\sum_{\ \ \ell:x_k\neq x_\ell\neq \infty}\frac{(\ord_k f-2)(\ord_\ell f -2)}{\ord_k f+1} \log|x_k-x_\ell|
 \\
 -\sum_{k=1}^n\mathcal C\left(  \frac{\ord_k f-2} 3\right)+{\bf C}.
 \ea
 \ee

  The standard rescaling property of the determinant reads
$$
\log \frac{\det\Delta_{f^*\pmb\beta}}{\deg f }=\log\frac { \det\widetilde \Delta_{f^*\pmb\beta}}{\deg f \cdot (cA_f)^{-2}}-2(\zeta(0)+1)\cdot \log cA_f, 
$$
where
$$
\zeta(0)=-\frac 1 {12}\sum_{k} \left(  \frac{\ord_k f +1} 3 -\frac 3 { \ord_k  f+1}    \right)-1
$$
is the value of the spectral zeta function of $\widetilde \Delta_{f^*\pmb\beta}$ at zero; for details we refer to~\cite[Section 1.2]{KalvinJFA}.

This together with~\eqref{OEQ} allows one to rewrite the equality~\eqref{fp-lA} in the form
$$
\ba
\log\frac {\det\Delta_{f^*\pmb\beta}}{\deg f } 
 =\frac 1 {18}\sum_{ k:x_k\neq \infty}\sum_{\ \ \ell:x_k\neq x_\ell\neq \infty}\frac{(\ord_k f-2)(\ord_\ell f -2)}{\ord_k f+1} \log|x_k-x_\ell|
 \\
+\frac 1 {6}\left(  \deg f-   \sum_{k=1}^n \frac 3 { \ord_k  f+1}    \right)\log cA_f -\sum_{k=1}^n\mathcal C\left(  \frac{\ord_k f-2} 3\right)+{\bf C}.
 \ea
 $$
Substituting this into the left-hand side of~\eqref{auxFeb26},  we finally arrive at~\eqref{C_F}. This completes the proof. 
\end{proof}

\begin{remark}\label{AvsCn}  Let $c_k$ be the coefficient of Taylor or Laurent series from Proposition~\ref{PBdet}. Let $A_f$ be the constant defined in~\eqref{Exp_A}. Then
the asymptotics~\eqref{asympA} together with~\eqref{asympPB1} and~\eqref{1} implies
$$
  A_f=\left\{
\begin{array}{cc}
  (\ord_k f+1)|c_k|^{1/3}, &  \text{ where } k \text{ is such that } x_k=\infty \text{ and } f(\infty)\neq\infty,   \\
   (\ord_k f+1)|c_k|^{-1/3}, &   \text{ where } k \text{ is such that } x_k=\infty \text{ and } f(\infty)=\infty.
\end{array}
\right.
 $$
\end{remark}

\section{Uniformization, Liouville action, and stationary points of the   determinant}\label{Unif}

 Consider a  constant curvature sphere $(\overline{\Bbb C}_x,e^{2\varphi} |dx|^2)$ with conical singularities of order $\beta_k$ located at $x_k\in \overline{\Bbb C}_x$. The parameters $x_1,\dots,x_n$ are called moduli. By the Gauss-Bonnet theorem~\cite{Troyanov}, the (regularized) Gaussian curvature $K$ of   the singular sphere $(\overline{\Bbb C}_x,e^{2\phi} |dx|^2)$ satisfies the equality  $K=2\pi(|\pmb\beta|+2)/S_\varphi$, where $|\pmb\beta|=\sum_k \beta_k$ is the degree of the divisor $\pmb\beta=\sum \beta_k\cdot x_k$, and 
\be\label{Area}
S_\varphi=\int_{\Bbb C} e^{2\varphi}\frac {dx\wedge d\bar x }{-2i}
\ee
is the total surface area of $(\overline{\Bbb C}_x,e^{2\varphi} |dx|^2)$. 

The potential $\varphi$ of the metric $e^{2\varphi} |dx|^2$ is a solution to the Liouville equation
\be\label{LiouvilleEq}
e^{-2\varphi}(-4\partial_x\partial_{\bar x} \varphi)=K, \quad x\in\Bbb C\setminus\{x_1,x_2,\dots,x_n\},
\ee
having the following asymptotics
\be\label{ascoeff_k}
\ba
\varphi(x)=\beta_k\log |x-x_k|+\varphi_k+o(1), \quad x\to x_k\neq\infty,
\\
\varphi(x)=-(\beta_k+2)\log |x|+\varphi_k+o(1), \quad x\to x_k=\infty.
\ea
\ee
The metric potential $\varphi$ and the coefficients $\varphi_k$ in the asymptotics depend on  the  divisor  $\pmb\beta$, i.e. on the moduli $x_k$ and the orders $\beta_k$ of the conical singularities.

 Introduce  the classical stress-energy tensor $T_{\varphi}:=2(\partial_x^2\varphi- (\partial_x \varphi)^2)$ of the Liouville field theory.
 The stress-energy tensor is a meromorphic function on $\overline{\Bbb C}$  satisfying
\be\label{SET}
\ba
T_\varphi(x)=& \sum_{k: x_k\neq \infty}\left(\frac {{ s}_k}{2(x-x_k)^2}  +\frac { h_k}{x-x_k} \right),\\
 T_\varphi(x)=& \frac {{s}_k}{2x^2}+\frac { h_k}{x^3}+O(x^{-4})\text{ as } x\to x_k=\infty.
\ea
\ee
 Here $ s_k=-\beta_k(2+\beta_k)$ are the weights of the second order poles, and $h_k$ are the famous accessory parameters, e.g.~\cite{He2,Kra,T-Z} and references therein.  Note that the meromorphic quadratic differential $T_\varphi dx^2$ is a uniformizing projective connection compatible with the divisor $\pmb\beta$~\cite{Troyanov,TroyanovSp}. 
 
Recall that one of the approaches to the uniformization consists of finding appropriate values of the accessory parameters $ h_k$, and two appropriately normalized linearly independent solutions  $u_1$ and $u_2$ to the Fuchsian differential equation
 $$
 \partial_x^2 u+\frac 1 2 T_{\varphi} u=0.
 $$
 Then the metric potential $\varphi$ can be found in the form
 $$
 \varphi=\log 2 +\log |\partial_xw|-\log (1+K|w|^2),
 $$
where  $w=u_1/u_2$ is an analytic in $\Bbb C\setminus\{x_1,\dots,x_n\}$ function called the developing map. It satisfies the Schwarzian differential equation $\{w,x\}=T_\varphi(x)$,  where $\{w,x\}=\frac{2w' w'''-3w''^2}{2w'^2}$ is the Schwarzian derivative. However, the accessory parameters can be determined in some special cases only, and, in general, they remain elusive.

It turns out that in the geometric setting of this paper, the accessory parameters can be found explicitly in terms of the Belyi function $f$ and the orders $\beta_0$, $\beta_1$, and $\beta_\infty$ of the conical singularities of the metric $m_{\pmb\beta}=e^{2\phi}|dz|^2$. 
Indeed, consider the constant curvature unit area singular  sphere $(\overline{\Bbb C}_z, m_{\pmb\beta})$, see Sec.~\ref{PrelimTriangulation}. 
For the corresponding stress-energy tensor we have
$$
T_\phi(z)=\frac {\mathfrak s_0}{2z^2}+\frac { \mathfrak h_0}{z}+\frac {\mathfrak s_1}{2(z-1)^2}+\frac {\mathfrak h_1}{z-1},\quad 
T_\phi(z)=\frac {\mathfrak s_\infty}{2z^2}+\frac{\mathfrak h_\infty}{z^3}+O(z^{-4})\quad\text{as}\quad z\to\infty,
$$
where 
\be\label{weight_s}
\mathfrak s_k=-\beta_k(2+\beta_k), \quad k\in\{0,1,\infty\}.
\ee
The accessory parameters
\be\label{S_AP}
\mathfrak h_0=-\mathfrak h_1=\frac{{\mathfrak s}_0+{\mathfrak s}_1-{\mathfrak s}_\infty}{2}, \quad
\mathfrak h_\infty=\frac{{\mathfrak s}_1+{\mathfrak s}_\infty-{\mathfrak s}_0}{2}
\ee
 were first found by Schwarz.  The stress-energy tensors
satisfy the relation
\be\label{relation1727}
T_{f^*\phi}=(T_\phi\circ f) (f')^2+\{f,x\}.
\ee
As a consequence, we obtain the following simple result:
\begin{lemma}[Accessory parameters]
\label{AccPar} Let $f: \overline {\Bbb C}_x \to \overline {\Bbb C}_z$ be a Belyi function unramified outside of the set $\{0,1,\infty\}$ and  such that $f(\infty)\in\{0,1,\infty\}$. Then the stress-energy tensor $T_{f^*\phi}$ of the pull back metric $f^*m_\beta=e^{f^*\phi}|dx|^2$ of $m_\beta$  by $f$  
satisfies the relations~\eqref{SET}, where $x_1,\dots,x_n$ with $n=\deg f +2$ are the preimages of the points $\{0,1,\infty\}\subset\overline{\Bbb C}_z$ under $f$, the weights of the second order poles in~\eqref{SET} are given by the equalities
$$
s_k=-(f^*\pmb\beta)_k ((f^*\pmb\beta)_k +2)
$$
with orders $(f^*\pmb\beta)_k$ from~\eqref{orders*}, and the accessory parameters $h_k$ can be found  as follows:

1. If  $x_k$ is not a pole of $f$, then the accessory parameter $h_k$ satisfies 
\be\label{AP1}
h_k=
\left\{
\begin{array}{cc}
   \frac {d_k}{c_k}{\mathfrak s}_{f(x_k)}+  c_k  \mathfrak h_{f(x_k)}&    \text{ for }\quad  \ord_k f =0, \\
-    \frac {d_k}{c_k}  \frac {   (f^*\beta)_k ((f^*\beta)_k+2) }{ \ord_k f+1}    &    \text{ for }\quad \ord_k f >0,
\end{array}
\right.
\ee
where $\mathfrak s_k$ and $\mathfrak h_k$  are the same as in~\eqref{weight_s} and~\eqref{S_AP}, and   the coefficients $c_k$ and $d_k$ are those from the first or the second  expansion
$$
\ba
f(x)-f(x_k) & =c_k x^{-\ord_k f-1}\left(1+\frac{d_k}{c_k} \frac 1 x + O(x^{-2})\right), \quad x\to x_k=\infty,
\\
f(x)-f(x_k) & =c_k(x-x_k)^{\ord_k f+1}\left(1+\frac {d_k}{c_k}(x-x_k)+O\left(   (x-x_k)^{2}\right)\right),\quad x\to x_k\neq \infty,
\ea
$$
depending on whether $x_k=\infty$ or $x_k\neq\infty$,

2.    If  $x_k$ is a pole of $f$, then the accessory parameter $h_k$ satisfies  
\be\label{AP2}
h_k=
\left\{
\begin{array}{cc}
  -\frac {d_k}{c_k}{\mathfrak s}_{\infty}+\frac{1}{ c_k} \mathfrak h_{\infty} &    \text{ for }\quad  \ord_k f =0, \\
\frac {d_k}{c_k}  \frac {   (f^*\beta)_k ((f^*\beta)_k+2) }{ \ord_k f+1}   &    \text{ for }\quad \ord_k f >0,
\end{array}
\right.
\ee
where  $ \mathfrak s_\infty$ and $\mathfrak h_\infty$ are the same  in~\eqref{weight_s} and~\eqref{S_AP}, and  the coefficients $c_k$ and $d_k$ are those from the first or the second expansion
$$
\ba
f(x)& =c_k x^{\ord_k f+1}\left(1+\frac {d_k}{c_k} \frac 1 x+ O(x^{-2})\right), \quad x\to x_k=\infty,
\\
f(x)&={c_k}{(x-x_k)^{-\ord_k f-1}}\left(1+\frac  {d_k}{c_k} (x-x_k)+O((x-x_k)^{2})\right),\quad x\to x_k\neq \infty,
\ea
$$
depending on whether $x_k=\infty$ or $x_k\neq\infty$.  
\end{lemma}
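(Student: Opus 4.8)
The plan is to extract both the weights $s_k$ and the accessory parameters $h_k$ directly from the transformation law \eqref{relation1727}, namely $T_{f^*\phi}=(T_\phi\circ f)(f')^2+\{f,x\}$, by expanding its right-hand side in a local holomorphic coordinate $t$ at each marked point $x_k$ (with $t=x-x_k$ when $x_k\neq\infty$ and $t=1/x$ when $x_k=\infty$) and matching the Laurent principal part against the prescribed form \eqref{SET}. Since the target tensor $T_\phi$ is completely explicit---its only singularities are the double poles with weights $\mathfrak s_j$ from \eqref{weight_s} and the simple poles with the Schwarz accessory parameters $\mathfrak h_j$ from \eqref{S_AP} at $z\in\{0,1,\infty\}$---the coefficient of $t^{-2}$ on the right yields $s_k$ and the coefficient of $t^{-1}$ yields $h_k$ (the identification at $x_k=\infty$ being legitimate because $\{1/x,x\}=0$, so that $T_{f^*\phi}$ transforms as a genuine projective connection compatible with the $\infty$-normalization of \eqref{SET}). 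Writing $m=\ord_k f+1$ for the local degree, the whole lemma reduces to a finite Laurent expansion carried only up to the $t^{-1}$ term.

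Two ingredients feed this expansion. First, from the stated Taylor/Laurent expansions of $f-f(x_k)$ (or of $f$ at a pole) I would compute $(f')^2$ (with $'=d/dt$ in the chosen coordinate) and the relevant negative powers of $f-f(x_k)$ (resp. of $f$) to subleading order; the pullback term $(T_\phi\circ f)(f')^2$ then contributes $\tfrac12\mathfrak s_{f(x_k)}\,m^2\,t^{-2}$ to the double pole, together with a subleading term $\pm\mathfrak s_{f(x_k)}\,m\,\tfrac{d_k}{c_k}\,t^{-1}$ and, crucially, a contribution from the simple pole $\mathfrak h_{f(x_k)}$ of $T_\phi$ that behaves like $t^{m-2}$ and therefore produces a $t^{-1}$ term \emph{only} when $m=1$, i.e. when $\ord_k f=0$. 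Second, using the M\"obius invariance of the Schwarzian I would reduce $\{f,x\}$ to $\{t^{\pm m}(1+\tfrac{d_k}{c_k}t+\cdots),t\}$, which expands as $\tfrac{1-m^2}{2}\,t^{-2}\mp\tfrac{m^2-1}{m}\tfrac{d_k}{c_k}\,t^{-1}+O(1)$. Adding the two ingredients, the $t^{-2}$ terms combine to $\tfrac12\bigl(\mathfrak s_{f(x_k)}m^2+1-m^2\bigr)$, and the identity $(f^*\pmb\beta)_k=m(\beta_{f(x_k)}+1)-1$ from \eqref{orders*} together with $\mathfrak s_{f(x_k)}=-\beta_{f(x_k)}(\beta_{f(x_k)}+2)$ turns this into $s_k=-(f^*\pmb\beta)_k((f^*\pmb\beta)_k+2)$, confirming the weights.

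For the accessory parameters the $t^{-1}$ coefficients are assembled in the same way, and the dichotomy $\ord_k f=0$ versus $\ord_k f>0$ is exactly the statement that the Schwarz simple pole $\mathfrak h_{f(x_k)}$ is transported unramified ($m=1$) or annihilated by the ramification ($m\geq 2$). When $\ord_k f=0$ the Schwarzian correction vanishes (since $m^2-1=0$) and only the $\mathfrak s$- and $\mathfrak h$-terms survive, giving the linear-in-$\mathfrak h$ formulae in the first lines of \eqref{AP1} and \eqref{AP2}; when $\ord_k f>0$ the $\mathfrak h$-term drops out and the $\mathfrak s$-term combines with the Schwarzian correction to $\pm\tfrac{d_k}{c_k}\tfrac{s_k}{m}=\mp\tfrac{d_k}{c_k}\tfrac{(f^*\pmb\beta)_k((f^*\pmb\beta)_k+2)}{\ord_k f+1}$, which is the second line of \eqref{AP1}, \eqref{AP2}. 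I would then run this through the four cases---$x_k\neq\infty$ a non-pole, $x_k\neq\infty$ a pole, $x_k=\infty$ a non-pole, $x_k=\infty$ a pole---reading the coefficients $c_k,d_k$ off the appropriate one of the four expansions quoted in the statement; each case is a short Laurent computation.

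The main obstacle I anticipate is the bookkeeping at $x_k=\infty$ together with the sign conventions. At infinity $h_k$ is the Laurent coefficient of $x^{-3}$ in the $\infty$-normalization of \eqref{SET}, not a residue, so the computation must be transported through the coordinate change $t=1/x$; the clean enabling fact is $\{1/x,x\}=0$, which makes $T_{f^*\phi}$ expressed in $t$ acquire a genuine simple pole whose coefficient is exactly the $h_k$ of the $\infty$-convention. This coordinate change is also the source of the sign reversals between the pole and non-pole cases, since the leading exponent switches between $t^{+m}$ and $t^{-m}$, flipping the sign of both the $\mathfrak s$-term and the Schwarzian correction. One must further be careful to use the correct local model of $T_\phi$: the $z=\infty$ expansion $T_\phi=\tfrac{\mathfrak s_\infty}{2z^2}+\tfrac{\mathfrak h_\infty}{z^3}+O(z^{-4})$ whenever $f(x_k)=\infty$, and the double/simple pole at $z=f(x_k)$ otherwise. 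Once these conventions are pinned down, the four resulting $t^{-1}$ coefficients reproduce \eqref{AP1} and \eqref{AP2} verbatim.
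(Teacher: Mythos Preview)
Your proposal is correct and follows essentially the same route as the paper: both arguments feed the explicit target tensor $T_\phi$ into the transformation law~\eqref{relation1727}, expand $(T_\phi\circ f)(f')^2$ and $\{f,x\}$ to the needed order near each $x_k$, and read off $s_k$ and $h_k$ from the coefficients of the two lowest Laurent terms. The paper carries out one of the four cases ($x_k=\infty$, $f(x_k)\neq\infty$) in full and declares the remaining three ``similar''; your write-up packages all four cases through the single local coordinate $t$ and the observation $\{1/x,x\}=0$, which is a slightly cleaner way of organizing the same computation but not a genuinely different argument.
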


\begin{proof} If $f(x_k)\neq \infty$ and $x_k= \infty$, then
$$
f(x)-f(x_k)=c_k x^{-\ord_k f-1} \left(1+\frac {d_k}{c_k} \frac 1 x + O(x^{ -2})\right), \quad x\to x_k=\infty,
$$
and the asymptotics can be differentiated. As a consequence, for the contributions into
$$
(T_\phi\circ f) (f')^2 (x)=\left(\frac {-\beta_{f(x_k)}(2+\beta_{f(x_k)})}{2(f(x)-f(x_k))^2}+\frac {\mathfrak h_{f(x_k)}}{f(x)-f(x_k)}+O(1)\right)(f'(x))^2
$$
we obtain
$$
\frac {(f'(x))^2}{(f(x)-f(x_k))^2}=\frac {(\ord_k f+1)^2} {x^{2}}+2\frac {d_k}{c_k} \frac {\ord_k f+1}  {x^3} +O(x^{-4}),
$$
$$
\frac {h_{f(x_k)}}{f(x)-f(x_k)}(f'(x))^2=h_{f(x_k)}{c_k} (\ord_k f+1)^2x^{-\ord_k f -3}+O(x^{-4}).
$$
Besides, for the Schwarzian derivative, we get
$$
\{f,x\}= - \frac 1 2 (\ord_k f) (\ord_k f+2)\left(    \frac {1}{x^{2} }   + \frac {2d_k}{c_k( \ord_k f+1)}   \frac {1} {x^3} +O(x^{-4}) \right),\quad x\to x_k=\infty.
$$
These together with~\eqref{relation1727} imply
$$
T_{f^*\phi}(x)=-\frac { (f^*\beta)_k( (f^*\beta)_k+2)}{2x^2}+\frac {h_k}{x^3}+O(x^{-4})\quad\text{as}\quad x\to x_k=\infty,
$$
 where the accessory parameter $h_k$ satisfies~\eqref{AP1}.

Similarly, if $f(x_k)=\infty$ and $x_k= \infty$, then starting with the asymptotics
$$
f(x)=c_k x^{\ord_k f+1}\left(1+\frac {d_k}{c_k} \frac 1 x+ O(x^{-2})\right), \quad x\to x_k=\infty,
$$
we arrive at 
$$
T_{f^*\phi}(x)=-\frac { (f^*\beta)_k( (f^*\beta)_k+2)}{2x^2}+\frac {h_k}{x^3}+O(x^{-4})\quad\text{as}\quad x\to x_k=\infty,
$$
 where the accessory parameter $h_k$ satisfies~\eqref{AP2}. 

The cases $f(x_k)=\infty$ with $x_k\neq \infty$, and $f(x_k)\neq \infty$ with $x_k\neq \infty$ are  similar, we omit the details. 
\end{proof}

 \begin{remark} In particular, by using Lemma~\ref{AccPar} with $\pmb \beta=\beta_0\cdot 0+\beta_1\cdot 1+\left(-\frac 2 3 \right)\cdot \infty$ and $f(x)=x^3$, we recover the values of the accessory parameters found in~\cite[Sec. 4.13]{Kra} for a constant curvature Hyperbolic or Euclidean metric with four conical singularities located at the vertices of a regular tetrahedron. 
  \end{remark}

Introduce the Liouville action 
\be\label{LA}
\mathcal S_{\pmb \beta}[\varphi]=2\pi(|\pmb\beta|+2) \left (\frac 1 {S_\varphi} \int_{\Bbb C} \varphi e^{2\varphi}\frac  {dx\wedge d\bar x}{-2 i}   -1 \right) +2\pi\sum_{k}\beta_k\varphi_k+4\pi \varphi_k|_{k: x_k=\infty},
\ee
where $S_\varphi$ is the total area of the singular sphere $(\overline{\Bbb C}_x, e^{2\varphi}|dx|^2)$, and  $\varphi_k$ are the coefficients in the asymptotics~\eqref{ascoeff_k}. As is demonstrated in~\cite{KalvinLast}, this new definition of the Liouville action is in agreement, for instance, with that in~\cite {CMS,Z-Z,T-Z}. It is not hard to show that the Liouville equation~\eqref{LiouvilleEq} is the Euler-Lagrange equation for the Liouville action functional $\psi\mapsto \mathcal S_{\pmb \beta}[\psi]$.

\begin{remark}
In the geometric setting of this paper we have $\varphi=f^*\phi$, and the anomaly formula from Lemma~\ref{ANOMALY} can equivalently be written as follows:
$$
\log \frac {\det\Delta_{f^*\pmb\beta}}{\deg f}   =\frac {|f^*\pmb\beta|+2}{6} -\frac 1 {12 \pi}\left(\mathcal S_{f^*\pmb\beta} [f^*\phi]-\pi\log \mathcal H_{f^*\pmb\beta} [f^*\phi]\right)-\sum_{k} \mathcal C((f^*\beta)_k) +{\bf C}.
$$
Here  the functional $\mathcal H_{f^*\pmb\beta} [f^*\phi]$ is defined explicitly via the equality
$$
\mathcal H_{f^*\pmb\beta}   [f^*\phi]= \exp\left(2\sum_k \frac {(f^*\beta)_k((f^*\beta)_k+2)}{(f^*\beta)_k+1} (f^*\phi)_k\right)
$$ 
with $(f^*\beta)_k$ from~\eqref{orders*} and $(f^*\phi)_k$ from~\eqref{1}.
Thus, as a consequence of  the explicit expression for the determinant of Laplacian in Theorem~\ref{main} and the anomaly formula in Lemma~\ref{ANOMALY}, one also obtains an explicit expression for the Liouville action~$\mathcal S_{f^*\pmb\beta} [f^*\phi]$, cf.~\cite{P-T-T,ParkTeo,T-T}. It would be interesting to check if this result can be reproduced by using conformal blocks~\cite{Z-Z}. 
\end{remark}

In the remaining part of this section for simplicity, we assume that the orders $\beta_k$ of the conical singularities meet the condition  $|\pmb\beta|\leq -2$, i.e. we exclude form consideration the spherical metrics. This allows us to differentiate the hyperbolic metric potential and the corresponding Liouville action with respect to $x_k$ and $\beta_k$ relying on known regularity results, e.g.~\cite{KimWilkin,Kra,SchuTrap1,SchuTrap2,T-Z}. In the Euclidean case, we have $|\pmb\beta|=-2$, and the metrics can be written explicitly~\cite{TroyanovSSC}, which immediately justifies the differentiation. Let us also note that there are good grounds to believe~\cite{Judge,Judge2,KalvinLast,KimWilkin,TroyanovSp} that the potential $\varphi$ of a constant curvature metric is necessarily a real-analytic function of the orders of conical singularities on the existence and uniqueness set $$\{\beta_k\in(0,1): \beta_k-|\pmb\beta|/2>0, k=1,\dots,n\},$$ and the results below remain valid on that set.

 Next, we show that the Liouville action $\mathcal S_{\pmb \beta}[\varphi]$ generates the accessory parameters $h_k$ as their common antiderivative.  
 \begin{lemma}[After P. Zograf and  L. Takhtajan] Assume that  $\beta_k\in(0,1)$,  $k=1,\dots, n$, and  $|\pmb\beta|\leq -2$. Let  $\varphi$ be a (unique) solution to the Liouville equation~\eqref{LiouvilleEq}   satisfying the area condition~\eqref{Area} with some fixed $S_\varphi>0$, and having the asymptotics~\eqref{ascoeff_k}.  Then the Liouville action~\eqref{LA} meets the equalities
\be\label{TZeqn}
-\frac 1 {2\pi}{\partial_{x_k} \mathcal S_{\pmb \beta}[\varphi]}=h_k, \quad k=1,\dots, n, 
\ee
where $x_1,x_2,\dots, x_n$ are the moduli, and  $h_1,h_2,\dots,h_n$ are the accessory parameters.

Note that in the geometric setting of this paper, we have $\varphi=f^*\phi$. Thus the moduli $x_k$, $k=1,\dots,\deg f +2 $,  are the preimages of the points $\{0,1,\infty\}$ under $f$, and the accessory parameters  $h_k$  are those found in Lemma~\ref{AccPar}.
\end{lemma}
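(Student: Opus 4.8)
The plan is to follow the variational argument of Takhtajan and Zograf, adapted to the conical setting. The guiding observation is the one recorded just before the lemma: the Liouville equation~\eqref{LiouvilleEq} is the Euler--Lagrange equation of $\psi\mapsto\mathcal S_{\pmb\beta}[\psi]$. Hence, evaluated on the solution $\varphi$, the \emph{interior} (functional) part of $\partial_{x_k}\mathcal S_{\pmb\beta}[\varphi]$ should collapse and the derivative should be expressible purely through data localized at the moduli. The only reason this is not immediate is that $u:=\partial_{x_k}\varphi$ is singular at $x_k$: moving the cone point forces $u(x)\sim-\beta_k\,\partial_x\log|x-x_k|=-\beta_k/\bigl(2(x-x_k)\bigr)$ as $x\to x_k$, so the naive ``interior variation vanishes'' must be made precise by regularization, and it is exactly this surviving singular contribution that produces the accessory parameter.

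First I would regularize. Remove small disks $D_\epsilon(x_j)$ about every modulus (and, when $x_k=\infty$, treat the point at infinity through the complementary large disk), integrate over $\Bbb C_\epsilon:=\Bbb C\setminus\bigcup_j\overline{D_\epsilon(x_j)}$, and let $\mathcal S^\epsilon_{\pmb\beta}[\varphi]$ denote the corresponding truncation of~\eqref{LA}, so that $\mathcal S_{\pmb\beta}[\varphi]=\lim_{\epsilon\to0}\mathcal S^\epsilon_{\pmb\beta}[\varphi]$. Differentiating $\mathcal S^\epsilon_{\pmb\beta}[\varphi]$ in $x_k$ produces three kinds of terms: a bulk integral $K\int_{\Bbb C_\epsilon}(1+2\varphi)\,u\,e^{2\varphi}\,dA$ (with $K=2\pi(|\pmb\beta|+2)/S_\varphi$, $dA$ the Euclidean area element, and $S_\varphi$ held fixed by~\eqref{Area}), a contribution from the moving contour $\partial D_\epsilon(x_k)$, and the differentiated localized terms $2\pi\sum_j\beta_j\,\partial_{x_k}\varphi_j$ together with the $\infty$--term. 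Using the Liouville equation $-4\partial_x\partial_{\bar x}\varphi=Ke^{2\varphi}$ and the linearized equation $-4\partial_x\partial_{\bar x}u=2Ke^{2\varphi}u$ obtained by differentiating it, and the identity $\partial_x\partial_{\bar x}(\varphi u)=u\,\partial_x\partial_{\bar x}\varphi+\varphi\,\partial_x\partial_{\bar x}u+\partial_x\varphi\,\partial_{\bar x}u+\partial_{\bar x}\varphi\,\partial_x u$, the bulk integrand rearranges into a total divergence plus gradient pairings, so integration by parts collapses the bulk term to contour integrals over the small circles $\partial D_\epsilon(x_j)$ built from $\varphi$, $u$, and their normal derivatives $\partial_{\vec n}$.

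Next I would evaluate these contour integrals as $\epsilon\to0$ using the expansions~\eqref{ascoeff_k} for $\varphi$ and the induced expansion of $u$. On the circle about $x_k$, the simple pole of $u$ pairs against the regular part of $\partial_x\varphi$; extracting the coefficient of $(x-x_k)^{-1}$ and comparing with $T_\varphi=2\bigl(\partial_x^2\varphi-(\partial_x\varphi)^2\bigr)$ and the pole structure~\eqref{SET} identifies the resulting finite residue with the accessory parameter $h_k$. The circles about $x_j$ with $j\neq k$ produce terms that in the limit cancel against the differentiated localized terms $2\pi\beta_j\,\partial_{x_k}\varphi_j$, while the powers of $\epsilon$ and the $\log\epsilon$ divergences on every circle cancel against the counterterms built into $\mathcal S^\epsilon_{\pmb\beta}$ and against the moving-contour contribution at $x_k$. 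What survives all cancellations is $-2\pi h_k$, which is~\eqref{TZeqn}. Throughout, the $x_k$-differentiations are legitimate: for $|\pmb\beta|\le-2$ the regularity of $\varphi$ and of the coefficients $\varphi_j$ in the moduli is guaranteed by the cited results, and in the Euclidean case $|\pmb\beta|=-2$ the metric is explicit, justifying differentiation directly.

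The main obstacle is precisely the $\epsilon\to0$ bookkeeping in the last step: one must track the subleading coefficients in the expansions of both $\varphi$ and $u$ at every puncture, verify that the $j\neq k$ contours cancel against $2\pi\beta_j\,\partial_{x_k}\varphi_j$ and that all divergent powers of $\epsilon$ and of $\log\epsilon$ drop out, and then isolate the single finite term. The conceptual content of this cancellation is the relation between the first regular Taylor coefficient of $\partial_x\varphi$ at $x_k$ and the residue $h_k$ of $T_\varphi$; once this is pinned down, the remainder is the same kind of organized matching of local expansions that underlies the anomaly computation in Lemma~\ref{ANOMALY} and in~\cite{KalvinLast,KalvinJFA}.
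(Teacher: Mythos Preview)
Your approach is correct in spirit but takes a genuinely different route from the paper. You sketch a direct variational proof of Polyakov's conjecture from scratch---regularizing the action, integrating by parts using the Liouville equation and its linearization, and identifying the surviving contour contribution at $x_k$ with the accessory parameter via the pole structure~\eqref{SET} of the stress-energy tensor. The paper instead proceeds in three short moves: (i) it observes that the Liouville action~\eqref{LA} is an equivalent regularization of the one in~\cite{CMS,Z-Z,T-Z}, so for $|\pmb\beta|<-2$ and $K=-1$ the statement is literally the theorem already proved in~\cite{CMS,T-Z,T-Z1}; (ii) it extends to arbitrary $S_\varphi>0$ by the rescaling $\varphi\mapsto\varphi+\log C$, noting that $T_{\varphi+\log C}=T_\varphi$ while $\mathcal S_{\pmb\beta}[\varphi+\log C]-\mathcal S_{\pmb\beta}[\varphi]=4\pi(|\pmb\beta|+2)\log C$ is independent of the moduli, so~\eqref{TZeqn} is unaffected; (iii) for the flat case $|\pmb\beta|=-2$ it uses the explicit potential~\eqref{ECSC} and a direct computation. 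Your argument is self-contained and explains \emph{why} the accessory parameters appear, at the cost of the delicate $\epsilon\to0$ bookkeeping you yourself flag as the main obstacle; the paper's argument is much shorter because it delegates that analysis to the existing literature and reduces everything else to a scaling invariance plus an explicit check.
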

\begin{proof} As is shown in~\cite{KalvinLast}, the expression in the right hand side of~\eqref{LA} is an equivalent regularization of the Liouville action introduced in~\cite{CMS,Z-Z,T-Z}. Hence, in the case of $|\pmb\beta|<-2$ and  $K=-1$  the assertion of the lemma is just a reformulation of the result proven in~\cite{CMS,T-Z}, see also~\cite{T-Z1} for the first proof of Polyakov's conjecture~\eqref{TZeqn}. Next we show that in the (hyperbolic) case $|\pmb\beta|<-2$ the assertion remains valid for any $S_\varphi>0$ and $K=2\pi(|\pmb\beta|+2)/S_\varphi<0$.

Consider a (unique) metric potential $\varphi$ such that $|\pmb\beta|<-2$ and  $K=-1$. Clearly,  $S_\varphi=-2\pi(|\pmb\beta|+2)$, and for any $C>0$ we have the following transformation laws for the surface area, the Liouville action, and the stress-energy tensor:
$$
S_{\varphi+\log C}=C^2 S_\varphi, \quad \mathcal S_{\pmb \beta}[\varphi+\log C]=\mathcal S_{\pmb \beta}[\varphi]  +4\pi(|\pmb\beta|+2)\log C,\quad T_{\varphi+\log C}=T_\varphi.
$$
This implies that the rescaling $\varphi\mapsto \varphi+\log C$ multiplies the total area by $C^2$, but does not affect the equalities~\eqref{TZeqn}.  
Thus, in the case $|\pmb\beta|<-2$, the assertion of lemma is valid for any fixed $S_\varphi>0$.

In the case $|\pmb\beta|=-2$  the integral term in~\eqref{LA} disappears and the metric $e^{2\varphi}|dx|^2$ is flat. As is known~\cite{TroyanovSSC}, up to a rescaling  $\varphi\mapsto \varphi+\log C$,  the potential $\varphi$ can be written explicitly in the form
\be\label{ECSC}
\varphi(\pmb\beta)=\sum_{k: x_k\neq \infty}\beta_k\log|x-x_k|,\quad |\pmb\beta|=-2.
\ee
As a result, the equality~\eqref{TZeqn} follows from a simple direct computation. 
\end{proof}

 It may also be possible to prove Polyakov's conjecture~\eqref{TZeqn} for the spherical case $|\pmb\beta|>-2$  along the lines of~\cite{CMS,T-Z}, however this goes out of the scope of this paper.

\begin{lemma}[After A. Zamolodchikov and Al. Zamolodchikov]\label{Z^2} Assume that $\beta_k\in (-1,0)$ and $|\pmb\beta|\leq -2$.  Let $\varphi$ stand for a (unique) solution to the Liouville equation~\eqref{LiouvilleEq}   satisfying the area condition~\eqref{Area} with a fixed $S_\varphi>0$, and having the asymptotics~\eqref{ascoeff_k}. Then for any fixed configuration $x_1,\dots,x_n$  the Liouville action~\eqref{LA} satisfies
\be\label{ZZeqn}
-\frac 1 {2\pi} {\partial_{\beta_k} \mathcal S_{\pmb \beta}[\varphi]}=1-2 \varphi_k,
\ee
where $\varphi_k$ is the coefficient  in the corresponding asymptotics~\eqref{ascoeff_k}.

In the geometric setting of this paper, we have $\varphi=f^*\phi$ and  $\varphi_k=(f^*\phi)_k$, see~\eqref{1}.
\end{lemma}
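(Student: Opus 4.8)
The plan is to reduce \eqref{ZZeqn} to the hyperbolic case $K=-1$ settled by Zamolodchikov and Zamolodchikov, mirroring the way the preceding lemma reduced \eqref{TZeqn} to the Takhtajan--Zograf computation, and then to reach the Euclidean locus by continuity. The essential new feature, absent from the moduli derivative \eqref{TZeqn}, is that holding the area $S_\varphi$ fixed while varying $\beta_k$ forces the curvature $K=2\pi(|\pmb\beta|+2)/S_\varphi$ to vary; hence the rescaling that re-normalizes the area itself depends on $\beta_k$, and, as I will show, this dependence contributes precisely the constant term $1$ in $1-2\varphi_k$.

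I would first record, exactly as in the previous proof, the behaviour under a constant shift $\varphi\mapsto\varphi+\log C$ with $C>0$: from \eqref{Area} and \eqref{ascoeff_k}, $S_{\varphi+\log C}=C^2S_\varphi$ and $\varphi_k\mapsto\varphi_k+\log C$ for every $k$, while \eqref{LA} gives $\mathcal S_{\pmb\beta}[\varphi+\log C]=\mathcal S_{\pmb\beta}[\varphi]+4\pi(|\pmb\beta|+2)\log C$. For the base case $|\pmb\beta|<-2$, $K=-1$, the natural area is $S_0=-2\pi(|\pmb\beta|+2)$ and \eqref{ZZeqn} amounts to the classical identity $\partial_{\beta_k}\mathcal S_{\pmb\beta}[\varphi^{(-1)}]=4\pi\varphi_k^{(-1)}$ for the $K=-1$ potential $\varphi^{(-1)}$, the derivative being taken along the $K=-1$ family. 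This is the Zamolodchikov--Zamolodchikov computation in the normalization of \eqref{LA}: by the Euler--Lagrange (Liouville) equation \eqref{LiouvilleEq} the bulk part of the first variation vanishes, so the variation localizes at the cone points; since $\partial_{\beta_k}\varphi^{(-1)}$ develops the logarithmic singularity $\log|x-x_k|$ at $x_k$ (with coefficient $\partial_{\beta_k}\beta_k=1$) and remains bounded at the other $x_j$, only the contour around $x_k$ survives, and evaluating it against \eqref{ascoeff_k} returns the coefficient $\varphi_k^{(-1)}$. I would either cite this result or reproduce it by the Green's-identity manipulation already used in Lemma~\ref{INT} and Lemma~\ref{ANOMALY}.

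With the base case in hand, I assemble the general statement by rescaling. Fixing $S_\varphi$ and writing $\varphi=\varphi^{(-1)}+\log C$ with $C=(S_\varphi/S_0)^{1/2}$, so that $\log C=\tfrac12\log S_\varphi-\tfrac12\log\bigl(-2\pi(|\pmb\beta|+2)\bigr)$ and therefore $\partial_{\beta_k}\log C=-\tfrac1{2(|\pmb\beta|+2)}$, I differentiate the transformation law $\mathcal S_{\pmb\beta}[\varphi]=\mathcal S_{\pmb\beta}[\varphi^{(-1)}]+4\pi(|\pmb\beta|+2)\log C$ in $\beta_k$ and insert the base case, obtaining $\partial_{\beta_k}\mathcal S_{\pmb\beta}[\varphi]=4\pi\varphi_k^{(-1)}+4\pi\log C+4\pi(|\pmb\beta|+2)\partial_{\beta_k}\log C=4\pi\varphi_k^{(-1)}+4\pi\log C-2\pi$. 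Since $\varphi_k^{(-1)}=\varphi_k-\log C$, the right-hand side equals $4\pi\varphi_k-2\pi=-2\pi(1-2\varphi_k)$, which is exactly \eqref{ZZeqn}. The Euclidean case $|\pmb\beta|=-2$ then follows by continuity (equivalently, real-analyticity) in $\beta_k$, using the regularity of $\varphi$ and of the coefficients $\varphi_k$ in the orders of the conical singularities recalled just before the lemma; note that here, unlike for \eqref{TZeqn}, one cannot simply insert the explicit flat potential \eqref{ECSC}, because varying $\beta_k$ displaces the parameter off the Euclidean locus $|\pmb\beta|=-2$.

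The step I expect to be the main obstacle is the base case: transplanting the Zamolodchikov--Zamolodchikov variation into the regularization \eqref{LA} and controlling the regularized boundary integral at $x_k$, where $\partial_{\beta_k}\varphi$ is logarithmically singular, carefully enough to pin down both its coefficient and the normalization that produce exactly $4\pi\varphi_k^{(-1)}$. Once that constant is secured, the rescaling bookkeeping and the passage to the Euclidean limit are routine.
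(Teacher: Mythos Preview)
Your approach is correct but differs from the paper's in two respects. First, in the hyperbolic range $|\pmb\beta|<-2$ the paper does not reduce to $K=-1$ and then rescale; it invokes the Green's-identity computation of \cite[Proof of Lemma~3.1]{KalvinLast} directly for an arbitrary fixed area $S_\varphi$, so the constant $1$ in $1-2\varphi_k$ falls out of that single regularized boundary calculation rather than from the $\beta_k$-dependence of a rescaling factor. Your detour through the $K=-1$ base case is valid and has the conceptual merit of isolating where the $-2\pi$ comes from, but it does not save any work: the same contour manipulation you need for your base identity $\partial_{\beta_k}\mathcal S_{\pmb\beta}[\varphi^{(-1)}]=4\pi\varphi_k^{(-1)}$ already yields the fixed-area statement in one step. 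Second, at $|\pmb\beta|=-2$ the paper does not argue by continuity but again by a direct (Green's-identity) computation with the explicit flat potential~\eqref{ECSC}; your remark that one ``cannot simply insert'' the flat potential is true only of a naive substitution---the paper's direct computation still proceeds on the Euclidean locus because the integral term in~\eqref{LA} carries the prefactor $|\pmb\beta|+2$, so its $\beta_k$-derivative at $|\pmb\beta|=-2$ requires only the value of $\tfrac{1}{S_\varphi}\int\varphi e^{2\varphi}$ there, and the remaining terms are handled by the same regularized boundary argument. Your continuity route is nonetheless a legitimate alternative, provided you invoke the $C^1$ regularity in $\beta_k$ alluded to just before the lemma.
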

\begin{proof} In the case $|\pmb\beta|<-2$,  the proof  essentially repeats the one in~\cite[Proof of Lemma 3.1]{KalvinLast}, where   the differentiation with respect to $\beta_k$ is now justified by the results~\cite
{KimWilkin,SchuTrap1,SchuTrap2} on the regularity of $\beta_k\mapsto \varphi(\pmb\beta)$ for the hyperbolic metric $e^{2\varphi}|dx|^2$ on the Riemann sphere. Indeed, one  need only notice that the index $j=k$ in~\cite[Proof of Lemma 3.1]{KalvinLast} now runs from $1$ to $n$, and the region $\Bbb C_R$ is defined as follows:
$$
\Bbb C_R:=\{x\in\Bbb C: |x|\leq R,\  |x-x_k|\geq 1/R,\ k=1,\dots n \}.
$$

In the case $|\pmb\beta|=-2$  the integral term in~\eqref{LA} disappears, and the equality~\eqref{ZZeqn} can be verified by a direct computation, cf.~\eqref{ECSC}. We omit the details.
\end{proof}

Our choice of examples in Section~\ref{ExAppl} is partially motivated by the following result:
\begin{theorem} \label{ExtDet} The (hyperbolic or flat) surfaces of five Platonic solids and the regular constant curvature dihedra are critical points of the spectral determinant on the conical  metrics of fixed area and fixed Gaussian curvature.

More precisely: Consider the divisors $\pmb\beta=\sum_{k} \beta_k\cdot x_k$  of degree $|\pmb\beta|\leq -2$ with distinct marked points $x_1,\dots,x_n$ and weights $\beta_k\in(-1,0)$. 
Then for any fixed $S>0$ and any divisor $\pmb\beta$ there exists a unique metric  $e^{2\varphi}|dx|^2$  on $\overline{\Bbb C}$  of  total area $S$, Gaussian curvature $K=2\pi(|\pmb\beta|+2)/S$, and  representing the divisor $\pmb\beta$. 
Consider the spectral determinant  $\det\Delta_{\pmb\beta}$ of the surface $(\overline{\Bbb C}_x,e^{2\varphi}|dx|^2)$ as a function on the configuration space
$$
\mathcal Z_n(S,K)=\left\{\pmb\beta =\sum_{k\leq n} \beta_k\cdot x_k:  x_j\neq x_k\in \overline{\Bbb C}\text{ for } j\neq k, \beta_k\in(-1,0), 2\pi (|\pmb\beta|+2)=S K \right\}
$$
with some fixed values  $S>0$, $K\leq 0$, and $n\geq 3$.
\begin{enumerate}
\item If  $\pmb\beta_0\in \mathcal Z_n(S,K) $ is a divisor such that the corresponding surface $(\overline{\Bbb C}_x,e^{2\varphi}|dx|^2)$ is isometric to the one of a Platonic solid, then
$\pmb\beta_0$ is a stationary point of the function 
\be\label{ext_det}
\mathcal Z_n(S,K)\ni\pmb\beta\mapsto \det\Delta_{\pmb\beta},
\ee
where $n$ is the number of vertices of the Platonic solid. 
\item  If  $\pmb\beta_0\in \mathcal Z_n(S,K) $ is a divisor such that the corresponding surface $(\overline{\Bbb C}_x,e^{2\varphi}|dx|^2)$ is isometric to the regular dihedron with $n$ vertices, then
$\pmb\beta_0$ is a stationary point of the function~\eqref{ext_det} with the corresponding value of $n$. 
\end{enumerate}
\end{theorem}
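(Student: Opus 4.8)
The plan is to reduce the statement to the vanishing of the differential of the real-analytic function $\Phi(\pmb\beta):=\log\det\Delta_{\pmb\beta}$ at the divisor $\pmb\beta_0$ representing a Platonic (or dihedral) surface, and then to kill that differential by a symmetry (isotropy representation) argument in the spirit of~\cite{OPS,OPS.5,OPS1}. Since $\det\Delta_{\pmb\beta}>0$, stationarity of $\pmb\beta\mapsto\det\Delta_{\pmb\beta}$ is equivalent to $d\Phi_{\pmb\beta_0}=0$. The starting observation is that $\Phi$ is invariant under the full M\"obius group: for $g\in\mathrm{PSL}(2,\Bbb C)$ the surfaces $(\overline{\Bbb C}_x,e^{2\varphi_{\pmb\beta}}|dx|^2)$ and $(\overline{\Bbb C}_x,e^{2\varphi_{g\cdot\pmb\beta}}|dx|^2)$ are isometric via $g$, whence $\Phi(g\cdot\pmb\beta)=\Phi(\pmb\beta)$ with $g\cdot\pmb\beta=\sum_k\beta_k\cdot g(x_k)$. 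Let $G\subset\mathrm{PSL}(2,\Bbb C)$ be the orientation-preserving symmetry group of the surface at $\pmb\beta_0$, namely the rotation group $A_4$, $S_4$, or $A_5$ in the tetrahedral, octahedral/cube, and icosahedral/dodecahedral cases, and the dihedral group $D_n$ of order $2n$ for the regular dihedron. By the uniqueness of the constant-curvature metric representing a divisor, the metric inherits the symmetry of the vertex set; as $G$ permutes the $n$ vertices and all weights $\beta_k$ are equal, $g\cdot\pmb\beta_0=\pmb\beta_0$ for every $g\in G$, so $\pmb\beta_0$ is a $G$-fixed point. Hence $d\Phi_{\pmb\beta_0}$ is a $G$-invariant functional on $T_{\pmb\beta_0}\mathcal Z_n(S,K)$, and it suffices to show that this tangent space carries \emph{no} nonzero $G$-fixed vector, for then $(T^*_{\pmb\beta_0}\mathcal Z_n(S,K))^G=0$ and $d\Phi_{\pmb\beta_0}=0$.

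Next I would decompose the tangent space $G$-equivariantly as
\[
T_{\pmb\beta_0}\mathcal Z_n(S,K)=\Big(\bigoplus_{k=1}^n T_{x_k}\overline{\Bbb C}_x\Big)\ \oplus\ \Big\{\delta\pmb\beta:\ \textstyle\sum_{k}\delta\beta_k=0\Big\},
\]
the first summand being the \emph{moduli} (vertex-moving) directions, the second the \emph{weight} directions permitted by the constraint $2\pi(|\pmb\beta|+2)=SK$ that fixes $\sum_k\beta_k$. Here $G$ acts on the first summand by permuting the points $x_k$ and simultaneously rotating their tangent planes, and on the second by permuting the weights. The core of the proof is to verify that neither summand contains the trivial representation of $G$.

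For the weight directions this is immediate: transitivity of $G$ on the $n$ vertices makes the permutation representation on $(\delta\beta_1,\dots,\delta\beta_n)$ contain the trivial representation with multiplicity one, spanned by $(1,\dots,1)$; but this is precisely the direction with $\sum_k\delta\beta_k\neq0$ excluded by fixing $K$, so its complement $\{\sum_k\delta\beta_k=0\}$ has no $G$-invariants. For the moduli directions I would use that the stabilizer $G_{x_k}$ of a vertex is a cyclic group $C_m$ of rotations about the axis through $x_k$, with $m\geq 2$ in every case ($m=3,4,3,5,3$ at a vertex of the tetrahedron, octahedron, cube, icosahedron, dodecahedron, and $m=2$ for the dihedron, as follows from $m=|G|/n$). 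Such a rotation acts on the real two-plane $T_{x_k}\overline{\Bbb C}_x$ as a nonzero rotation, hence without nonzero fixed vectors; since $\bigoplus_k T_{x_k}\overline{\Bbb C}_x$ is the $G$-representation induced from $T_{x_1}\overline{\Bbb C}_x$ up from $G_{x_1}$, Frobenius reciprocity shows that the multiplicity of the trivial $G$-representation equals $\dim\big(T_{x_1}\overline{\Bbb C}_x\big)^{G_{x_1}}=0$. Combining the two cases gives $\big(T_{\pmb\beta_0}\mathcal Z_n(S,K)\big)^G=0$, and therefore $d\Phi_{\pmb\beta_0}=0$, which is the asserted stationarity.

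Finally, as a cross-check and to exhibit the gradient explicitly, one can substitute the anomaly formula of Lemma~\ref{ANOMALY} (equivalently its Liouville-action form) into $\Phi$ and differentiate using the Zograf--Takhtajan identity~\eqref{TZeqn} and Lemma~\ref{Z^2} together with~\eqref{ZZeqn}: the weight derivatives $\partial_{\beta_k}\Phi$ are then forced to coincide at $\pmb\beta_0$ by the transitivity of $G$, so that $\sum_k\partial_{\beta_k}\Phi\,\delta\beta_k=0$ on the constraint, while the moduli derivatives are controlled by the accessory parameters $h_k$ of Lemma~\ref{AccPar}. I expect the genuine obstacle to be organizational rather than analytic: one must (i) record that $\Phi$ is differentiable on $\mathcal Z_n(S,K)$, which is granted by the regularity results cited before the theorem, (ii) check the transitivity and vertex-stabilizer data for each of the five solids and for the dihedra, and (iii) confirm that the displayed splitting of $T_{\pmb\beta_0}\mathcal Z_n(S,K)$ is $G$-stable. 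The representation-theoretic vanishing of the preceding paragraph is what carries the weight of the argument, and it applies uniformly to all the cases because every vertex of a Platonic solid or a regular dihedron has a nontrivial rotational stabilizer.
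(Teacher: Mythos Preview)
Your argument is correct and takes a genuinely different route from the paper. The paper proceeds by explicit computation, working out the tetrahedron case in detail: it fixes the four vertices at $0,-1,e^{\pm i\pi/3}$, writes down the concrete symmetries $x\mapsto\bar x$, $x\mapsto e^{\pm 2\pi i/3}x$, $x\mapsto (x+1)/(2x-1)$, and uses uniqueness of the metric to derive explicit relations~\eqref{STAR} among the asymptotic coefficients $\varphi_k$ and their $\beta$-derivatives. These relations are then fed into the anomaly formula~\eqref{AnFla} together with Lemma~\ref{Z^2} to check by hand that the weight derivatives vanish on the constraint; for the moduli direction the paper perturbs a single vertex and again invokes a concrete rotation to equate directional derivatives and force them to zero. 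The remaining solids and the dihedra are dismissed as analogous. Your approach replaces all of this by a single isotropy computation: the differential $d\Phi_{\pmb\beta_0}$ is $G$-invariant, and Frobenius reciprocity plus the fact that every vertex stabilizer is a nontrivial rotation kills the $G$-invariants in the moduli summand, while transitivity on vertices kills them in the weight summand. This is cleaner, uniform across all cases, and avoids the somewhat delicate bookkeeping of~\eqref{STAR}; what the paper's approach buys in exchange is an explicit identification of the gradient in terms of the uniformization data $\varphi_k$, which your final paragraph correctly flags as a cross-check via~\eqref{TZeqn} and~\eqref{ZZeqn}. One small caveat: Lemma~\ref{AccPar} as stated applies to the pullback metrics $f^*m_{\pmb\beta}$, not to arbitrary conical metrics, so in your cross-check you should invoke~\eqref{TZeqn} directly rather than Lemma~\ref{AccPar}.
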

\begin{proof}  
Consider the potential $\varphi(x;\beta_1,\beta_2,\beta_3,\beta_4)$  of a (unique) unit area constant curvature metric $e^{2\varphi}|d x|^2$ representing the divisor
$$
\pmb\beta=\beta_1\cdot 0 +\beta_2\cdot (-1)+\beta_3\cdot e^{i\frac {\pi} 3}+\beta_4\cdot e^{-i\frac{\pi} 3}. 
$$
Recall that the Gauss-Bonnet theorem~\cite{Troyanov} implies that the (regularized) Gaussian curvature of  the surface $(\overline{\Bbb C}_x, e^{2\varphi}|dx|^2)$ equals $2\pi(|\pmb\beta|+2)$. The four marked points in the divisor $\pmb\beta$ are in an equi-anharmonic position. In particular, if the orders of the conical singularities satisfy $\beta_k=|\pmb\beta|/4$, then the surface $(\overline{\Bbb C}_x, e^{2\varphi}|dx|^2)$ is isometric to the one of a unit area regular tetrahedron of Gaussian curvature $2\pi(|\pmb\beta|+2)\leq 0$.  

Notice that  $\varphi(\bar x;\beta_1,\beta_2,\beta_3,\beta_4)$ is the potential of a (unique) unit area Gaussian curvature $2\pi(|\pmb\beta|+2)$ metric 
representing the divisor 
$$
\pmb\beta=\beta_1\cdot 0 +\beta_2\cdot (-1)+\beta_4\cdot e^{i\frac {\pi} 3}+\beta_3\cdot e^{-i\frac{\pi} 3}.
$$
Similarly, the potential $\varphi(e^{i\frac{2\pi}3} x;\beta_1,\beta_2,\beta_3,\beta_4)$ 
corresponds to  the divisor 
$$
\pmb\beta=\beta_1\cdot 0 +\beta_4\cdot (-1)+\beta_2\cdot e^{i\frac{\pi} 3}+\beta_3\cdot e^{-i\frac{\pi} 3};
$$
$\varphi(e^{-i\frac{2\pi}3} x;\beta_1,\beta_2,\beta_3,\beta_4)$ 
corresponds to  the divisor 
$$
\pmb\beta=\beta_1\cdot 0+\beta_3\cdot (-1) +\beta_4\cdot e^{i\frac \pi 3 }+\beta_2\cdot e^{-i\frac{\pi} 3};
$$
and $\varphi(\frac {x+1}{2x-1};\beta_1,\beta_2,\beta_3,\beta_4)+\log3-2\log|2x-1|$
corresponds to  the divisor 
$$
\pmb\beta=\beta_2\cdot 0+\beta_1\cdot (-1) +\beta_4\cdot e^{i\frac{\pi} 3} +\beta_3\cdot e^{-i\frac {\pi} 3}.
$$

As a consequence of these symmetries, we have
$$
\varphi(x;\beta_1,\beta_2,\beta_3,\beta_4)=\varphi(\bar x;\beta_1,\beta_2,\beta_4,\beta_3)   =\varphi(e^{i\frac{2\pi}3} x;\beta_1,\beta_3,\beta_4,\beta_2)=\varphi(e^{-i\frac{2\pi}3} x;\beta_1,\beta_4,\beta_2,\beta_3)
$$
$$
=\varphi\left(\frac {x+1}{2x-1};\beta_2,\beta_1,\beta_4,\beta_3\right)+\log3-2\log|2x-1|.
$$
For the coefficients $\varphi_k=\varphi_k(\beta_1,\beta_2,\beta_3,\beta_4)$ in  the asymptotics~\eqref{ascoeff_k}
 the latter equalities imply
\be\label{STAR}
\ba
(\varphi_1-\varphi_\ell)|_{\beta_k=|\pmb\beta|/4}=(|\pmb\beta|/4+1)\log 3,\quad \ell=2,3,4,
\\
\sum_{j=1}^4 ( \partial_{\beta_1}\varphi_j -\partial_{\beta_\ell}\varphi_j )|_{\beta_k=|\pmb\beta|/4}=\log3,\quad\ell=2,3,4.
\ea
\ee

 Denote the Friedrichs Laplacian on $(\overline{\Bbb C}_x,e^{2\varphi}|dx|^2)$ by $\Delta_{\pmb\beta}$. As is proven in~\cite[Sec. 2]{KalvinLast}, the spectral determinant $\det \Delta_{\pmb\beta}$ satisfies the anomaly formula
 \be\label{AnFla}
 \log \det\Delta_{\pmb\beta}=-\frac{|\pmb\beta|+1} 6 -\frac 1 {12\pi}\left(\mathcal S_{\pmb\beta}[\varphi]-\pi\log \mathcal H_{\pmb\beta}[\varphi]\right) -\sum_{k=1}^4\mathcal C(\beta_k)+\bf C,
 \ee
where $\mathcal S_{\pmb\beta}[\varphi]$ is the Liouville  action~\eqref{LA} and 
 \be\label{F_H}
\mathcal H_{\pmb\beta}   [\varphi]= \exp\left\{2\sum_{k=1}^4 \left(\beta_k+1-\frac 1 {\beta_k+1} \right)\varphi_k\right\}.
\ee

Since $|\pmb\beta|$ is fixed, we can set, for example, $\beta_1=|\pmb\beta|-\sum_{k>1}\beta_k$, and consider the determinant $\det\Delta_{\pmb\beta}$ as a function of $(\beta_2,\beta_3,\beta_4)$.  Then, thanks to the anomaly formula~\eqref{AnFla}, Lemma~\ref{Z^2}, and the equality~\eqref{F_H}, we have
$$
\ba
\partial_{\beta_\ell}  \left( \log \det\Delta_{\pmb\beta}|_{\beta_1=|\pmb\beta|-\sum_{k>1}\beta_k}\right)|_{\beta_k=\frac{|\pmb\beta|}4}& =\frac 1 3 (\varphi_1-\varphi_\ell)|_{\beta_k=\frac{|\pmb\beta|}4} 
\\ & - \frac 1 6 \left( 1+\left(\frac{|\pmb\beta|}4+1\right)^{-2}   \right)(\varphi_1-\varphi_\ell)|_{\beta_k=\frac{|\pmb\beta|}4}
\\
& - \frac 1 6 \left(\frac {|\pmb\beta|}4+1-\frac 1 {\frac{|\pmb\beta|}4+1}\right) \sum_{j=1}^4 ( \partial_{\beta_1}\varphi_j -\partial_{\beta_\ell}\varphi_j ) |_{\beta_k=\frac {|\pmb\beta|}4}.
\ea
$$
Here the right-hand side is equal to zero because of~\eqref{STAR}.

 Now we are in a  position to study the determinant under a small perturbation of the coordinate of a vertex. Let us consider the potential $\varphi(x)$  of a (unique) unit area constant curvature metric $e^{2\varphi}|d x|^2$ representing the divisor
$$
\pmb\beta=\frac{|\pmb\beta|} 4 \cdot h +\frac{|\pmb\beta|} 4\cdot (-1)+\frac{|\pmb\beta|} 4\cdot e^{i\frac {\pi} 3}+\frac{|\pmb\beta|} 4\cdot e^{-i\frac{\pi} 3}. 
$$
Here $h$ is a small complex number. In the case $h= 0$ the surface $(\overline{\Bbb C}_x, e^{2\varphi}|dx|^2)$ is isometric to the one of a unit area constant curvature regular tetrahedron.

Consider, for example, the rotation $x\mapsto e^{i\frac{2\pi}3} x$. Notice that  $ \chi(x)  :=\varphi( e^{i\frac{2\pi}3} x)$ is the potential of a (unique) unit area Gaussian curvature $2\pi(|\pmb\beta|+2)$ metric 
representing the divisor 
$$
{\pmb\gamma}=\frac{|\pmb\beta|} 4 \cdot (e^{-i\frac{2\pi}3}h) +\frac{|\pmb\beta|} 4\cdot (-1)+\frac{|\pmb\beta|} 4\cdot e^{i\frac {\pi} 3}+\frac{|\pmb\beta|} 4\cdot e^{-i\frac{\pi} 3}.
$$
The surfaces $({\overline {\Bbb C}}_x, e^{2\varphi}|dx|^2)$ and $({\overline {\Bbb C}}_x, e^{2 \chi}|dx|^2)$ are isometric, the isometry is given by the rotation. 
As a consequence,  $\det \Delta_{\pmb\beta}=\det \Delta_{\pmb\gamma}$. Equating the directional derivative of $\det \Delta_{\pmb\beta}$ along $h$ with the one along $e^{i\frac{2\pi}3}h$, we immediately conclude that 
  $$
  \partial_{\Re h}\det\Delta_{\pmb\beta}=\partial_{\Im h} \det\Delta_{\pmb\beta}=0.
  $$ 
  
It remains to note that the determinants of the Laplacians $\Delta_{\pmb\beta}$  and $\Delta_{\pmb\beta}^{S_\varphi}=\frac 1 {S_\varphi} \Delta_{\pmb\beta}$ satisfy the standard rescaling property
$$
\log\det\Delta_{\pmb\beta}^{S_\varphi}=\log \det \Delta_{\pmb\beta} +\zeta_{\pmb\beta} (0) \log S_{\varphi},
$$
  where the value $\zeta_{\pmb\beta} (0) $ of the spectral zeta function at zero~\cite{KalvinJFA} does not depend on the moduli $x_1,\dots,x_4$ and satisfies
  $$
  \zeta_{\pmb\beta} (0) =\frac{|\pmb\beta|+2} 6-\frac 1 {12}\sum_{k}\left(\beta_k+1-\frac{1}{\beta_k+1}\right)-1, \quad \partial_{\beta_\ell} \left(\zeta_{\pmb\beta}(0)|_{\beta_1=|\pmb\beta|-\sum_{k>1}\beta_k}\right)|_{\beta_k=\frac{|\pmb\beta|}4}=0.
  $$
 Due to the invariance of the spectral determinant under the M\"obius transformations, this completes the proof of the first assertion.

For the octahedron, cube, dodecahedron, icosahedron, and dihedra there are more symmetries to consider, but the idea and the steps of the proof remain exactly the same.   We omit the details. The case of constant curvature (flat, spherical, or hyperbolic) metrics with three conical singularities is studied in~\cite{KalvinLast}.
\end{proof}

As is well-known, starting from four punctures on the $2$-sphere, explicit construction of the general uniformization map is an open long-standing problem. In this paper, we rely on the uniformization via  Belyi functions. There is another straightforward special case that deserves to be mentioned.

\begin{remark} {\rm In the case of a divisor
$$
\pmb\lambda=\left(-1/ 2 \right)\cdot 0+\left(-1/ 2 \right)\cdot 1 +\left(-1/ 2 \right)\cdot\lambda +\left(-1/ 2 \right)\cdot\infty, \quad \lambda\in\Bbb C,
$$
the corresponding constant curvature unit area metric $m_{\pmb\lambda}$ with three or four conical singularities of angle $\pi$ can be written explicitly, e.g.~\cite{BE,Kra,TroyanovSSC}.

 Recall that by using a suitable M\"obius transformation we can always normalize the marked points so that any three of them are at $0,1,\infty$. As we permute the marked  points $0,1,\lambda,\infty$ by M\"obius transformations so that three of them are still $0,1,\infty$, the fourth point is one of the following six:
\begin{equation}\label{37.1}
\lambda,\quad \frac 1 \lambda,\quad 1-\lambda,\quad \frac 1 {1-\lambda},\quad \frac \lambda{\lambda-1},\quad \frac{\lambda-1}{\lambda}.
\end{equation}
In general, these six points are distinct. The exceptions are the following three cases:
\begin{itemize}
\item $\lambda=0$ or $\lambda=1$ or $\lambda=\infty$. In this case, the set~\eqref{37.1}  contains only three distinct numbers: $0,1,\infty$. This case is studied in~\cite{KalvinLast}, we do not discuss it here. 

\item Harmonic position of four points:  $\lambda=-1$ or $\lambda=1/2$ or $\lambda=2$. The set~\eqref{37.1}  contains only three distinct numbers: $-1,1/2,2$. The surface $(\overline{\Bbb C}_x, m_{\pmb\lambda})$ is isometric to a unit area flat regular dihedron with four conical singularities of angle $\pi$. 
\item Equi-anharmonic position of four points:  $\lambda=\frac{1+i\sqrt{3}}{2}$ or $\lambda=\frac{1-i\sqrt{3}}{2}$. The set~\eqref{37.1} contains only the numbers $\frac{1\pm i\sqrt{3}}{2}$. The surface $(\overline{\Bbb C}_x, m_{\pmb\lambda})$ is isometric to the surface of a unit area regular Euclidean tetrahedron. 
\end{itemize}
In general, for $\lambda\neq 0,1,\infty$, the metric is flat, and  we have
$$
m_{\pmb\lambda}=c^2_{\lambda}|x|^{-1}|x-1|^{-1}|x-\lambda|^{-1}   |dx|^2,
$$
where $c^2_{\lambda}$ is a scaling factor that guarantees that  the surface $(\overline{\Bbb C}_x,m_{\pmb\lambda})$ is of unit area, see~\cite{TroyanovSSC}.  For the spectral  determinant of the Friedrichs Laplacian $\Delta_{\lambda}$ on the flat surface $(\overline{\Bbb C}_x,m_{\pmb\lambda})$  the anomaly formula~\eqref{AnFla} gives
$$
\log\det\Delta_\lambda=- \log c_{\lambda} +\frac 1 6 (\log |\lambda|+\log|\lambda-1|)
-4\mathcal C(-1/2)+{\bf C},
$$
where  ${\bf  C}$ is the same as in~\eqref{bfC}. Besides, thanks to~\cite[Appendix]{KalvinJFA}, we have
\be\label{C(-1/2)}
\mathcal C\left(-\frac 1 2\right)=-\zeta_R'(-1)-\frac 1 6 \log 2 +\frac 1 {24}.
\ee
As is well-known, e.g.~\cite[Sec. 2.9]{Clem},  the scaling factor $c^2_\lambda$ satisfies
$$
c^{-2}_{\lambda}=\int_{\Bbb C}    \frac{dx\wedge d\bar x}{-2i |x|\cdot|x-1|\cdot|x-\lambda|} =8|k|\left(K' \overline{K} +\overline{K'}K\right), \quad \lambda=\frac{(k+1)^2}{4k},
$$ 
where $K=K(k)$  is the complete elliptic integral of the first kind, and  $K'=K(\sqrt{1-k^2})$. 

In total, in terms of  $\tau=i K'/K$, we get
$$
K=\frac \pi 2 \vartheta_3^2(0|\tau),\quad K'=-i\tau K,\quad k=\frac{\vartheta_2^2(0|\tau)}{\vartheta_3^2(0|\tau)},
$$
$$
\det\Delta_\lambda= \frac {2^{2/3}} \pi |1-k^2|^{1/3} |k|^{1/6}\sqrt{\Im\tau} |K|=\sqrt{\Im\tau}|\eta(\tau/2)|^2.
$$
Here $\eta(\tau/2)$ is the Dedekind eta function. The last equality is a consequence of the identities 
$$
 2\eta^3(\tau)=\vartheta_2(0|\tau) \vartheta_3(0|\tau) \vartheta_4(0|\tau),\quad \eta^2(\tau/2)=\vartheta_4(0|\tau)\eta(\tau),\quad  \vartheta^4_3(0|\tau)= \vartheta^4_2(0|\tau)+ \vartheta^4_4(0|\tau).
$$
By analyzing the expression $\sqrt{\Im\tau}|\eta(\tau/2)|^2$, it is not hard to see that there are only two stationary points: $\tau=2i$ is a saddle point, and $\tau=2e^{2\pi i/3}$ is the unique absolute maximum  of the determinant $\Bbb C\setminus\{0,1\}\ni\lambda\mapsto \det\Delta_{\lambda}$, cf.~\cite[Sec. 4]{OPS}. The case $\tau=2i$ (resp. $\tau=2e^{i\pi/3}$) corresponds to a harmonic (resp. to an equi-anharmonic) position of four points in the divisor $\pmb\lambda$, cf.~Theorem~\ref{ExtDet}.

The reader may also find it interesting that in~\cite[Sec. 3.5.2.]{KK06} the authors demonstrate that the determinant $\det\Delta_\lambda$ is maximal for some (not equi-anharmonic nor harmonic)  positions of four points on the unit circle.
}\end{remark}

\section{Examples and applications}\label{ExAppl}

\subsection{Determinant  for triangulations by plane trees   }
By  Riemann's existence theorem,  the planar bicolored trees are in one-to-one correspondence with the (classes of equivalence of) Shabat polynomials~\cite{BZ,LZ}, see also~\cite{Bishop}. Recall that a Shabat polynomial, also known as a generalized Chebyshev polynomial, is a polynomial with at most two critical values. Thanks to Theorem~\ref{main}, to each bicolored plane tree we can associate a family of spectral invariants $\det \Delta_{f^*\pmb\beta}$. Indeed, a Belyi function $f: \overline{\Bbb C}_x\to \overline{\Bbb C}_z$ (in this case it is a Shabat polynomial) only prescribes a certain gluing scheme of the bicolored double triangles. We can still make any suitable choice of the angles of those triangles, or, equivalently, of the orders $\beta_0$, $\beta_1$, and $\beta_\infty$ of three conical singularities of the constant curvature metric $e^{2\phi}|dz|^2$ on the target Riemann sphere $ \overline{\Bbb C}_z$.
 
As an example, consider  the  Shabat polynomial
$$
f(x)=x^\ell,\quad \ell\in\Bbb N. 
$$
The ramification divisor is 
 $$
 \pmb f=(\ell-1)\cdot 0+(\ell-1)\cdot \infty,\quad |\pmb f|=2\ell-2,
 $$ 
  where $x=0$ is the only point with $f'(x)=0$, and $x=\infty$ is the only pole of  $f$.
     The corresponding bicolored tree is the inverse image of the line segment $[0,1]$ under $f$, see~Fig.~\ref{Snowflake}.
   \begin{figure}[h]
   \centering 
   \begin{tikzpicture}[scale=.7]
\draw[black,solid,thick] (5,0)node[anchor=west]{$1$}--(0,0)node[anchor=north east]{} -- (4.33,2.5)node[anchor=south]{$e^{\frac {2\pi i} \ell}$}; 
\draw[black,solid,thick]  (0,0) -- (4.33,-2.5)node[anchor=north]{$e^{-\frac{2\pi i }\ell }$};
\draw[black,solid,thick]  (0,0) -- (2.5,4.33)node[anchor=south]{$e^{\frac{4\pi i }\ell }$};
\draw[black,solid,thick]  (0,0) -- (2.5,-4.33)node[anchor=north]{$e^{-\frac{4\pi i }\ell }$};
 \filldraw[black] (0,0) circle (3.5pt);  
 \filldraw[white](4.33,2.5) circle (3.5pt);     \draw[black,thick] (4.33,2.5) circle (3.5pt);
 \filldraw[white] (5,0) circle (3.5pt); \draw[black,thick] (5,0) circle (3.5pt); 
\filldraw[white] (4.33, -2.5) circle (3.5pt); \draw[black,thick] (4.33, -2.5) circle (3.5pt);
\filldraw[white] (2.5,4.33) circle (3.5pt);\draw[black, thick] (2.5,4.33) circle (3.5pt);
\filldraw[white] (2.5,-4.33) circle (3.5pt); \draw[black,thick] (2.5,-4.33) circle (3.5pt);
\draw[black,dashed]  (0,0) -- (0,5);    \filldraw[white](0,5) circle (3.5pt);     \draw[black] (0,5) circle (3.5pt);
\draw[black,dashed]  (0,0) -- (0,-5);    \filldraw[white](0,-5) circle (3.5pt);     \draw[black] (0,-5) circle (3.5pt);
\draw[black,dashed]  (0,0) -- (-2.5,4.33);\filldraw[white] (-2.5,4.33) circle (3.5pt);\draw[black] (-2.5,4.33) circle (3.5pt);
\draw[black,dashed]  (0,0) -- (-2.5,-4.33);\filldraw[white] (-2.5,-4.33) circle (3.5pt);\draw[black] (-2.5,-4.33) circle (3.5pt);
   \draw[black,dashed]  (0,0) -- (-5,0); \filldraw[white] (-5,0) circle (3.5pt);\draw[black] (-5,0) circle (3.5pt);
   \draw[black,dashed]  (0,0) -- (-4.33,2.5);\filldraw[white] (-4.33,2.5) circle (3.5pt);\draw[black] (-4.33,2.5) circle (3.5pt);
 \draw[black,dashed]  (0,0) -- (-4.33,-2.5);\filldraw[white] (-4.33,-2.5) circle (3.5pt);\draw[black] (-4.33,-2.5) circle (3.5pt);
  \draw[arrows = {-Stealth[length=10pt, inset=2pt]},dotted,thick] (1.25,2.165) arc (60:300:2.5 and 2.5);
\end{tikzpicture}
\caption{Dessin d'enfant representing the polynomial $f(x)=x^\ell$}
\label{Snowflake}
\end{figure}
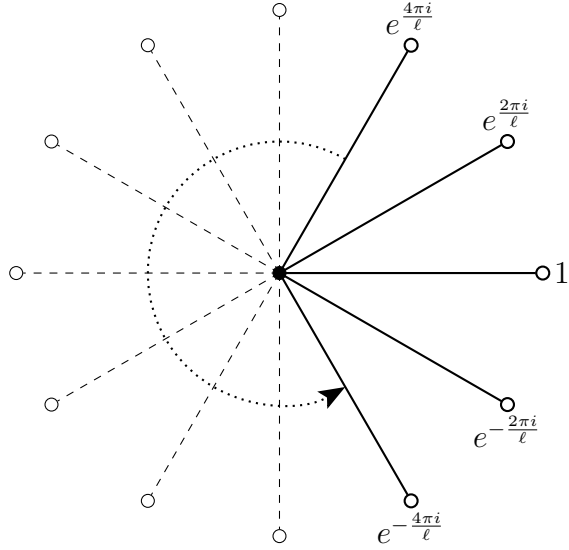
The black colored point is the preimage of  the point $z=0$, and  the white colored points are the $\ell$ preimages  $x=\sqrt[\ell]{1}$ of $z=1$.  
This describes the cyclic triangulation of the Riemann sphere, or, equivalently, the tessellation of the standard round sphere with $(\ell,\infty,\ell)$ bicolored double triangles,   see  Fig.~\ref{Cyclic triangulation}.

Clearly, the first non-zero coefficient in the Taylor expansion of $f-f(0)$ at zero is $c_1=1$, and the first non-zero coefficient  in the Laurent expansion of $f$ at infinity  is $c_2=1$. Hence,   the equality~\eqref{C_f}  immediately implies
  \be\label{calcCf}
  C_f=\frac 1 6 \left(\frac {\ell-1}{\ell}\log|c_1|+(\ell+1)\log\ell\right) +\frac 1 6 \left(-\frac {\ell-1}\ell\log |c_2| -(\ell+1)\log\ell\right)=0,
  \ee
 where $C_f$ is the constant from Theorem~\ref{PBdet}.

The pullback of the divisor $\pmb \beta=  \beta_0\cdot 0+\beta_1\cdot 1+\beta_\infty \cdot \infty$ by $f(x)=x^\ell$ is the divisor $ f^*{\pmb \beta}=\sum_k \left(f^*\pmb\beta\right)_k\cdot x_k$. For the latter one we have
 $$
 f^*{\pmb \beta}=(\ell(\beta_0+1)-1)\cdot0    +\beta_1\cdot\{\sqrt[\ell]{1}\}+(\ell(\beta_\infty+1)-1)\cdot\infty, 
 $$
 where $\{\sqrt[\ell]{1}\}$ stands for the set of $\ell$ radicals $\sqrt[\ell]{1}$ in $\Bbb C_x$ (those are the white colored points of the ''snowflake`` in Fig.~\ref{Snowflake}).
  The notation $\beta_1\cdot\{\sqrt[\ell]{1}\}$  means that each element of the set $\{\sqrt[\ell]{1}\}$ is a marked point of weight $\beta_1$.

  \begin{figure}[h]
 \centering\includegraphics[scale=.3]{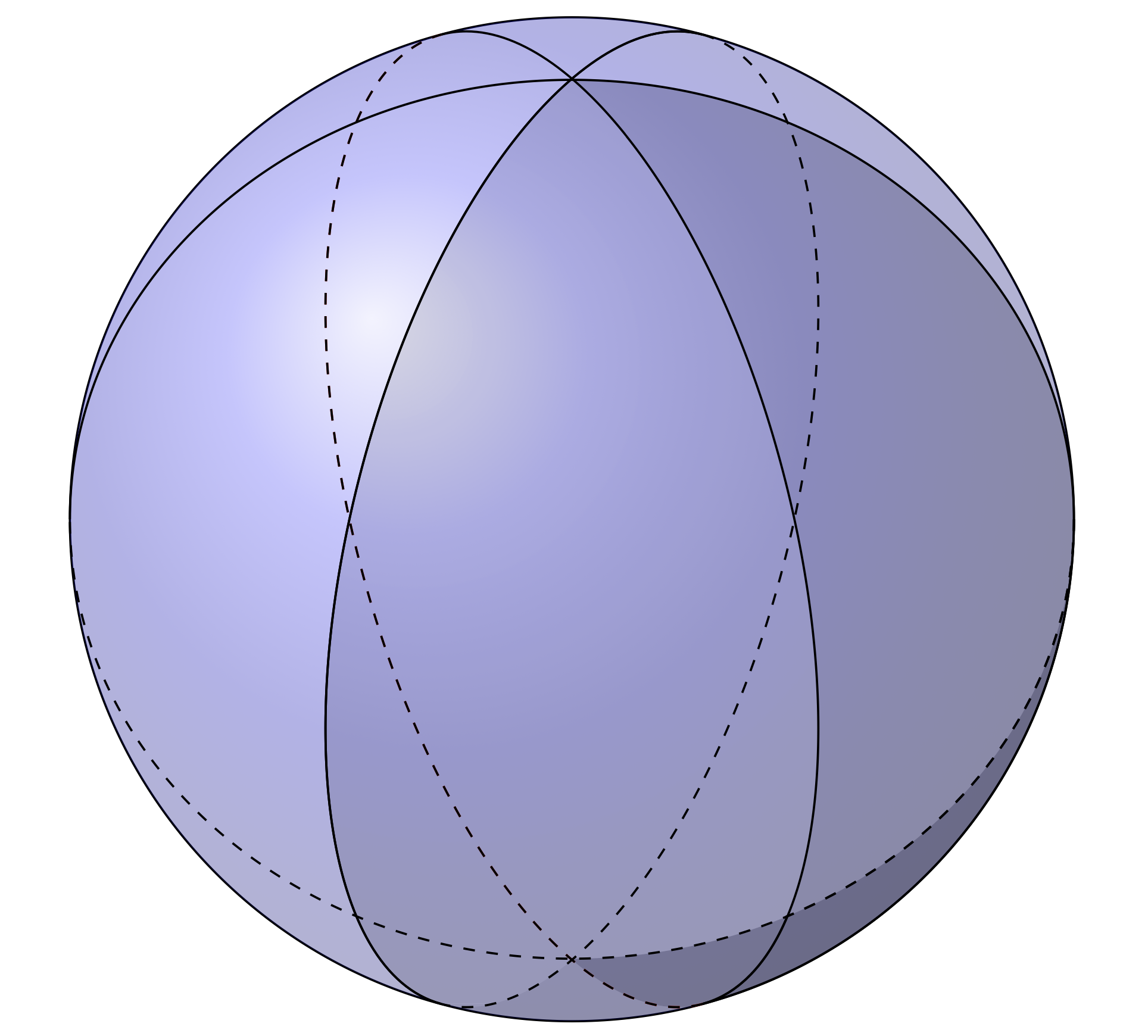}
 \caption{Cyclic triangulation}
\label{Cyclic triangulation}
\end{figure}

\begin{theorem}[Cyclic triangulation]\label{CTr}  Let $m_{\pmb\beta}$ be the unit area Gaussian curvature $2\pi(|\pmb\beta|+1)$ metric of $S^2$-like double triangle, see Section~\ref{PrelimTriangulation}. Let $f(x)=x^\ell$ with $\ell\in\Bbb N$, cf. Fig~\ref{Cyclic triangulation}. Then for  the  zeta-regularized  spectral determinant of the Friedrichs  Laplacian $\Delta_{f^*\pmb\beta}$ corresponding to the area $\ell$ pullback metric $f^*m_{\pmb\beta}=e^{2f^*\phi}|dx|^2$  we have 
$$
\ba
\log\frac { \det \Delta_{f^*\pmb\beta}} \ell = &  \ell \Bigl(\log \det \Delta_{\pmb\beta} + \mathcal C(\beta_0)   + \mathcal C(\beta_\infty)-{\bf C}\Bigr)
\\
&+\frac 1 6  \left(\ell-\frac 1 {\ell}\right)\left(\frac {\Psi(\beta_0,\beta_1,\beta_\infty)}{\beta_0+1}  +\frac {\Psi(\beta_\infty,\beta_1,\beta_0)}{\beta_\infty+1}  \right) 
\\
    &    -\frac 1 6   \left( \ell(\beta_0+\beta_\infty+2)+\frac 1 { \ell(\beta_0+1)}  +\frac 1 {\ell(\beta_\infty+1)} \right)\log \ell
 \\
 &  - \mathcal C\left(  \ell(\beta_0+1)-1 \right)-\mathcal C\left(   \ell(\beta_\infty+1)-1\right)+ {\bf C},
\ea
$$
where the right hand side is an explicit function of $\ell\in\Bbb N$ and $(\beta_0,\beta_1,\beta_\infty)\in(-1,0]^3$. 

 Here $\beta\mapsto \mathcal C(\beta)$ is the function~\eqref{cbeta}, $\bf C$ is the constant introduced in~\eqref{bfC}, and the function 
 \be\label{eqn_Psi}
\begin{aligned}
\Psi(\beta_0,\beta_1,\beta_\infty)& = \log \frac{\Gamma(-\beta_0)}{\Gamma(1+\beta_0)}
\\
&+\frac 1 2 \log \frac{   \Gamma\left(2+\frac {|\pmb \beta|} 2\right)  \Gamma\left(\beta_0-\frac {|\pmb \beta|} 2\right)
 \Gamma\left(1+\frac{|\pmb \beta|} 2-\beta_1\right)  \Gamma\left(1+\frac {|\pmb \beta|} 2-\beta_\infty\right) }{  \pi  \Gamma\left(-\frac {|\pmb \beta|} 2\right)  \Gamma\left(1+\frac {|\pmb \beta|} 2-\beta_0\right)    \Gamma\left(\beta_1-\frac {|\pmb \beta|} 2\right) \Gamma\left(\beta_\infty-\frac {|\pmb \beta|} 2\right) }
 \end{aligned}
\ee
 is the one from~\eqref{asympt_phi}.  Recall  that  $\det \Delta_{\pmb\beta}$ stands for an explicit function~\eqref{CalcVar}, whose values are the determinants of the unit area $S^2$-like double triangles of Gaussian curvature $2\pi(|\pmb\beta|+2)$.
\end{theorem}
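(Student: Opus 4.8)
The plan is to obtain Theorem~\ref{CTr} as a direct specialization of the master formula~\eqref{S20STMNT} of Theorem~\ref{main} to the Shabat polynomial $f(x)=x^\ell$, for which $\deg f=\ell$. First I would record the ramification data: the $n=\ell+2$ marked points split into three orbits. The single zero $x=0$ has $\ord_0 f=\ell-1$ and $f(0)=0$, so $\beta_{f(0)}=\beta_0$ and $(f^*\pmb\beta)_0=\ell(\beta_0+1)-1$; the single pole $x=\infty$ has $\ord_\infty f=\ell-1$ and $f(\infty)=\infty$, so $\beta_{f(\infty)}=\beta_\infty$ and $(f^*\pmb\beta)_\infty=\ell(\beta_\infty+1)-1$; and the $\ell$ simple preimages $x=\sqrt[\ell]{1}$ of $z=1$ each have $\ord_k f=0$, $\beta_{f(x_k)}=\beta_1$, hence $(f^*\pmb\beta)_k=\beta_1$. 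Since $f(\infty)=\infty\in\{0,1,\infty\}$, the hypotheses of Theorem~\ref{main} are met.

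Next I would substitute these data term by term into~\eqref{S20STMNT}. The crucial simplification is that every contribution of the $\ell$ white points vanishes: in the first sum the prefactor $\ord_k f+1-\tfrac 1{\ord_k f+1}=1-1=0$; in the second sum $\log(\ord_k f+1)=\log 1=0$; and in the third sum $\mathcal C((f^*\pmb\beta)_k)-(\ord_k f+1)\mathcal C(\beta_{f(x_k)})=\mathcal C(\beta_1)-\mathcal C(\beta_1)=0$. Thus only $x=0$ and $x=\infty$ survive, with $\ord_k f+1=\ell$ and $\phi_{f(x_k)}$ equal to $\Psi(\beta_0,\beta_1,\beta_\infty)$ and $\Psi(\beta_\infty,\beta_1,\beta_0)$ respectively by~\eqref{asympt_phi}. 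The constant $C_f$ is already computed to equal $0$ in~\eqref{calcCf}, using $c_1=c_2=1$ in Proposition~\ref{PBdet}.

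Carrying out the substitution then produces exactly the four lines of the claimed formula: the first sum gives $\tfrac 16\bigl(\ell-\tfrac1\ell\bigr)\bigl(\tfrac{\Psi(\beta_0,\beta_1,\beta_\infty)}{\beta_0+1}+\tfrac{\Psi(\beta_\infty,\beta_1,\beta_0)}{\beta_\infty+1}\bigr)$; the second sum, after using $(f^*\pmb\beta)_k+1=\ell(\beta_{f(x_k)}+1)$, gives $-\tfrac16\bigl(\ell(\beta_0+\beta_\infty+2)+\tfrac1{\ell(\beta_0+1)}+\tfrac1{\ell(\beta_\infty+1)}\bigr)\log\ell$; the third sum contributes $-\mathcal C(\ell(\beta_0+1)-1)-\mathcal C(\ell(\beta_\infty+1)-1)+\ell\,\mathcal C(\beta_0)+\ell\,\mathcal C(\beta_\infty)$; and $-(\deg f-1){\bf C}=-\ell{\bf C}+{\bf C}$. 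Collecting the coefficients proportional to $\ell$ into the bracket $\ell\bigl(\log\det\Delta_{\pmb\beta}+\mathcal C(\beta_0)+\mathcal C(\beta_\infty)-{\bf C}\bigr)$ and leaving the single $+{\bf C}$ at the end reproduces the statement. There is no genuine analytic obstacle here: the proof is entirely a careful substitution, and the only thing requiring attention is the bookkeeping of the three orbits together with the observation that the unramified white points contribute nothing, so that the whole dependence on $\beta_1$ enters only implicitly through the function $\Psi$.
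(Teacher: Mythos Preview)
Your proposal is correct and follows essentially the same approach as the paper: the paper's proof simply states that the assertion is an immediate consequence of Theorem~\ref{main} together with the asymptotics~\eqref{asympt_phi} and the calculation $C_f=0$ in~\eqref{calcCf}, which is exactly the specialization you carry out in detail. Your explicit observation that the $\ell$ unramified white points contribute nothing to each of the three sums is the only bookkeeping step, and your term-by-term verification matches the paper's intended (but unwritten) computation.
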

\begin{proof} Recall that  $\Psi(\beta_1,\beta_2,\beta_3)=\Phi(\beta_1,\beta_2,\beta_3)+ \log 2$ with explicit function $\Phi$ from~\cite[Prop. A.2)]{KalvinLast}. This implies~\eqref{eqn_Psi}, where $\Gamma$ stands for the Gamma function.

 Now the assertion is an immediate consequence of Theorem~\ref{main} together with the asymptotics~\eqref{asympt_phi}, and the calculation of $C_f$ in~\eqref{calcCf}. 
\end{proof}

\begin{example}[Dihedra]{\rm
    For  the determinant of the Gaussian curvature $2\pi(\beta+2/\ell)$  area $\ell$ regular dihedron  with $\ell$ conical singularities of order  $\beta$ we obtain 
\be\label{DDIH}
\ba
\log { \det \Delta^\ell_{Dihedron}} = &  \ell\log \det \Delta_{\pmb\beta} + 2\ell\mathcal C\left(\frac 1 \ell -1\right)  
\\
&+\frac {\ell^2- 1 } 3    \Psi\left(\frac 1 \ell -1,\beta,\frac 1 \ell -1\right)    +\frac 1 3     \log \ell+ (1-\ell){\bf C}.
\ea
\ee
This is a direct consequence of Theorem~\ref{CTr}, where we take 
$$
\pmb\beta=\left(\frac 1 \ell -1 \right)\cdot 0+\beta\cdot 1+ \left(\frac 1 \ell -1 \right)\cdot \infty.
$$

In particular, when $\beta=-2/\ell$, we obtain the determinant of the flat regular dihedron of area $\ell$. In the case $\beta=0$, we obtain a surface isometric to the round sphere in $\Bbb R^3$ of area $\ell$ and its determinant.
Finally, as $\beta\to -1^+$ the determinant increases without any bound in accordance with the asymptotics
$$
\ba
\log { \det \Delta^\ell_{Dihedron}} =\frac \ell {12}   \left(  -2\log(\beta+1) +\log \left(1-\frac 2 \ell\right)+\log 2\pi -2 +24\zeta_R'(-1)   \right)\frac 1 {\beta+1}
\\
 -\frac \ell  2 \log(\beta+1) +O(1)
 \ea
$$
of the right-hand side in~\eqref{DDIH}. In the limit $\beta=-1$ we obtain a surface of Gaussian curvature $2\pi(2/\ell-1)$ with $\ell$ cusps. The spectrum of the corresponding Laplacian is no longer discrete~\cite{Judge,Judge2}.

}
\end{example}

\begin{example}[Tetrahedron]{\rm Here we find the spectral determinant of Laplacian for a constant curvature regular tetrahedron of total area $4\pi$ with (four) conical singularities of order $\beta$. Or, equivalently, the spectral determinant of the Platonic surface of Gaussian curvature $2\beta+1$ with four faces.    Up to a rescaling,  this is a particular case of Theorem~\ref{CTr}, that corresponds to the choice $\ell=3$ and 
$$
\pmb\beta=\frac {\beta -2}3\cdot 0+\beta \cdot 1 + \left (-\frac 2 3\right) \cdot\infty.
$$ 
Here and in the remaining part of this section we use the standard rescaling property~\cite[Sec. 1.2]{KalvinJFA} of the determinant 
$$
 \log \det \Delta^{4\pi}_{f^*\pmb\beta}=\log \det \Delta_{f^*\pmb\beta}-\zeta_{f^*\pmb\beta}(0) \log \frac {4\pi}{\deg f}
$$
in order to normalize the total area to $4\pi$, where
$$
\zeta_{f^*\pmb\beta}(0)=\frac {|f^*\pmb\beta|+2}{6}-\frac 1{12}\sum_k \left((f^*\pmb\beta)_k+1-\frac 1 {(f^*\pmb\beta)_k+1}\right)-1.
$$
As a result, in the case $\beta=0$, when all conical singularities disappear, we obtain a surface isometric to the standard unit sphere $x_1^2+x_2^2+x_3^2=1$ in $\Bbb R^3$ and its determinant
$$
\log { \det \Delta} =1/2-4\zeta'_R(-1).
$$

In general,  for the constant curvature regular tetrahedron of total area $4\pi$ with conical singularities of order $\beta$,  Theorem~\ref{CTr} gives
\be\label{DetTetr1}
\ba
\log { \det \Delta^{4\pi}_{Tetrahedron}} &  =   3 \left(\log \det \Delta_{\pmb\beta} + \mathcal C\left(\frac{\beta-2}3\right)   +\mathcal C\left(-\frac 2   3\right)   \right)
\\&
+\frac 4 3  \left(\frac  1 {\beta+1}   \Psi\left(\frac{\beta-2}3,\beta,-\frac 2 3\right) +{\Psi\left(-\frac 2 3,\beta,\frac{\beta-2}3\right)}  \right)  
\\&
   -\frac 1 3   \left( \beta-3+\frac 1 { \beta+1}  \right)\left(\log (4\pi) -\frac 1 2 \log 3\right)    - \mathcal C\left( \beta \right)   -2 {\bf C},
\ea
\ee
where the right hand side is an explicit function of $\beta$. A graph of this function is depicted in Fig.~\ref{Platonic} as a solid line.

\begin{figure}[h]
 \centering\includegraphics[scale=.8]{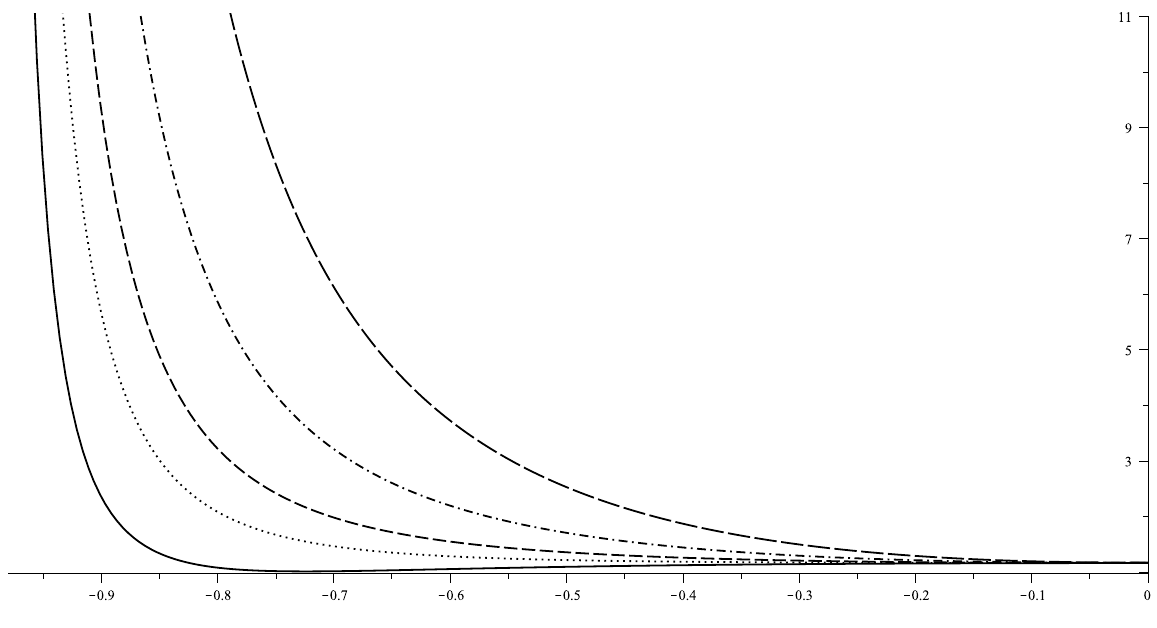}
 \caption{Graphs of the logarithm of the spectral determinant of Laplacian on the surfaces of Platonic solids of area $4\pi$ as a function of the order $\beta\in(-1,0]$ of the conical singularities: {\bf a.} Regular Tetrahedron of Gaussian curvature $2\beta+1$ (solid line), {\bf b.} Regular Octahedron of Gaussian curvature $3\beta+1$ (dotted line), {\bf c.} Regular Cube of Gaussian curvature $4\beta+1$ (dashed line), {\bf d.} Regular Icosahedron of Gaussian curvature $6\beta+1$ (dash-dotted line), {\bf e.} Regular Dodecahedron  of Gaussian curvature $10\beta+1$ (long-dashed line). 
 The point $(0, 1/2-4\zeta'_R(-1))\approx (0,1.16)$ on the graphs corresponds to the logarithm of the spectral determinant of the unit round sphere in $\Bbb R^3$. As $\beta\to -1^+$, the determinants increase without any bound in accordance with the asymptotics~\eqref{IdealTetrahedron},~\eqref{IdealOctahedron},~\eqref{IdealCube},~\eqref{IdealIcosahedron}, and~\eqref{IdealDodecahedron}. In the limit $\beta=-1$,  the conical singularities turn into cusps, and one obtains the ideal Platonic surfaces; the spectrum of the corresponding Laplacians is no longer discrete.
  }
 \label{Platonic}
\end{figure}

In particular, in the case $\beta=-1/2$ we obtain a surface  $(\overline{\Bbb C}_x, f^*m_{\pmb\beta})$ isometric to the surface of a Euclidean regular tetrahedron. Note that $\mathcal C(-1/2)$ can be evaluated as in~\eqref{C(-1/2)}.
 As a result, the  formula~\eqref{DetTetr1} for the determinant reduces to
$$
\log { \det \Delta^{4\pi}_{Tetrahedron}}|_{\beta=-1/2}=  \log \frac 4 3  - 3\log \Gamma\left(\frac 2 3\right) +\frac 3 2 \log \pi. 
$$
Alternatively, the latter expression for the determinant can be obtained by applying the partially heuristic Aurell-Salomonson formula~\cite{AS2} to the explicitly evaluated pullback  of the  flat metric 
$$
m_{\pmb\beta}=c^2 |z|^{-1}|z-1|^{-1}  {|dz|^2}$$
by $f(x)=x^3$, where $c$ is a scaling coefficient that normalizes the total area of $m_{\pmb\beta}$ to $4\pi/3$; for a rigorous proof of the Aurell-Salomonson formula we refer to~\cite[Sec. 3.2]{KalvinJFA}.

Finally,  as $\beta\to -1^+$, the determinant grows without any bound in accordance with the asymptotics
\be\label{IdealTetrahedron}
\log { \det \Delta^{4\pi}_{Tetrahedron}} =\left (  -\frac 2 3 \log (\beta+1) -\frac 2 3   +8 \zeta_R'(-1)  \right)\frac 1 {\beta+1} -2  \log (\beta+1)+ O(1) 
\ee
of the right hand side in~\eqref{DetTetr1}, cf. Fig~\ref{Platonic}.
 In the limit $\beta=-1$  we get a surface isometric to an ideal tetrahedron: a surface of Gaussian curvature $-2$ with four cusps;  the spectrum of the  Laplacian on an ideal tetrahedron is not discrete~\cite{Judge,Judge2}. 
}
\end{example}

\begin{example}[Octahedron]\label{EOctahedron}{\rm  Here we find the determinant of Laplacian for a constant curvature regular octahedron of area $4\pi$ with  (six) conical singularities of order $\beta$. Or, equivalently, the spectral determinant of the Platonic surface of Gaussian curvature $3\beta+1$ with  eight faces.  In Theorem~\ref{CTr}  we substitute $\ell=4$ and 
\be\label{ocDiv}
\pmb\beta=\frac {\beta-3} 4\cdot 0+\beta \cdot 1 +\frac {\beta-3} 4 \cdot\infty. 
\ee
As a result, after an appropriate rescaling,  we obtain
\be\label{DetOct1}
\ba
\log  \det \Delta^{4\pi}_{Octahedron}   =      4\log \det \Delta_{\pmb\beta}        -  \left( \beta+1+\frac 1 {\beta+1} \right)\left(\frac 2 3 \log 2 +\frac 1 2 \log \pi\right) 
\\
+\frac 5 3 \log \pi+\frac 5 {\beta +1} \Psi\left(\frac {\beta-3} 4,\beta,\frac {\beta-3} 4\right)
 +2\log 2+8 \mathcal C\left(\frac {\beta-3} 4\right)   - 2\mathcal C\left(  \beta \right)-3{\bf C},
\ea
\ee
where the right hand side is an explicit function of $\beta$. A graph of this function is depicted in Fig.~\ref{Platonic} as a dotted line.

In the case  $\beta=0$ we obtain a surface isometric to the standard unit sphere in $\Bbb R^3$ and a representation for its determinant in terms of the determinant of spherical $(4,\infty,4)$ double triangle, see also Example~\ref{Spindles} below.

In the case $\beta=-1/3$ we obtain a surface  $(\overline{\Bbb C}_x, f^*m_{\pmb\beta})$ isometric to the (flat) surface of Euclidean regular octahedron.
 The formula~\eqref{DetOct1} for the determinant reduces to
$$
\log  \det \Delta^{4\pi}_{Octahedron} |_{\beta=-1/3}=6\zeta_R'(-1) + \frac {35} {24}\log \frac 4 3  - \frac {13} 2 \log \Gamma\left (\frac 2 3\right) + \frac {13} 4 \log \pi. 
$$

Let us also note, that in the case $\beta=-1/2$ we get the tessellation of the singular sphere $(\overline{\Bbb C}_x, f^*m_{\pmb\beta}|_{\beta=-1/2})$ by the hyperbolic $(2,3,8)$-triangle. The surface  $(\overline{\Bbb C}_x, f^*m_{\pmb\beta}|_{\beta=-1/2})$, where $\pmb\beta$ is the divisor~\eqref{ocDiv},  is isometric to a regular hyperbolic octahedron with conical singularities of angle $\pi$. This is remarkable, as a double of  $(\overline{\Bbb C}_x, f^*m_{\pmb\beta}|_{\beta=-1/2})$ is the Bolza curve, known as the most symmetrical genus two smooth hyperbolic surface, see e.g.~\cite{KW}.  To the best of our knowledge, the exact value of the spectral determinant of the Bolza curve, endowed with the smooth Gaussian curvature $-1$ metric, is not yet known. For a numerical study see~\cite{StrUs}.

Finally, as $\beta\to -1^+$, the determinant grows without any bound in accordance with the asymptotics
\be\label{IdealOctahedron}
\log  \det \Delta^{4\pi}_{Octahedron}= \left(-\log(\beta+1) +\frac 1 2 \log 2 -1+12\zeta_R'(-1)     \right)\frac 1 {\beta+1}-3\log (\beta+1)+O(1)
\ee
of the right-hand side in~\eqref{DetOct1}.
 In the limit $\beta=-1$  we get a surface isometric to an ideal octahedron: a surface of Gaussian curvature $-2$ with six cusps, cf.~\cite{Judge,Judge2}.  The spectrum of the corresponding Laplacian is no longer discrete. 

}
\end{example}

\begin{example}[Spindles]\label{Spindles}{\rm
Let $m_{\pmb\beta}^S=e^{2\phi}|dz|^2$ be the metric of a spindle with two antipodal singularities~\cite{TroyanovSp}, where
$$
\phi(z)=\beta\log |z|+ \log 2+\log (\beta+1)-\log (1+|z|^{2\beta+2} ).
$$
The (regularized) Gaussian curvature of  $m_{\pmb\beta}^S$ is $1$, and the total area is $S= 4\pi(\beta+1)$. The metric represents the divisor $\pmb\beta=\beta_0\cdot 0+\beta_1\cdot 1+\beta_\infty\cdot\infty $ with $\beta_0=\beta_\infty=:\beta\in(-1,\infty)$ and $\beta_1=0$. The spindle $(\overline{\Bbb C}_z,m_{\pmb\beta}^S )$
 is isometric to the spherical double triangle glued from two copies of a spherical triangle (a lune) with internal angles $\bigl(\pi(\beta+1), \pi, \pi(\beta+1)\bigr)$, cf. Fig.~\ref{Cyclic triangulation}. 
 
  For the asymptotics of the metric potential
 we have
 $$
 \ba
 \phi(z)&=\beta\log |z|+\phi_0   +o(1),\quad z\to 0,\quad \phi_0=\log2(\beta+1),
 \\
  \phi(z)&=-(\beta+2)\log|z|  +\phi_\infty  +o(1), \quad z\to \infty,\quad \phi_\infty=\log 2(\beta+1).
 \ea
 $$
Clearly, for the pullback of $m_{\pmb\beta}^S$  by the Shabat polynomial  $f(x)=x^\ell$  we have
$$
f^*m_{\pmb\beta}^S=  \frac{4 \ell^2(\beta+1)^2|x|^{2(\ell(\beta+1)-1)}|d x|^2}{(1+|x|^{2\ell( \beta +1) } )^2},\quad f^*\pmb\beta= (\ell(\beta+1)-1)\cdot 0+ (\ell(\beta+1)-1)\cdot\infty. 
$$

The surface $(\overline{\Bbb C}_x,f^*m_{\pmb\beta}^S)$ is isometric to the surface glued from $\ell$ copies of the spindle $(\overline{\Bbb C}_z, m^S_{\pmb\beta})$ with a cut from the conical point at $z=0$ to the conical  point at $z=\infty$. Or, equivalently,  $(\overline{\Bbb C}_x,f^*m_{\pmb\beta}^S)$ is triangulated by $2\ell$ bicolored copies of a spherical  triangle with internal angles $\bigl(\pi(\beta+1), \pi, \pi(\beta+1)\bigr)$ and unit Gaussian curvature, cf. Fig.~\ref{Cyclic triangulation}. 
 The surface  $(\overline{\Bbb C}_x,f^*m_{\pmb\beta}^S)$ is again a spindle with two antipodal conical singularities:   the metric $m_{\pmb\beta}^S$ coincides with $f^*m_{\pmb\beta}^S$ after the replacement of $\beta$ by $\ell(\beta+1)-1$ (and $z$ by $x$).

 As is known~\cite{KalvinLast,KalvinJFA,Klevtsov,SpreaficoZerbini}, the spectral determinant of the Friedrichs Laplacian $\Delta^S_{\pmb\beta}$ on the spindle $(\overline{\Bbb C}_z,m^S_{\pmb\beta})$ satisfies 
 \be\label{spindledet}
 \log\frac {\Delta^S_{\pmb\beta}}{S}=\frac {\beta+1} 3-\frac 1 3 \left( \beta+1 +\frac 1 {\beta+1}  \right)\log 2(\beta+1)  -2\mathcal C(\beta)  
 +{\bf C},\quad \beta>-1.
 \ee

For the spectral determinant of the spindle $(\overline{\Bbb C}_x,f^*m_{\pmb\beta}^S)$  Theorem~\ref{PBdet}  (after an appropriate rescaling) gives
\be\label{cyclicsphere}
\ba
\log\frac {\det\Delta^{\ell S}_{f^*\pmb\beta}}{\ell S}=\ell \Bigl(\log\frac {\det\Delta^{S}_{\pmb\beta}}{4\pi(\beta+1)} +2\mathcal C(\beta)-{\bf C}\Bigr) +\frac 1 3\left(\ell-\frac 1 {\ell}\right)\frac {\log2 (\beta+1)}{\beta+1}
  \\
      -\frac 1 6 \left( 2\ell(\beta+1)+\frac 2 { \ell(\beta+1)}  \right)\log \ell
-2\mathcal C\left(\ell(\beta+1)-1\right)  +{\bf C}.
  \ea
 \ee
As expected, after the substitution~\eqref{spindledet} and the replacement of $\ell(\beta+1)-1$ by $\beta$, the equality~\eqref{cyclicsphere} reduces to the one in~\eqref{spindledet}.

In particular, for $\beta=\frac 1 \ell -1$ the pullback of the spindle metric $m_{\pmb\beta}^S$ by $f$ is the metric of the standard round sphere of total area $\ell S=4\pi$. In this case, the equality~\eqref{cyclicsphere} expresses the determinant of Laplacian of the standard sphere 
$$\det\Delta^{4\pi}_{f^*\pmb\beta}=\exp(1/2-4\zeta'_R(-1))
$$
 in terms of the determinants $\det\Delta^{4\pi/\ell}_{\pmb\beta}$ of the spherical double triangle (or, equivalently,  of the bicolored double lune) corresponding to the cyclic triangulation of the sphere via the Shabat polynomial $f(x)=x^\ell$,  cf. Fig.~\ref{Cyclic triangulation}. 
 }
\end{example}

\subsection{Determinant for dihedral triangulation}
 Consider the Belyi function 
\be\label{BelyiDihedral}
f(x)=1-\left(\frac {1-x^\ell}{1+x^\ell}\right)^2,\quad \ell\in\Bbb N. 
\ee
This is a ramified covering of degree $2\ell$. For the ramification divisor of $f$ we have
$$
{\pmb {f}} = (\ell-1)\cdot 0 +  1 \cdot \{\sqrt[\ell]{1} \} +  1 \cdot \{\sqrt[\ell]{-1} \} +(\ell-1)\cdot\infty,\quad |{\pmb {f}}|
=4\ell-2.
$$
This Belyi function defines a tessellation of the standard round sphere with  $(2,2,\ell)$-triangles, cf. Fig~\ref{DihTriang}. 
As we show in the proof of Theorem~\ref{Jan20} below, to this Belyi map there corresponds the constant
\be\label{CfDT}
C_f=\frac 2 3 \left(\ell-\frac 1 \ell\right)\log 2. 
\ee
The Belyi map $f$ sends the marked points listed in the ramification divisor $\pmb f$ 
 to the points $0,1,\infty$ as follows:
$$
f(0)=0,\quad f(\infty)=0, \quad f( \sqrt[\ell]{-1 })=\infty,  \quad f( \sqrt[\ell]{1 })=1.
$$
For the pullback divisor of $\pmb \beta=  \beta_0\cdot 0+\beta_1\cdot 1+\beta_\infty \cdot \infty$ by $f$ we obtain
$$
f^*\pmb\beta=  (\ell(\beta_0+1)-1)\cdot 0 + ( 2\beta_1+1) \cdot \{\sqrt[\ell]{1} \} +  (2\beta_\infty+1) \cdot \{\sqrt[\ell]{-1} \} +  (\ell(\beta_0+1)-1)\cdot\infty.
$$

\begin{figure}[h]
 \centering\includegraphics[scale=.3]{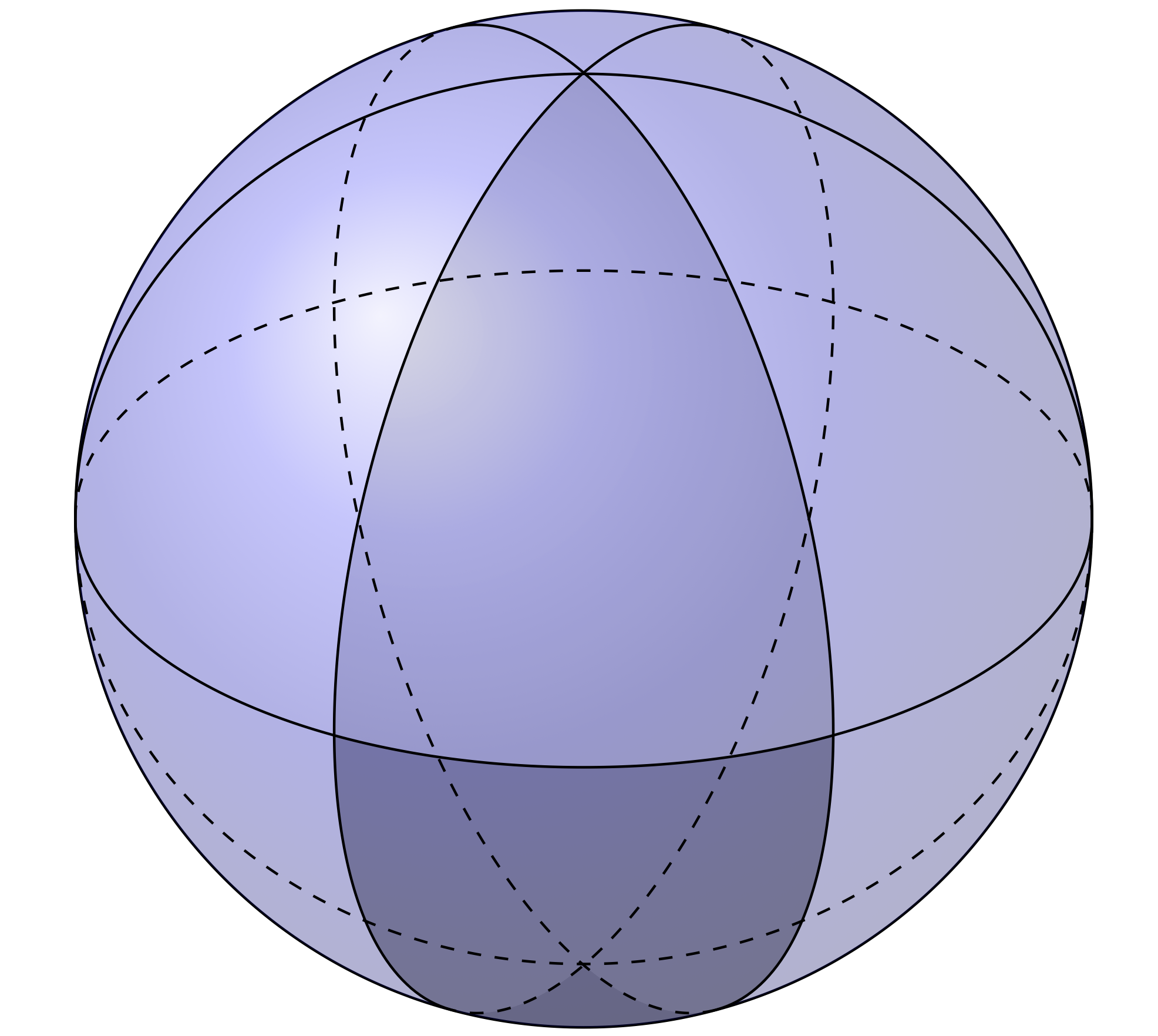}
  \caption{Dihedral triangulation}\label{DihTriang}
\end{figure}

\begin{theorem}[Dihedral triangulation]\label{Jan20} 
  Let $m_{\pmb\beta}$ be the unit area Gaussian curvature $2\pi(|\pmb\beta|+1)$ metric of $S^2$-like double triangle, see Section~\ref{PrelimTriangulation}. For the spectral zeta-regularized determinant of  the Friedrichs Laplacian $\Delta_{f^*\pmb\beta}$ corresponding to the area $2\ell$ pullback metric $f^*m_{\pmb\beta}$ on the Riemann sphere $\overline{\Bbb C}_x$, where $f$ is the dihedral Belyi map~\eqref{BelyiDihedral}, we have the following explicit expression:
$$
\ba
\log & \frac { \det\Delta_{f^*\pmb\beta}} {2\ell} =   2\ell\Bigl( \log \det \Delta_{\pmb\beta}   +\mathcal C_{\pmb\beta}   -{\bf C}\Bigr)+C_f
\\
&+\frac 1 3  \left(\ell-\frac 1 {\ell}\right)\frac {\Psi(\beta_0,\beta_1,\beta_\infty)}{\beta_0+1} +\frac \ell 4\left(\frac {\Psi(\beta_1,\beta_0,\beta_\infty)}{\beta_1+1} +\frac { \Psi(\beta_\infty,\beta_1,\beta_0)}{\beta_\infty+1} \right)
\\
    &    -\frac 1 3   \left( \ell(\beta_0+1)+\frac 1 { \ell(\beta_0+1)}  \right)\log \ell -\frac \ell 3 \left(   \beta_1+\beta_\infty+2+\frac 1{4\beta_1+4}+\frac 1{4\beta_\infty+4}\right)   \log 2
 \\
 &  -2 \mathcal C\left(  \ell(\beta_0+1)-1 \right)-\ell\Bigl(\mathcal C\left(  2\beta_1+1\right)+\mathcal C\left(  2\beta_\infty+1\right)\Bigr)
   +{\bf C},
\ea
$$
where $\mathcal C_{\pmb\beta} =\mathcal C(\beta_0)+\mathcal C(\beta_1)+\mathcal C(\beta_\infty)$ and $C_f$ is the same as in~\eqref{CfDT}.

Recall that  $\det \Delta_{\pmb\beta}$ stands for an explicit function~\eqref{CalcVar}, whose values are the determinants of the unit area $S^2$-like double triangles of Gaussian curvature $2\pi(|\pmb\beta|+2)$ tessellating the singular sphere $(\overline{\Bbb C}_x,f^*m_{\pmb\beta})$. The function $\beta\mapsto \mathcal C(\beta)$ is defined in~\eqref{cbeta}, the function $\Psi$ is the same as in~\eqref{eqn_Psi}, and $\bf C$ is the constant introduced in~\eqref{bfC}. 
\end{theorem}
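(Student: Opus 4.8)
The plan is to derive the formula as a direct specialization of Theorem~\ref{main}, in the same spirit as the proof of Theorem~\ref{CTr}: everything reduces to (i) reading off the ramification data of $f$ over $\{0,1,\infty\}$, and (ii) evaluating the constant $C_f$ from Proposition~\ref{PBdet}. No new analytic input is required beyond these two bookkeeping tasks.

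First I would record the four orbits of preimages together with their ramification orders and leading coefficients. Expanding $f$ near the relevant points shows that $x=0$ and $x=\infty$ are the two preimages of $z=0$, each with $\ord_k f=\ell-1$, since $f(x)=4x^\ell+\cdots$ as $x\to0$ and $f(x)=4x^{-\ell}+\cdots$ as $x\to\infty$; the $\ell$-th roots of unity are the preimages of $z=1$, each with $\ord_k f=1$, since $f(x)-1$ has a double zero there; and the $\ell$-th roots of $-1$ are the poles of $f$, hence the preimages of $z=\infty$, again with $\ord_k f=1$, since $f$ has a double pole at each. This reproduces the divisors $\pmb f$ and $f^*\pmb\beta$ stated before the theorem. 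Along the way I would read off the leading coefficients needed in Proposition~\ref{PBdet}: $|c_k|=4$ at the two preimages of $0$, $|c_k|=\ell^2/4$ at each root of unity, and $|c_k|=4/\ell^2$ at each root of $-1$ (all these points lie on the unit circle, so the phases drop out of $|c_k|$). Here I would be careful to use the convention of Proposition~\ref{PBdet} and Remark~\ref{AvsCn} in the two special cases: $x=\infty$ is a \emph{non-polar} preimage of $0$, whereas the $\sqrt[\ell]{-1}$ are poles.

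Next I would evaluate $C_f$ from~\eqref{C_f}. Feeding in the data above, each of the four orbits contributes a $\log\ell$-term and a $\log2$-term. The $\log\ell$-terms come with coefficients $(\ell+1)$, $\ell$, $-3\ell$, and $(\ell-1)$ from the preimages of $0$, the roots of unity, the roots of $-1$, and $x=\infty$ respectively; these sum to zero. The surviving $\log2$-terms combine to $4\bigl(\ell-\tfrac1\ell\bigr)\log2$, giving $C_f=\tfrac23\bigl(\ell-\tfrac1\ell\bigr)\log2$, which is~\eqref{CfDT}.

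Finally I would substitute the data into~\eqref{S20STMNT} and group the four sums termwise, using $\phi_0=\Psi(\beta_0,\beta_1,\beta_\infty)$, $\phi_1=\Psi(\beta_1,\beta_0,\beta_\infty)$, $\phi_\infty=\Psi(\beta_\infty,\beta_1,\beta_0)$. In the first sum the factor $\ord_k f+1-\tfrac1{\ord_k f+1}$ equals $\ell-\tfrac1\ell$ at the two preimages of $0$ and $\tfrac32$ at each of the $2\ell$ simple branch points; after dividing by $6$ this produces the prefactors $\tfrac13(\ell-\tfrac1\ell)$ and $\tfrac\ell4$ in front of the $\Psi$-terms. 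The second sum is handled the same way using $(f^*\pmb\beta)_k+1=\ell(\beta_0+1)$, $2(\beta_1+1)$, $2(\beta_\infty+1)$, yielding the $\log\ell$ and $\log2$ lines. The $\mathcal C$-sum collapses to $2\ell\,\mathcal C_{\pmb\beta}-2\mathcal C(\ell(\beta_0+1)-1)-\ell\bigl(\mathcal C(2\beta_1+1)+\mathcal C(2\beta_\infty+1)\bigr)$, and the constant term $-(\deg f-1)\mathbf C=-(2\ell-1)\mathbf C$ splits as $-2\ell\mathbf C+\mathbf C$, so that $2\ell\,\mathcal C_{\pmb\beta}$ and $-2\ell\mathbf C$ fold into the first displayed line while the leftover $+\mathbf C$ lands in the last. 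Collecting all contributions reproduces the four lines of the assertion. The argument is entirely mechanical; the only genuine care needed is in the $c_k$-conventions of the second paragraph, so I expect that to be the one place where a sign or an inversion could slip.
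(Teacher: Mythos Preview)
Your approach is exactly the paper's: specialize Theorem~\ref{main} to the dihedral Belyi map, compute the leading coefficients $|c_k|$ at each preimage orbit, feed them into~\eqref{C_f} to obtain $C_f=\tfrac{2}{3}\bigl(\ell-\tfrac{1}{\ell}\bigr)\log 2$, and then read off the displayed formula term by term. Your value $|c_k|=4/\ell^2$ at the poles $x_k\in\{\sqrt[\ell]{-1}\}$ is in fact the correct one (the paper records $\ell^2/4$ there, which appears to be a typo; with your value the $\log\ell$ terms cancel as you say and $C_f$ comes out right, whereas with $\ell^2/4$ they would not).
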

\begin{proof}
For the derivative of the Belyi function~\eqref{BelyiDihedral} we have
$$
f'(x)=\frac{4\ell x^{\ell-1}(1-x^\ell)}{(x^\ell+1)^3}.
$$
As a result,  we immediately obtain  the asymptotics in vicinities of the critical points of the form~\eqref{asymp_log_f1}. The first non-zero coefficients $c_k$ in the Taylor expansions  (cf. Proposition~\ref{PBdet}) satisfy
$$
|c_k|=\left\{
\begin{array}{cc}
4 , & x_k=0,    \\
 \ell^2/4, &   x_k\in\{    \sqrt[\ell]{1 }\},   \\
4,  &    x_k=\infty,\quad  k=n=2\ell+2.     
\end{array}
\right.
$$

Similarly,  we obtain the asymptotics in vicinities of the poles of the form~\eqref{asymp_log_f2} with the first non-zero Laurent coefficients  $c_k$ satisfying 
$$
 |c_k|=\ell^2/4, \quad  x_k\in\{\sqrt[\ell]{- 1 }\}.
$$

Now the expression~\eqref{C_f} for $C_f$ implies~\eqref{CfDT}. As a result, the assertion is an immediate consequence of Theorem~\ref{main} together with the asymptotics~\eqref{asympt_phi}. 
\end{proof}

\begin{example}[Dihedra]{\rm  Here we deduce an alternative formula for the determinant of the regular dihedron of Gaussian curvature $K=2\pi(\beta+1/\ell)$ and area $2\ell$ with $2\ell$ conical singularities of order $\beta$: In Theorem~\ref{Jan20} we take 
$$
\pmb\beta=\left(\frac 1 \ell -1 \right)\cdot 0+\frac{\beta-1} 2 \cdot 1+\frac{\beta-1} 2\cdot \infty,
$$
and obtain 
$$
\ba
\log & \,{ \det\Delta^{2\ell}_{Dihedron}}  =   2\ell\Bigl( \log \det \Delta_{\pmb\beta}   +\mathcal C_{\pmb\beta}       -\mathcal C\left(  \beta\right) \Bigr) +    \frac 1 3  \log \ell 
   +(1-2\ell){\bf C}
\\
&+\frac {\ell^2-1 } 3   {\Psi\left ( \frac 1 \ell -1,\frac{\beta-1} 2,\frac{\beta-1} 2\right)} 
+\frac \ell { \beta+1}{ \Psi\left (\frac{\beta-1} 2,\frac{\beta-1} 2,\frac 1  \ell -1\right)}
\\
    &    +\left(\frac 2 3 \left(\ell-\frac 1 \ell\right) -\frac \ell 3 \left(  \beta+1+\frac 1{\beta+1}\right)   +1    \right)  \log 2.
\ea
$$

}

\end{example}

\begin{example}[Octahedron]{\rm
We obtain an alternative formula for the determinant of Laplacian on a regular octahedron of Gaussian curvature $3\beta+1$ with (six) conical singularities of order $\beta$: In Theorem~\ref{Jan20} we take  $\ell=2$ and 
$$
\pmb\beta=\frac{\beta-1} 2\cdot 0+\frac{\beta-1} 2\cdot 1  +\frac{\beta-1} 2\cdot\infty.$$
This together with the standard rescaling property of determinants implies
$$
\ba
\log\, &  { \det\Delta^{4\pi}_{Octahedron}}  =   4\log \det \Delta_{\pmb\beta}    -   \left( \beta+1+\frac 1 { \beta +1}  \right)\left(\log 2+\frac 1 2 \log \pi \right)
+\frac 5 3 \log \pi
\\
&+\frac 3 {{\beta+1} }    {\Psi\left(\frac{\beta-1} 2,\frac{\beta-1} 2,\frac{\beta-1} 2\right)}    +3\log 2+12\mathcal C\left(\frac{\beta-1} 2\right)-6\mathcal C\left( \beta\right)  -3{\bf C},
\ea
$$
where the right hand side is an explicit function of $\beta$; this expression is equivalent to the one in~\eqref{DetOct1}. A graph of this function is depicted in Fig.~\ref{Platonic} as a dotted line. }
\end{example}

\subsection{Determinant for tetrahedral triangulation}

The tetrahedral Belyi map is given by the function
\be\label{BelyiTetrahedral}
f(x)=-64\frac{(x^3+1)^3}{(x^3-8)^3x^3},\quad \deg f=12.
\ee
For the ramification divisor, we obtain
$$
\pmb f= 2\cdot\left\{0,\sqrt[3]{8}\right\}+1\cdot\left\{ \sqrt[3]{-10\pm6\sqrt  3}\right\} + 2\cdot\left\{\sqrt[3]{-1},\infty\right\} ,\quad  |\pmb f|=22. 
$$
The Belyi map sends the marked points listed in the divisor $\pmb f$ to the points $0,1,\infty$ of the target Riemann sphere $\overline{\Bbb C}_z$ as follows: 
$$
f(0)=\infty, \quad f(\sqrt[3]{-1})=0,\quad f\left(\sqrt[3]{-10\pm6\sqrt  3}\right)=1, \quad f(\sqrt[3]{8})=\infty, \quad f(\infty)=0.
$$
Here $ f(x)=1$ are the edge midpoints of a regular tetrahedron. The poles  $f(x)=\infty$ correspond to its vertices. 
The zeros $f(x)=0$, i.e. the roots of the numerator and $x=\infty$, are the centers of the faces. This defines a tessellation of the standard round sphere with spherical $(2,3,3)$-triangles, cf.~Fig~\ref{tetrahedral}.  A picture of the corresponding {\it dessin d'enfant} can be found e.g. in~\cite[Fig. 2]{Margot Zvonkin}. In the proof of Theorem~\ref{T_t} below we show that to the Belyi function~\eqref{BelyiTetrahedral} there corresponds the constant
\be\label{CfTT}
C_f=\log 2 +\frac 9 4 \log 3. 
\ee

For the pullback of the divisor $\pmb \beta=  \beta_0\cdot 0+\beta_1\cdot 1+\beta_\infty \cdot \infty$ by $f$ we obtain
$$
f^*\pmb\beta=     (3\beta_0+2)\cdot\left\{\sqrt[3]{-1},\infty\right\}+(2\beta_1+1)\cdot\left\{ \sqrt[3]{-10\pm6\sqrt  3}\right\}+(3\beta_\infty+2)\cdot \left\{0,\sqrt[3]{8}\right\}.
$$

\begin{figure}[h]
 \centering\includegraphics[scale=.3]{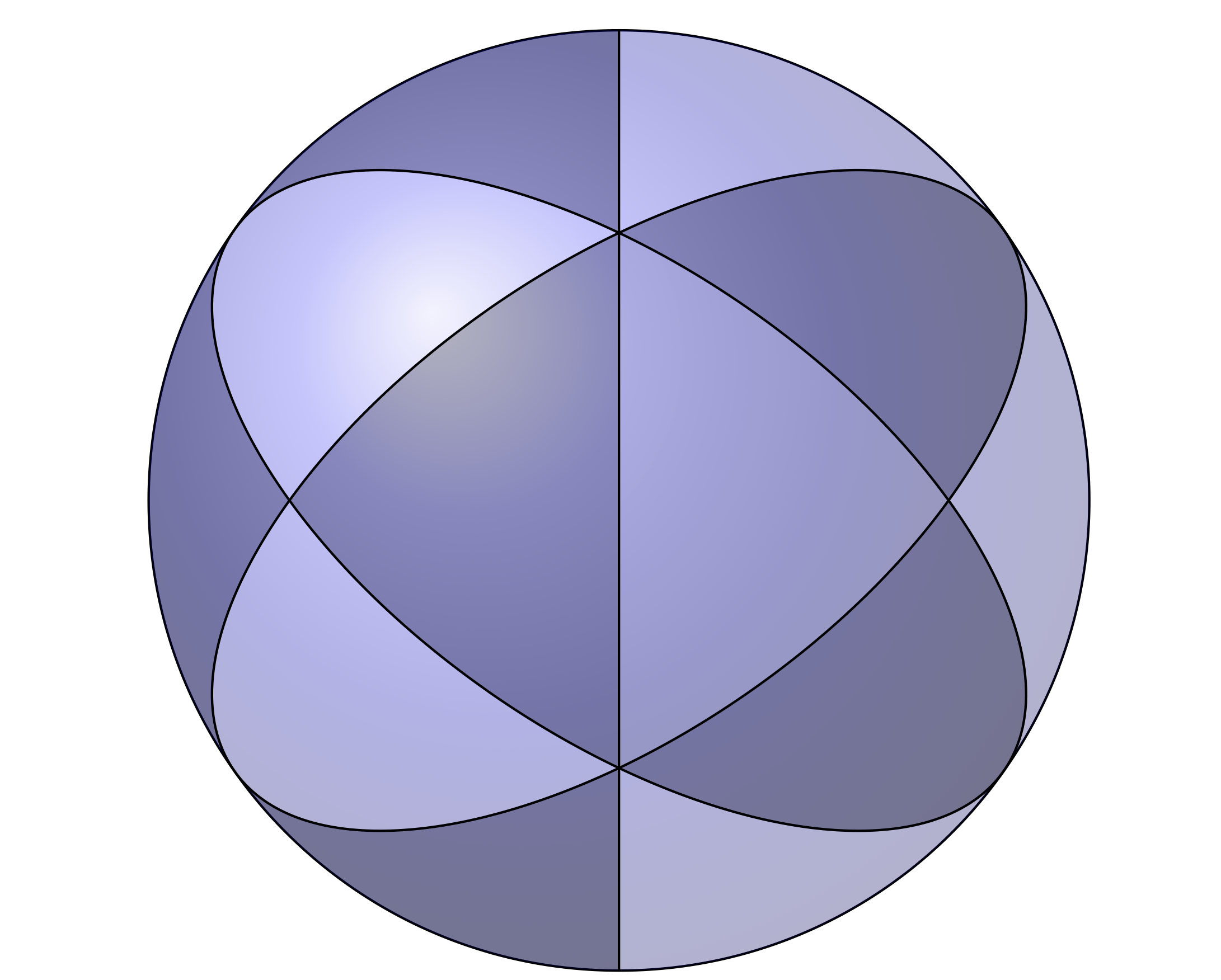}
 \caption{Tetrahedral triangulation}
 \label{tetrahedral}
\end{figure}

\begin{theorem}[Tetrahedral triangulation]\label{T_t}   Let $m_{\pmb\beta}$ be the unit area Gaussian curvature $2\pi(|\pmb\beta|+1)$ metric of $S^2$-like double triangle, see Section~\ref{PrelimTriangulation}.  For the determinant of the Friedrichs Laplacian $\det \Delta_{f^*\pmb\beta}$ corresponding to the pullback metric $f^*m_{\pmb\beta}$ of area $12$, where $f$ is the tetrahedral Belyi map~\eqref{BelyiTetrahedral}, we have
$$
\ba
\log & \frac { \det \Delta_{f^*\pmb\beta}} {12} =   12\Bigl( \log \det \Delta_{\pmb\beta}   +\mathcal C_{\pmb\beta}   -{\bf C}\Bigr)+ C_f
\\
&+ \frac {16} {9}\frac {\Psi(\beta_0,\beta_1,\beta_\infty)}{\beta_0+1} + \frac3 2 \frac {\Psi(\beta_1,\beta_0,\beta_\infty)}{\beta_1+1} + \frac {16} {9}\frac {\Psi(\beta_\infty,\beta_1,\beta_0)}{\beta_\infty+1}
\\
    &    -\frac 2 3   \left(3(\beta_0+\beta_\infty+2)+\frac 1 { 3(\beta_\infty+1)} +\frac 1 { 3(\beta_0+1)}  \right)\log 3 - \left(   2(\beta_1+1)+\frac 1{2(\beta_1+1)}\right)   \log 2
 \\
 &  -4\mathcal C\left(  3\beta_0+2 \right) -6\mathcal C\left(  2\beta_1+1\right)-4\mathcal C\left(  3\beta_\infty+2\right)
  +{\bf C},
\ea
$$
where  $\mathcal C_{\pmb\beta} =\mathcal C(\beta_0)+\mathcal C(\beta_1)+\mathcal C(\beta_\infty)$ and $C_f$ is the same as in~\eqref{CfTT}.

Recall that  $\det \Delta_{\pmb\beta}$ stands for an explicit function~\eqref{CalcVar}, whose values are the determinants of the unit area $S^2$-like double triangles of Gaussian curvature $2\pi(|\pmb\beta|+2)$ tessellating the singular sphere $(\overline{\Bbb C}_x,f^*m_{\pmb\beta})$. The function $\beta\mapsto \mathcal C(\beta)$ is defined in~\eqref{cbeta}, the function $\Psi$ is the same as in~\eqref{eqn_Psi}, and $\bf C$ is the constant introduced in~\eqref{bfC}. 
\end{theorem}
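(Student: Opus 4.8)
The plan is to follow exactly the route used in the proofs of Theorem~\ref{CTr} and Theorem~\ref{Jan20}: the final formula is an immediate consequence of Theorem~\ref{main} together with the asymptotics~\eqref{asympt_phi} and~\eqref{eqn_Psi}, so the only genuinely new work is the evaluation of the Belyi-dependent constant $C_f$, which I would carry out through Proposition~\ref{PBdet} and formula~\eqref{C_f}. First I would compute the logarithmic derivative of~\eqref{BelyiTetrahedral}, which collapses the apparent complexity into
\[
f'(x)=192\,\frac{(x^3+1)^2\,(x^6+20x^3-8)}{x^4\,(x^3-8)^4}.
\]
This exhibits the ramification structure already recorded in $\pmb f$ — double zeros of $f'$ at the cube roots of $-1$, simple zeros at the roots of $x^6+20x^3-8$ (the points with $f=1$), and fourth-order poles at $x=0$ and at $\sqrt[3]{8}$ — and yields at once the asymptotics of $\log|f'|$ of the forms~\eqref{asymp_log_f1} and~\eqref{asymp_log_f2} near every marked point.

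Next I would extract the first nonzero Taylor or Laurent coefficients $c_k$ required by Proposition~\ref{PBdet}. At the four ``easy'' orbits this is a direct leading-order expansion: at the pole $x=0$ one reads off $|c_k|=1/8$; at the three poles $x=\sqrt[3]{8}$, using $x_0^6=64$ and $x^3-8\approx 3x_0^2(x-x_0)$, one gets $|c_k|=27/8$; at the three zeros $x=\sqrt[3]{-1}$ one gets $|c_k|=64/27$; and at $x=\infty$, where $f(x)\sim -64\,x^{-3}$, one gets $|c_k|=64$. Each of these feeds into the appropriate one of the four cases of~\eqref{C_f}, the bookkeeping being the choice pole versus non-pole, $\infty$ versus finite, and $f(x_k)=\infty$ versus $f(x_k)\neq\infty$.

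The hard part will be the six points of the orbit $f=1$, i.e.\ the roots of $x^6+20x^3-8$, where $\ord_k f=1$ and hence $c_k=\tfrac12 f''(x_k)$. Since the numerator $x^6+20x^3-8$ vanishes there, differentiating it gives $f''(x_0)=192\,(x_0^3+1)^2(6x_0^5+60x_0^2)\big/\big(x_0^4(x_0^3-8)^4\big)$, and because $|c_k|$ involves $|x_0|=|t|^{1/3}$ with $t=x_0^3=-10\pm6\sqrt3$, the individual coefficients are irrational and split into two sub-orbits. The key observation, which I would emphasize, is that only the product $\prod_k|c_k|$ enters~\eqref{C_f}: using the symmetric-function identities $t_+t_-=-8$ and $t_++t_-=-20$, whence $(t_++1)(t_-+1)=-27$ and $(t_+-8)(t_--8)=216$, the product over the whole orbit collapses to $\prod_k|c_k|=3$. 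This single simplification is where care is required.

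With all $|c_k|$ in hand I would substitute into the four cases of~\eqref{C_f} and collect the $\log2$ and $\log3$ terms separately. I expect the coefficients of $\log2$ to sum to $1$ and those of $\log3$ to sum to $\tfrac94$ — in particular the $\log3$ from the orbit over $x=0$ cancelling that from $x=\infty$, the orbits over $\sqrt[3]{8}$ and $\sqrt[3]{-1}$ jointly contributing $2\log3$, and the $f=1$ orbit supplying the remaining $\tfrac14\log3$ — giving $C_f=\log2+\tfrac94\log3$, which is~\eqref{CfTT}. The stated expression for $\log(\det\Delta_{f^*\pmb\beta}/12)$ then follows directly from Theorem~\ref{main}: the orders $(f^*\pmb\beta)_k$ are read off from~\eqref{orders*} as $3\beta_0+2$, $2\beta_1+1$, and $3\beta_\infty+2$ on the three orbits, the uniformization coefficients $\phi_j=\Psi(\cdot,\cdot,\cdot)$ are those of~\eqref{asympt_phi}, and the orbit multiplicities $4$, $6$, $4$ together with $\ord_k f\in\{2,1,2\}$ produce the factors $\tfrac{16}{9}$, $\tfrac32$, $\tfrac{16}{9}$ in front of the $\Psi$-terms and the factors $4$, $6$, $4$ in front of the $\mathcal C$-terms.
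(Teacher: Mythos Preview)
Your approach is correct and is exactly the paper's: compute $f'$, read off the leading Taylor/Laurent coefficients $|c_k|$ at each marked point, feed them into~\eqref{C_f}, and then invoke Theorem~\ref{main} with the orbit data. One small slip to fix: over the six-point orbit $f=1$ the product is $\prod_k|c_k|=27$ (each of the two values $|c_\pm|$, with $|c_+|\,|c_-|=3$, occurs three times), which is what actually yields your claimed $\tfrac14\log 3$ contribution; note also that your values $|c_k|=1/8$ at $x=0$ and $27/8$ at $\sqrt[3]{8}$ are the correct leading Laurent coefficients of $f$---the paper's table lists the reciprocals $2^3$ and $2^3/3^3$, but the final $C_f$ is the same.
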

\begin{proof}
For the derivative of the Belyi function in~\eqref{BelyiTetrahedral} we have
$$
 f'(x)=\frac{192(x^3+1)^2(x^6+20x^3-8)}{x^4(x^3-8)^4}.
 $$
 In exactly the same way as in the proof of Theorem~\ref{Jan20} we obtain
 $$
 |c_k|=\left\{
\begin{array}{cc}
 2^6/3^3, &   x_k\in\{\sqrt[3]{-1}\},   \\
 2\sqrt 3 \pm 3, &   x_k\in\left\{\sqrt[3]{-10\pm 6\sqrt  3}\right\},   \\
 2^6,  &      x_k=\infty,\  k=n,\\
 2^3, &    x_k=0,\\
 2^3/3^3, & x_k\in \left\{\sqrt[3]8\right\}.
\end{array}
 \right. 
 $$
This together with the expression~\eqref{C_f} for $C_f$ implies the value stated in~\eqref{CfTT}. The remaining part of the assertion is a direct consequence of Theorem~\ref{main}.
\end{proof}

\begin{example}[Tetrahedron]{\rm
Here we deduce an alternative formula for the spectral determinant of a regular tetrahedron of Gaussian curvature $ K=2\beta+1 $: in Theorem~\ref{T_t} we substitute
$$
\pmb\beta=\left(-\frac 2 3 \right)\cdot 0+\left(-\frac 1 2 \right)\cdot 1+\frac{\beta-2}{3}\cdot\infty.
$$
As a result, after some rescaling we obtain
$$
\ba
\log & \det \Delta^{4\pi}_{Tetrahedron}  =   12\Bigl( \log \det \Delta_{\pmb\beta}   +\mathcal C_{\pmb\beta}  \Bigr)+\log 2 +\frac {7}{12}\log 3      +\frac 4 3 \log\pi
\\
&+ \frac {16} {3} {\Psi\left(-\frac 2  3,-\frac 1 2,\frac{\beta-2}{3}\right) + 3 \Psi\left(-\frac 1 2,-\frac 2 3,\frac{\beta-2}{3}\right)} + \frac {16} {3 (\beta+1)  } \Psi\left(\frac{\beta-2}{3},-\frac 1 2,-\frac 2 3\right)
\\
    &    -\frac 1 3   \left(\beta+1+\frac 1 { \beta+1}  \right)   \log 3\pi   -4\mathcal C\left(  \beta\right) -11{\bf C},
\ea
$$
where the right hand side is an explicit function of $\beta$. A graph of this function is depicted in Fig.~\ref{Platonic} as a solid line. 

In the case $\beta=0$ the surface $(\overline{\Bbb C}_x, f^*m_{\pmb\beta})$ is isometric to a unit sphere in $\Bbb R^3$. The sphere  $(\overline{\Bbb C}_x, f^*m_{\pmb\beta}|_{\beta=0})$ is tessellated by the double of $(2,3,3)$-triangle, and the above formula for the determinant is a representation for the determinant $ \det \Delta^{4\pi}_{Tetrahedron}|_{\beta=0}$ of the unit sphere in terms of the determinant of the double of spherical $(2,3,3)$-triangle. 
}
\end{example}

\subsection{Determinant for octahedral triangulation}
To the octahedral triangulation, there corresponds the Belyi function
\be\label{cube}
f(x)=-2^2 \cdot3^3\frac{(x^4+1)^4 x^4}{(x^8-14 x^4+1)^3},\quad \deg f=24.
\ee
In particular,  thanks to the identification of the Riemann sphere $\overline{\Bbb C}_x$  with the standard round sphere in $\Bbb R^3$ via the stereographic projection, $f$ describes the tessellation of the standard round sphere with spherical $(2,3,4)$-triangles, cf. Fig~\ref{Oct Triang}.
The ramification divisor of $f$ is 
\be\label{divisorF}
\pmb f=3\cdot\left\{0,\infty,\sqrt[4]{-1}\right\}+1\cdot\left\{\sqrt[4]{1}, \sqrt[4]{-17\pm 3\cdot 2 ^{5/2}}\right\}+2\cdot\left\{\sqrt[4]{(2\pm\sqrt 3)^2}\right\},\quad |\pmb f|=46.
\ee
Here $\{\sqrt[4]{1}, \sqrt[4]{-17\pm 3\cdot 2 ^{5/2}}\}$ are the edge midpoints of a cube, for any $x$ in this set we have $ f(x)=1$. The poles $\sqrt[4]{(2\pm\sqrt 3)^2}$  of $f$ correspond to the vertices of the cube.  The zeros $\{0,\infty,\sqrt[4]{-1}\}$ of $f$ are the centers of the cube faces.

For the octahedron dual to the cube: the points  $\{\sqrt[4]{1}, \sqrt[4]{-17\pm 3\cdot 2 ^{5/2}}\}$ correspond to the edge midpoints, the poles of $f$ correspond to the centers of the faces, and the zeros of $f$ are the vertices.  A picture of the corresponding {\it dessin d'enfant} can be found e.g. in~\cite[Fig. 3]{Margot Zvonkin}.

In the proof of Theorem~\ref{TOT} below we show that to the Belyi function~\eqref{cube} there corresponds the constant
\be\label{CfOT}
C_f=\frac 9 4 \log 3 +\frac{119}{18}\log 2.
\ee
For the pullback of the divisor $\pmb \beta=  \beta_0\cdot 0+\beta_1\cdot 1+\beta_\infty \cdot \infty$ by $f$ we get
$$
\ba
f^*\pmb\beta=(4\beta_0+3)\cdot\left\{0,\infty,\sqrt[4]{-1}\right\}+& (2\beta_1+1)\cdot\left\{\sqrt[4]{1}, \sqrt[4]{-17\pm 3\cdot 2 ^{5/2}}\right\}
\\
&\qquad\qquad + (3\beta_\infty+2)\cdot\left\{\sqrt[4]{(2\pm\sqrt 3)^2}\right\}.
\ea
$$
\begin{figure}[h]
 \centering\includegraphics[scale=.3]{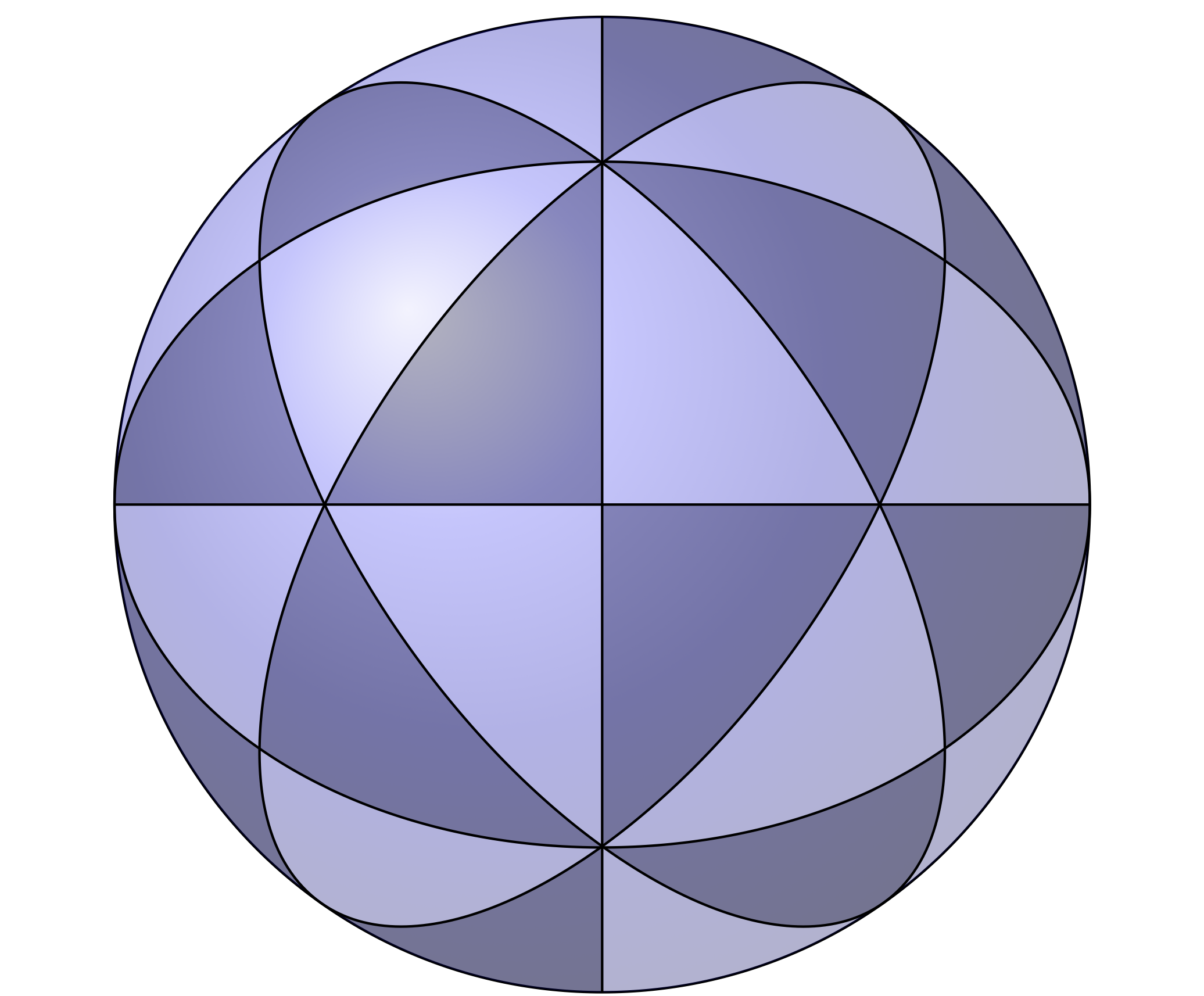}
\caption{Octahedral triangulation}\label{Oct Triang}
\end{figure}

\begin{theorem}[Octahedral triangulation]\label{TOT}  Let $m_{\pmb\beta}$ be the unit area Gaussian curvature $2\pi(|\pmb\beta|+1)$ metric of $S^2$-like double triangle, see Section~\ref{PrelimTriangulation}. For the determinant of Laplacian corresponding to the pullback metric $f^*m_{\pmb\beta}$ of area $24$, where $f$ is the  Belyi map~\eqref{cube}, we have
$$
\ba
\log & \frac { \det \Delta_{f^*\pmb\beta}} {24} =   24\Bigl( \log \det \Delta_{\pmb\beta}   +\mathcal C_{\pmb\beta}   -{\bf C}\Bigr)+ C_f
\\
&+ \frac {15} {4}\frac {\Psi(\beta_0,\beta_1,\beta_\infty)}{\beta_0+1} + 3\frac {\Psi(\beta_1,\beta_0,\beta_\infty)}{\beta_1+1} +\frac {32} 9 \frac {\Psi(\beta_\infty,\beta_1,\beta_0)}{\beta_\infty+1}
\\
    & - \left(4(2\beta_0+\beta_1+3)+\frac 1 {2(\beta_0+1)} +\frac 1 {\beta_1+1}\right)\log 2  -4\left(\beta_\infty+1+\frac 1 {9(\beta_\infty+1)}\right)\log 3 
    \\&
     -8\mathcal C\left(  3\beta_\infty+2 \right) -6\mathcal C\left(  4\beta_0+3\right)-12\mathcal C\left(  2\beta_1+1\right)
  +{\bf C},
\ea
$$
where $\mathcal C_{\pmb\beta} =\mathcal C(\beta_0)+\mathcal C(\beta_1)+\mathcal C(\beta_\infty)$  and $C_f$ is the same as in~\eqref{CfOT}.
  
Recall that  $\det \Delta_{\pmb\beta}$ stands for an explicit function~\eqref{CalcVar}, whose values are the determinants of the unit area $S^2$-like double triangles of Gaussian curvature $2\pi(|\pmb\beta|+2)$ tessellating the singular sphere $(\overline{\Bbb C}_x,f^*m_{\pmb\beta})$. The function $\beta\mapsto \mathcal C(\beta)$ is defined in~\eqref{cbeta}, the function $\Psi$ is the same as in~\eqref{eqn_Psi}, and $\bf C$ is the constant introduced in~\eqref{bfC}. 
\end{theorem}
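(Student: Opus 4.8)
The proof will follow the template already used for Theorem~\ref{Jan20} and Theorem~\ref{T_t}: the assertion is the specialization of the master formula~\eqref{S20STMNT} of Theorem~\ref{main} to the octahedral Belyi function~\eqref{cube}, so the only genuinely new work is the evaluation of the Belyi-dependent constant $C_f$ through Proposition~\ref{PBdet} and the formula~\eqref{C_f}. I would begin by differentiating $f(x)=-2^2\cdot3^3(x^4+1)^4x^4/(x^8-14x^4+1)^3$; the quotient rule yields a rational $f'$ whose numerator factors through $(x^4+1)^3$ together with the octic $x^8+20x^4-8$-type polynomial cutting out the twelve edge midpoints, and whose denominator is a power of $x^8-14x^4+1$. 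This factorization exhibits all critical points and confirms the ramification orders recorded in~\eqref{divisorF}: $\ord_k f=3$ at the six zeros $\{0,\infty,\sqrt[4]{-1}\}$, $\ord_k f=1$ at the twelve points with $f=1$, and $\ord_k f=2$ at the eight poles $\sqrt[4]{(2\pm\sqrt3)^2}$.

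Next I would read off, for each of the five orbits of ramification points, the first nonzero coefficient $c_k$ in the relevant local expansion from Proposition~\ref{PBdet}: for the face centers the leading coefficient of $f(x)-f(x_k)=f(x)$; for the edge midpoints the leading coefficient of $f(x)-1$; and for the vertices the leading Laurent coefficient of the order-three pole. The point $x=\infty$ is a zero rather than a pole, since $f(\infty)=0$, and is therefore handled by the third case of~\eqref{C_f}; I would cross-check its coefficient against the scaling constant $A_f$ via Remark~\ref{AvsCn}. I expect the $|c_k|$ to organize into clean values --- powers of $2$ and $3$ at $0$, $\infty$, and $\sqrt[4]{-1}$, and conjugate surds at the two $\pm$ branches $\sqrt[4]{-17\pm3\cdot2^{5/2}}$ and $\sqrt[4]{(2\pm\sqrt3)^2}$.

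The main obstacle is exactly this coefficient bookkeeping at the quartic surds. When the $\pm$ branches are combined under the logarithms in~\eqref{C_f}, the irrational parts must cancel: concretely, products such as $(2\sqrt3+3)(2\sqrt3-3)$ at the vertices and the analogous expressions at the nontrivial edge midpoints must rationalize to powers of $2$ and $3$. Only after this cancellation does the assembled sum collapse to the stated constant $C_f=\tfrac94\log3+\tfrac{119}{18}\log2$ of~\eqref{CfOT}; keeping track of the signs dictated by the four cases (pole versus non-pole, at infinity versus finite) is where errors are most likely to creep in, so I would verify the result orbit by orbit.

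Finally, with $C_f$ in hand, the theorem follows by substituting into~\eqref{S20STMNT}. The three color-orbits of preimages are summed using the identity~\eqref{OEQ} $\sum_{k:f(x_k)=j}(\ord_k f+1)=\deg f$, which turns the per-vertex contributions $\phi_{f(x_k)}/(\beta_{f(x_k)}+1)$ of the first sum into the three $\Psi$-terms with coefficients $\tfrac16\cdot6\cdot(4-\tfrac14)=\tfrac{15}4$, $\tfrac16\cdot12\cdot(2-\tfrac12)=3$, and $\tfrac16\cdot8\cdot(3-\tfrac13)=\tfrac{32}9$. The second sum in~\eqref{S20STMNT}, with $\log(\ord_k f+1)$ equal to $2\log2$, $\log2$, and $\log3$ on the three orbits, produces precisely the $\beta$-dependent $\log2$ and $\log3$ prefactors displayed in the statement, while $C_f$ supplies the pure constant; the $\mathcal C$-sum collapses to $-6\mathcal C(4\beta_0+3)-12\mathcal C(2\beta_1+1)-8\mathcal C(3\beta_\infty+2)$ with the matching $(\ord_k f+1)\mathcal C(\beta_{f(x_k)})$ terms absorbed into $24\,\mathcal C_{\pmb\beta}$. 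Invoking the asymptotics~\eqref{asympt_phi} for the $\phi_j$ completes the identification; no area rescaling is required, since the formula is already normalized by $\deg f=24$.
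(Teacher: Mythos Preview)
Your approach is correct, but it differs from what the paper actually does for this theorem. You follow the template of Theorems~\ref{Jan20} and~\ref{T_t}, computing $C_f$ via the Taylor/Laurent coefficients $c_k$ and formula~\eqref{C_f} of Proposition~\ref{PBdet}. The paper, however, switches methods here and computes $C_f$ through Proposition~\ref{B_C_f} and formula~\eqref{C_F}: it writes the potential of the equilateral pullback metric explicitly, reads off the asymptotic constants $\sum_{\ell:\,k\neq\ell<n}\frac{\ord_\ell f-2}{3}\log|x_k-x_\ell|$ at each orbit, evaluates the three lines of~\eqref{C_F} separately (obtaining $\log 2+\tfrac23\log3$, $\tfrac{17}{2}\log2+\tfrac83\log3$, and $-\tfrac{13}{12}\log(12\cdot2^{2/3})$), and sums them to $C_f=\tfrac94\log3+\tfrac{119}{18}\log2$.

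Both routes are legitimate since Proposition~\ref{B_C_f} establishes their equivalence. Your method requires tracking the $|c_k|$ at all $26$ marked points and verifying the surd cancellations you describe; the paper's method trades this for the factorisation of the pullback potential and a single evaluation of $A_f$. The paper's choice is likely motivated by the growing number of orbits and the messier arithmetic of the $c_k$ at the quartic surds --- your coefficient route works but is more error-prone here, which the paper implicitly acknowledges by switching. One small slip: the ``$x^8+20x^4-8$-type'' polynomial you mention is lifted from the tetrahedral derivative; for the octahedral map the edge-midpoint factor of $f-1$ is $(x^4-1)(x^8+34x^4+1)$, so the analogous piece of $f'$ will differ. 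Your final paragraph, identifying the $\Psi$-coefficients $\tfrac{15}{4},\,3,\,\tfrac{32}{9}$ and the $\mathcal C$- and $\log$-terms via~\eqref{S20STMNT}, is accurate and is exactly how both proofs close.
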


\begin{proof}  
Here we find $C_f$  by using Proposition~\ref{B_C_f}.    
Namely, we  first write the potential of the pullback of $m_{\pmb\beta}=c|z|^{-4/3}|z-1|^{-4/3}|dz|^2$ by $f$ in the form
$$
\bigl(f^*\phi\bigr)(x)=\frac 1 3 \log |x|+\frac 1 3 \log |x^4+1| -\frac 1 3 \log |x^4-1|-\frac 1 3 \log |(x^4+17)^2-9\cdot 2^5|+\log cA,
$$
cf.~\eqref{pb2}. It is easy to see that as $x\to x_k\in\{\sqrt[4]{-1}\}$ the metric potential $f^*\phi$ satisfies the asymptotics
$$
\ba
\bigl(f^*\phi\bigr)(x)& = \frac 1 3 \log |x-x_k| +\frac 1 3 \log\left |(x^4+1)'\restriction_{x=\sqrt[4]{-1}}\right | -\frac 1 3 \log |-2|
\\
& \qquad-\frac 1 3 \log |(-1+17)^2-9\cdot 2^5|+\log cA+o(1)
\\
& =\frac 1 3 \log |x-x_k| +\log cA-\frac 4 3 \log 2 +o(1).
\ea
$$
This together with~\eqref{a_s_m} implies that for any $k$, such that $x_k\in\{\sqrt[4]{-1}\}$, we have
$$
\sum_{\ell: k\neq \ell<n}  \frac {\ord_\ell f-2} 3 \log |x_k-x_\ell|=-\frac 4 3 \log 2.
$$

Similarly, we obtain
$$
\sum_{\ell: k\neq \ell<n}  \frac {\ord_\ell f-2} 3 \log |x_k-x_\ell|=
\left\{
\begin{array}{cc}
 0 ,&   x_k=0,  \\
 -\log2 
 -\frac 2 3 \log 3, &  x_k\in\{\sqrt[4]{1}\},   \\
  \frac 1 3 \log \frac {3\pm2\sqrt 2}{144}, &    x_k\in\left\{\sqrt[4]{-17\pm3\cdot 2^{5/2}}\right\}.
\end{array}
\right.
$$

As a consequence,
by using the expression~\eqref{divisorF} for the ramification divisor $\pmb f$, for the first line in~\eqref{C_F} we obtain
\be\label{eqy1}
\ba
\frac 1 {18} \sum_{ k:k<n}\sum_{\ell: k\neq \ell<n}\frac{(\ord_k f-2)(\ord_\ell f -2)}{\ord_k f+1} \log|x_k-x_\ell|= \log2 
 +\frac 2 3 \log 3.
\ea
\ee

Thanks to~\eqref{divisorF}, it is also easy to verify that for the second line in~\eqref{C_F} one has 
\be\label{eqy2}
\frac 1 6 \sum_{k\leq n}  \left( \frac{\ord_k f +1} 3 +\frac 3 { \ord_k f+1}  \right)\log (\ord_k f+1)=
 \frac{17} {2}\log 2+\frac 8 3   \log 3.
\ee

Finally, as the scaling constant $A$ satisfies~\eqref{Exp_A}, we get
$$
A=\frac{|f(x)|^{-2/3}|f(x)-1|^{-2/3}|f'(x)|}{|x|^{1/3}|x^4+1|^{1/3}|x^4-1|^{-1/3}|(x^4+17)^2-9\cdot 2^5|^{-1/3}}=12\cdot 2^{2/3}.
$$
This allows us to calculate the value of the last line  in~\eqref{C_F}: 
\be\label{eqy3}
\frac 1 {6} \left( n-2 - \sum_{k\leq n}  \frac 3 { \ord_k  f+1}    \right)\log A= -  \frac {13} {12} \log (12\cdot 2^{2/3}).
\ee
Adding the values~\eqref{eqy1},~\eqref{eqy2},~\eqref{eqy3} of the lines in~\eqref{C_F} together, we arrive at~\eqref{CfOT}.
\end{proof}

\begin{example}[Cube]{\rm
Here we find the spectral determinant of a cube of Gaussian curvature $4\beta+1$ with (eight) conical singularities of order $\beta$:  in Theorem~\ref{TOT} we substitute
$$
\pmb\beta=\left(-\frac 3 4\right)\cdot 0+  \left(-\frac 1 2\right)\cdot 1+ \frac {\beta-2} 3\cdot \infty.
$$
After rescaling, for the determinant of  a cube of (regularized) Gaussian curvature $4\beta+1$ with  eight conical singularities of order $\beta$ we obtain
\be\label{DetCube}
\ba
\log &\,  { \det \Delta^{4\pi}_{Cube}}  =   24\Bigl( \log \det \Delta_{\pmb\beta}   +\mathcal C_{\pmb\beta} \Bigr)+ \frac {5} 4 \log 3 -\frac{7}{18}\log 2+2\log \pi
\\
&+15  {\Psi\left(-\frac 3 4,-\frac 1 2, \frac {\beta-2} 3\right)} + 6 {\Psi\left(-\frac 1 2,-\frac 3 4, \frac {\beta-2} 3\right)} +\frac {32} {3(\beta+1)}  {\Psi\left(\ \frac {\beta-2} 3,-\frac 1 2,-\frac 3 4\right)}
\\
    &  -\frac 2 3\left( \beta+1+\frac 1 { \beta+1}\right) \log \frac {3\pi} 2 
     -8\mathcal C\left(  \beta \right)   -23{\bf C},
\ea
\ee
where the right hand side is an explicit function of $\beta$. A graph of this function is depicted in Fig.~\ref{Platonic} as a dashed line. 
As $\beta\to -1^+$, the determinant of Laplacian grows without any bound in accordance with the asymptotics
\be\label{IdealCube}
\log { \det \Delta^{4\pi}_{Cube}} =\left(-\frac 4 3 \log(\beta+1) +\frac 2 3 \log 3 -\frac 4 3 +16\zeta'_R(-1)    \right)\frac 1 {\beta+1} -4\log (\beta+1) + O(1) 
\ee
of the right hand side in~\eqref{DetCube}.
In the limit $\beta=-1$  we get a surface isometric to an ideal cube: a  surface of Gaussian curvature $-3$ with eight cusps, cf.~\cite{Judge,Judge2}.  The spectrum of the corresponding Laplacian is no longer discrete.

To the case of a Euclidean cube there corresponds the value  $\beta=-1/4$. The formula~\eqref{DetCube} for the determinant reduces to  
$$
\ba
\log   { \det \Delta^{4\pi}_{Cube}}|_{\beta=-1/4}  =& \frac {32} 3 \zeta_R'(-1) - \frac {37}{18}\log 2  + \frac {25}{12}\log 3
\\
& +   \frac {16} 3 \log \Gamma\left(\frac 2 3\right) - \frac {86} 9 \log \Gamma\left (\frac 3 4\right)  + \frac{19} 9 \log \pi.
\ea
$$
}
\end{example}

\begin{example}[Octahedron]{\rm  Now we can deduce yet another formula for the determinant of Laplacian on a regular octahedron of Gaussian curvature $3\beta+1$ with (six) conical singularities of order $\beta$.
In Theorem~\ref{TOT} we take
$$
\pmb\beta=\frac {\beta-3} 4 \cdot 0+  \left(-\frac 1 2\right)\cdot 1+  \left(-\frac 2 3\right)\cdot \infty.
$$
As a result, after an appropriate rescaling we obtain
$$
\ba
\log & \,{ \det \Delta^{4\pi}_{Octahedron}}  =   24\Bigl( \log \det \Delta_{\pmb\beta}   +\mathcal C_{\pmb\beta}  \Bigr)- \frac {13}{12} \log 3 +\frac{71}{18}\log 2
\\
&+ \frac {15}{\beta+1}\Psi\left(\frac {\beta-3} 4,-\frac 1 2,-\frac 2 3\right ) + 6{\Psi\left(-\frac 1 2,\frac {\beta-3} 4,-\frac 2 3\right)} +\frac {32} 3 \Psi\left(-\frac 2 3,-\frac 1 2,\frac {\beta-3} 4\right)
\\
    & - \frac 1 2 \left(\beta+1+\frac 1 {\beta+1} \right)    \log \frac {8\pi} 3
    -6\mathcal C\left( \beta\right) -23{\bf C}+\frac 5 3 \log \pi.
\ea
$$
}
\end{example}

\subsection{Determinant for icosahedral triangulation}

The icosahedral Belyi function is given by 
\be\label{BMid}
f(x)=1728 \frac{     x^5(x^{10}-11 x^5-1)^5}  {(x^{20}+228(x^{15}-x^{5})+494 x^{10}+1)^3},\quad \deg f =60. 
\ee
The ramification divisor of $\pmb f$ is
$$
\pmb f= 2\cdot  \left\{20 \text{ poles of } f\right\}  + 4\cdot\left\{12 \text{ zeros of } f\right\}+1\cdot\left\{30 \text{ zeros of } f-1\right\},\quad|\pmb f|=118.
$$
This defines tessellation of a standard round sphere with bicolored spherical $(2,3,5)$ double triangle, cf. Fig~\ref{Icos Triang Pic}.  The $20$ poles of $f$ are the coordinates of  the centers of the faces, the $30$ solutions to $f(x)=1$ are the edge midpoints, and the $12$ zeros of  $f$   ($x=\infty$ is also a zero of $f$) are the vertices of a regular icosahedron inscribed into the sphere. In terms of the  dodecahedron that is dual to the  icosahedron: The poles of $f$ are the coordinates of the $20$ vertices,   the $12$ zeros of  $f$ are the centers of its faces,   the $30$ solutions to the equation $f(x)=1$ correspond to the edge midpoints. A picture of the corresponding {\it dessin d'enfant} can be found e.g. in~\cite[Fig. 1]{Margot Zvonkin}.

For the icosahedral Belyi function evaluation of the right hand side in~\eqref{C_F} gives
\be\label{CfIT}
C_f=\frac{139} {15} \log 2 + \frac {63} {10}\log 3 + \frac{125}{36}\log 5.
\ee
For the pullback of the divisor $\pmb\beta$ by $f$ we have
$$
f^*\pmb\beta= (5\beta_0+4)\cdot\left\{12 \text{ zeros of } f\right\}+(2\beta_1+1)\cdot\left\{30 \text{ zeros of } f-1\right\}+(3\beta_\infty+2)\cdot  \left\{20 \text{ poles of } f\right\}. 
$$

\begin{theorem}[Icosahedral triangulation]\label{IcoTr}  Let $m_{\pmb\beta}$ be the unit area Gaussian curvature $2\pi(|\pmb\beta|+1)$ metric of $S^2$-like double triangle, see Section~\ref{PrelimTriangulation}. Then for the determinant of Laplacian corresponding to the area $60$ pullback metric $f^*m_{\pmb\beta}$, where $f$ is the icosahedral Belyi map~\eqref{BMid}, we have
$$
\ba
\log & \frac { \det \Delta_{f^*\pmb\beta}} {60} =   60 \Bigl( \log \det \Delta_{\pmb\beta}   +\mathcal C_{\pmb\beta}   -{\bf C}\Bigr)+ C_f
\\
&+\frac {48} 5 \frac {\Psi(\beta_0,\beta_1,\beta_\infty)}{\beta_0+1} +  \frac {15} 2 \frac {\Psi(\beta_1,\beta_0,\beta_\infty)}{\beta_1+1} + \frac {80} 9  \frac {\Psi(\beta_\infty ,\beta_1,\beta_0)}{\beta_\infty+1}
\\
    &   -2  \left(5\beta_0+5+\frac 1 {5\beta_0+5}\right)\log 5  -5  \left(2\beta_1+2+\frac 1 {2\beta_1+2}\right)\log 2 
    \\&
     -\frac {10} 3  \left(3\beta_\infty+3+\frac 1 {3\beta_\infty+3}\right)\log 3
    \\&
     -12\mathcal C\left(  5\beta_0+4\right)-30\mathcal C\left(  2\beta_1+1\right) -20\mathcal C\left(  3\beta_\infty+2 \right)
   +{\bf C},
\ea
$$
where $\mathcal C_{\pmb\beta} =\mathcal C(\beta_0)+\mathcal C(\beta_1)+\mathcal C(\beta_\infty)$.  
Recall that  $\det \Delta_{\pmb\beta}$ stands for an explicit function~\eqref{CalcVar}, whose values are the determinants of the unit area $S^2$-like double triangles of Gaussian curvature $2\pi(|\pmb\beta|+2)$. The function $\beta\mapsto \mathcal C(\beta)$ is defined in~\eqref{cbeta}, the function $\Psi$ is the same as in~\eqref{eqn_Psi}, and $\bf C$ is the constant introduced in~\eqref{bfC}. 
\end{theorem}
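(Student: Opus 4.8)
The plan is to read this theorem off as the specialization of Theorem~\ref{main} to the icosahedral Belyi map~\eqref{BMid}, so that essentially nothing needs to be proved beyond the value of the constant $C_f$. The ramification data are fixed by the factorization of~\eqref{BMid}: the $12$ zeros (vertices) have $\ord_k f=4$, the $30$ solutions of $f(x)=1$ (edge midpoints) have $\ord_k f=1$, and the $20$ poles (face centers) have $\ord_k f=2$; together with $\deg f=60$ and the orders $(f^*\pmb\beta)_k$ from~\eqref{orders*}, these produce every term of the displayed formula except $C_f$. For instance the coefficients $\tfrac{48}5,\tfrac{15}2,\tfrac{80}9$ in the second line are exactly $\tfrac16\cdot12\cdot(5-\tfrac15)$, $\tfrac16\cdot30\cdot(2-\tfrac12)$, $\tfrac16\cdot20\cdot(3-\tfrac13)$, and the uniformization coefficients $\phi_{f(x_k)}=\Psi$ from~\eqref{asympt_phi} enter precisely as in the preceding theorems. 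Thus the entire substance of the proof is to verify the value $C_f=\tfrac{139}{15}\log2+\tfrac{63}{10}\log3+\tfrac{125}{36}\log5$ claimed in~\eqref{CfIT}.

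For this I would follow the octahedral computation of Theorem~\ref{TOT} rather than the Laurent-coefficient formula~\eqref{C_f}, i.e.\ I would invoke Proposition~\ref{B_C_f} and evaluate the three lines of~\eqref{C_F}. The natural starting point is Klein's icosahedral syzygy: writing $\mathfrak f_{12}=x^{11}-11x^6-x$, $\mathfrak H_{20}=x^{20}+228(x^{15}-x^5)+494x^{10}+1$, and the degree-$30$ Jacobian form $\mathfrak T_{30}$ with $\mathfrak T_{30}^2=1728\,\mathfrak f_{12}^5-\mathfrak H_{20}^3$, one has $f=1728\,\mathfrak f_{12}^5/\mathfrak H_{20}^3$ and $f-1=\mathfrak T_{30}^2/\mathfrak H_{20}^3$, while $f'$ is a constant multiple of $\mathfrak f_{12}^4\,\mathfrak T_{30}/\mathfrak H_{20}^4$. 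Substituting these into the flat potential $f^*\phi=\log c-\tfrac23\log|f|-\tfrac23\log|f-1|+\log|f'|$ collapses all the $\mathfrak H_{20}$-terms and yields $f^*\phi=\log(cA_f)+\tfrac23\log|\mathfrak f_{12}|-\tfrac13\log|\mathfrak T_{30}|$, consistent with the exponents $\tfrac{\ord_k f-2}{3}$ in~\eqref{a_s_m}. The scaling constant $A_f$ of~\eqref{Exp_A} is then read off from the leading coefficients as an explicit product of powers of $2,3,5$, after which the second and third lines of~\eqref{C_F} are immediate.

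The main obstacle is the double sum forming the first line of~\eqref{C_F}. A crucial simplification is that the $20$ face centers have $\ord_k f-2=0$ and hence drop out of every cross term, leaving only the $12$ vertices (weight $+2$) and the $30$ edge midpoints (weight $-1$). As in Theorem~\ref{TOT}, I would extract the constant $S_k:=\tfrac13\sum_{\ell\neq k}(\ord_\ell f-2)\log|x_k-x_\ell|$ by matching the local asymptotics~\eqref{a_s_m} against the product form above: at a vertex (a root of $\mathfrak f_{12}$) this gives $S_k=\tfrac23\log|\mathfrak f_{12}'(x_k)|-\tfrac13\log|\mathfrak T_{30}(x_k)|$, and at an edge midpoint (a root of $\mathfrak T_{30}$) it gives $S_k=\tfrac23\log|\mathfrak f_{12}(x_k)|-\tfrac13\log|\mathfrak T_{30}'(x_k)|$. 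Summing $\tfrac16\sum_k\tfrac{\ord_k f-2}{\ord_k f+1}S_k$ over the two families reduces the first line of~\eqref{C_F} to the discriminants $\operatorname{disc}(\mathfrak f_{12})$, $\operatorname{disc}(\mathfrak T_{30})$ and the resultant $\operatorname{Res}(\mathfrak f_{12},\mathfrak T_{30})$ of Klein's forms, together with a separate contribution from the vertex at $x=\infty$. Evaluating these resultants and discriminants — and carefully tracking the normalizing constants of $\mathfrak f_{12}$, $\mathfrak H_{20}$, $\mathfrak T_{30}$ — is the only genuinely laborious step; once they are in hand, adding the three lines of~\eqref{C_F} delivers~\eqref{CfIT} and, combined with the first paragraph, completes the proof.
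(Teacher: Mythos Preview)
Your proposal is correct and follows essentially the same route as the paper: the paper's proof simply states that the formula is the specialization of Theorem~\ref{main} to the icosahedral Belyi map, and that $C_f$ is computed ``in exactly the same way as for the octahedral one in the proof of Theorem~\ref{TOT},'' with the details omitted. Your outline via Klein's forms $\mathfrak f_{12},\mathfrak H_{20},\mathfrak T_{30}$ and resultants/discriminants is in fact a more detailed roadmap for that omitted calculation than the paper itself provides.
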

\begin{proof} For this tessellation we find the value~\eqref{CfIT} of $C_f$ in exactly the same way as for the octahedral one in the proof of Theorem~\ref{TOT}. The calculations are a bit tedious but straightforward. We omit the details.   
\end{proof}

\begin{example}[Icosahedron]\label{EIcosahedron}
{\rm 
Here we find an explicit expression for the spectral determinant of a regular icosahedron of area $4\pi$ and Gaussian curvature $6\beta +1$ with $12$ conical points of order $\beta$. In Theorem~\ref{IcoTr} we take
\be\label{divIcos}
\pmb\beta=\frac{\beta-4}5 \cdot 0 +\left(-\frac 1 2\right)\cdot 1 +\left(-\frac 2 3 \right) \cdot \infty.
\ee
In particular, in the case $\beta=0$ all conical singularities disappear and we obtain a surface isometric to the standard round sphere $x_1^2+x_2^2+x_3^2=1$ in $\Bbb R^3$.

As a consequence of Theorem~\ref{IcoTr} and the standard rescaling property, for the divisor~\eqref{divIcos} we obtain
\be\label{DetIcos}
\ba
\log\,  &  { \det \Delta^{4\pi}_{Icosahedron}} =   60 \Bigl( \log \det \Delta_{\pmb\beta}   +\mathcal C_{\pmb\beta} \Bigr)+ \frac{19} {15} \log 2 - \frac {61} {30}\log 3   + \frac{65}{36}\log 5
\\
&+\frac {48} {\beta+1}  {\Psi\left( \frac{\beta-4}5,-\frac 1 2,-\frac 2 3\right)}+  15 {\Psi\left(-\frac 1 2,\frac{\beta-4}5,-\frac 2 3\right)} + \frac {80} 3  {\Psi\left(-\frac 2 3,-\frac 1 2,\frac{\beta-4}5\right)}
\\
    &   -  \left(\beta+1+\frac 1 {\beta+1}\right)\log \frac {5\pi}{3} -12\mathcal C\left(  \beta\right)   -59{\bf C}   +\frac 8 3 \log \pi,
\ea
\ee
where the right-hand side is an explicit function of $\beta$. A graph of this function is depicted in Fig.~\ref{Platonic} as a dash-dotted line. 

As $\beta\to -1^+$ the determinant increases without any bound in accordance with the asymptotics
\be\label{IdealIcosahedron}
\log  { \det \Delta^{4\pi}_{Icosahedron}} =\Bigl( -2\log(\beta+1)+\log 5 -2+24\zeta'_R(-1)  \Bigr)\frac 1 {\beta+1}-6\log(\beta+1)+O(1)
\ee
of the right-hand side in~\eqref{DetIcos}. In the limit $\beta=-1$ we obtain an ideal icosahedron, cf.~\cite{Judge,Judge2,KimWilkin}. The spectrum of the corresponding Laplacian is no longer discrete.

In the case $\beta=-1/6$ we obtain a flat regular icosahedron: 
the pullback of the flat metric  $m_{\pmb\beta}^S=|z|^{-4/3}|z-1|^{-1}|dz|^2$ by $f$ is the metric
$$
f^*m_{\pmb\beta}^S=  3\cdot 2^2\cdot 5^2   |     x(x^{10}-11 x^5-1)|^{-1/3}|dx|^2
$$
of a flat regular icosahedron.  The equality~\eqref{DetIcos} reduces to 
$$
\log  { \det \Delta^{4\pi}_{Icosahedron}} |_{\beta=-1/6}= \frac {96}{5} \zeta_R'(-1)+  \frac {18} 5  \log(\sqrt5-1) + \frac 6 5\log( \sqrt 5+1 )+\frac {23} 2 \log \pi
$$
$$
+\frac {214}{45} \log 2 -\frac {917}{60} \log 3+\frac {251}{36}\log 5    +\frac{72} 5  \log \Gamma\left(\frac 4 5\right)-\frac {211}{5} \log \Gamma\left(\frac 2 3\right)+\frac {24}{5} \log \Gamma\left(\frac 3 5 \right).
$$

Let us also note that for the Klein's tessellation by $(2,3,7)$-triangles, one should take $\beta=-2/7$ in~\eqref{divIcos} and~\eqref{DetIcos}.  This is related to Klein's quartic~\cite{SSW,KW,ShVo}: the genus $3$ surface with the highest possible number of authomorphisms~\cite{harts}, it is also the lowest genus Hurwitz surface.  It is an open longstanding problem to find the exact value of the spectral determinant of Klein's quartic; for a numerical study see~\cite{StrUs}. Klein's quartic is also conjectured to be a stationary point of the spectral determinant, cf.~Theorem~\ref{ExtDet}.

}
\end{example}

\begin{example}[Dodecahedron]{\rm Here we find an explicit expression for the spectral determinant of a regular dodecahedron of area $4\pi$ and Gaussian curvature $K= 10\beta+1$ with twenty conical singularities of order $\beta$. With this aim in mind, in Theorem~\ref{IcoTr}  we take
\be\label{divDod}
\pmb\beta= \left(-\frac 4 {5}\right)\cdot 0 +\left(-\frac 1 2\right)\cdot 1 +\frac {\beta-2} 3 \cdot \infty.
\ee
Then after an appropriate rescaling we obtain
\be\label{DODEXPL}
\ba
\log\, & { \det \Delta^{4\pi}_{Dodecahedron}}  =   60 \Bigl( \log \det \Delta_{\pmb\beta}   +\mathcal C_{\pmb\beta}  \Bigr)+ \frac{19} {15} \log 2 + \frac {33} {10}\log 3 - \frac{127}{36}\log 5
\\
&+ {48}  {\Psi\left(-\frac 4 {5},-\frac 1 2,\frac {\beta-2} 3\right)} +  {15}  {\Psi\left(-\frac 1 2,-\frac 4 {5},\frac {\beta-2} 3\right)} + \frac {80} {3(\beta+1)}   {\Psi\left(\frac {\beta-2} 3,-\frac 1 2,-\frac 4 {5}\right)}
\\
    &  
     -\frac {5} 3  \left(\beta+1+\frac 1 {\beta+1}\right)\log \frac {3\pi}{5} 
    -20\mathcal C\left( \beta \right)-59{\bf C}     +4\log  {\pi},
\ea
\ee
where the right-hand side is an explicit function of $\beta$. A graph of this function is depicted in Fig.~\ref{Platonic} as a long-dashed line. 

As $\beta\to -1^+$,  the determinant increases without any bound in accordance with the asymptotics 
\be\label{IdealDodecahedron}
\ba
\log { \det \Delta^{4\pi}_{Dodecahedron}}=\Bigl(  - \frac {10} 3 (\log (\beta+1)  - \log 3+1) + & 40\zeta'_R(-1)  \Bigr)\frac 1 {\beta+1}
\\
& -10 \log(\beta+1)+O(1)
\ea
\ee
of the right-hand side in~\eqref{DODEXPL}.  In the limit $\beta=-1$ the surface $(\overline{\Bbb C}_x,f^*m_{\pmb\beta})$ becomes isometric to an ideal dodecahedron~\cite{Judge,Judge2}.

In the case of a Euclidean dodecahedron, the equality~\eqref{DODEXPL} reduces to 
$$
\ba
\log\, { \det \Delta^{4\pi}_{Dodecahedron}}|_{\beta=-1/10}=\frac{83}{180} \log 2 - \frac 7{135} \log 3 - \frac {19}{72} \log5 + \frac {19}{108}\log( \sqrt 5-1 ) 
\\
 -\frac {19}{27}\log \Gamma\left(\frac 7{10}\right) -\frac{19} {27}\log\Gamma\left(\frac 4 5 \right)+\frac {19}{27} \log \pi+\frac 1 6-  4\zeta_R'(-1)-20\mathcal C\left(-\frac 1{10}\right),
 \ea
$$
where $\mathcal C\left(-\frac 1{10}\right)$ can be expressed in terms of Riemann zeta and gamma functions~\cite{KalvinJFA}.
}
\end{example}

We end this section with a remark that the spectral determinants of the surfaces of Archimedean solids can be calculated in the same way as above thanks to the Belyi maps found in~\cite{Margot Zvonkin}. Let us also notice that it would be interesting to study the cone to cusp degeneration~\cite{Judge,Judge2,KimWilkin} and obtain, as a consequence of our results, some explicit formulae for the relative spectral determinant~\cite{Mueller} of hyperbolic surfaces with cusps. We hope to address this elsewhere.

\vspace{1cm}

\noindent{ \bf Acknowledgements} I would like to thank Andrea Malchiodi  for correspondence, Iosif Polterovich for discussions of spectral invariants, and  Bin Xu for correspondence and drawing my attention to the reference~\cite{KimWilkin}.

\end{document}